\title{Sparse Signal Recovery from Phaseless Measurements via Hard Thresholding Pursuit}
\author[a]{Jian-Feng Cai}
\author[b]{Jingzhi Li}
\author[c]{Xiliang Lu}
\author[a]{Juntao You\thanks{Corresponding author: jyouab@connect.ust.hk}}
\affil[a]{Department of Mathematics\\ The Hong Kong University of Science and Technology\\ Clear Water Bay, Kowloon, Hong Kong SAR, China.}
\affil[b]{Department of Mathematics, International Center of Mathematics, and Guangdong Provincial
Key Laboratory for Computational Science and Material Design\\ Southern University of Science and Technology\\ Shenzhen 518005, China.}
\affil[c]{School of Mathematics and Statistics, and Hubei Key Laboratory of Computational Science\\ Wuhan University\\ Wuhan 430072, China.}
\def\lV{\left\lVert}
\def\rV{\right\lVert}
\def\lv{\left\lvert}
\def\rv{\right\lvert}
\def\l{\langle}
\def\r{\rangle}
\def\H{\mathcal H}
\def\G{\mathcal G}
\def\I{\mathcal I}
\def\sS{\mathcal S}
\def\T{\mathcal T}
\newtheorem{theorem}{Theorem}
\newtheorem{lemma}{Lemma}
\newtheorem{corollary}{Corollary}
\newtheorem{proposition}{Proposition}
\begin{document}

\maketitle
\begin{abstract}
In this paper, we consider the sparse phase retrieval problem, recovering an $s$-sparse signal $\bm{x}^{\natural}\in\mathbb{R}^n$ from $m$ phaseless samples $y_i=|\langle\bm{x}^{\natural},\bm{a}_i\rangle|$ for $i=1,\ldots,m$. Existing sparse phase retrieval algorithms are usually first-order and hence converge at most linearly. Inspired by the hard thresholding pursuit (HTP) algorithm in compressed sensing, we propose an efficient second-order algorithm for sparse phase retrieval. Our proposed algorithm is theoretically guaranteed to give an exact sparse signal recovery in finite (in particular, at most $O(\log m + \log(\|\bm{x}^{\natural}\|_2/|x_{\min}^{\natural}|))$ steps, when $\{\bm{a}_i\}_{i=1}^{m}$ are i.i.d. standard Gaussian random vector with $m\sim O(s\log(n/s))$ and the initialization is in a neighbourhood of the underlying sparse signal. Together with a spectral initialization, our algorithm is guaranteed to have an exact recovery from $O(s^2\log n)$ samples. Since the computational cost per iteration of our proposed algorithm is the same order as popular first-order algorithms, our algorithm is extremely efficient. Experimental results show that our algorithm can be several times faster than existing sparse phase retrieval algorithms.
\end{abstract}

\section{Introduction}
\subsection{Phase retrieval problem}
The phase retrieval problem is to recover an $n$-dimensional signal $\bm{x}^{\natural}$ from a system of phaseless equations
\begin{align}\label{problem:PR}
y_i=| \langle \bm{a}_i,\bm{x}^{\natural} \rangle | , \quad i=1,2,\cdots,m,
\end{align}
where $\bm{x}^{\natural}$ is the unknown vector to be recovered, $\bm{a}_i$ for $i=1,\ldots,m$ are given sensing vectors,  $y_i\in\mathbb{R}_+$ for $i=1,\ldots,m$ are observed modulus data, and $m$ is the number of measurements (or the sample size). This problem arises in many fields such as X-ray crystallography \cite{harrison1993phase}, optics \cite{walther1963question}, microscopy \cite{miao2008extending}, and others \cite{fienup1982phase}.

The classical approaches for phase retrieval were mostly based on alternating projections, e.g., the work of Gerchberg and Saxton \cite{gerchberg1972practical} and Fienup \cite{fienup1982phase}, which usually work very well empirically but lack provable guarantees in the primary literatures. Recently, lots of attentions has been paid to constructing efficient algorithms with theoretical guarantees when given certain classes of sampling vectors. In these approaches, one of the main targets is to achieve optimal sampling complexity for, e.g, reducing the cost of sampling and computation. They are categorized into convex and nonconvex optimization based approaches. Typical convex approaches such as phaselift \cite{candes2015phase} transfer the phase retrieval problem into a semi-definite programming(SDP), which lifts the unknown $n$-dimensional signal to an $n\times n$ matrix and thus computationally expensive. To overcome this, some other convex approaches such as Phasemax \cite{goldstein2016phasemax} and others \cite{hand2018elementary,bahmani2017flexible} solve convex optimizations with $n$ unknowns only. However, these convex formulations depend highly on the so-called anchor vectors that approximate the unknown signal, and the sampling complexity might be unnecessarily large if the anchor vector is not good enough. Meanwhile, nonconvex optimization based approaches were proposed and studied in the past years. Examples include alternating minimization \cite{netrapalli2013phase} (or Fienup methods), Wirtinger flow \cite{candes2015phase1},  Kaczmarz \cite{tan2019phase,wei2015solving}, Riemannian optimization \cite{cai2018solving}, and Gauss-Newton \cite{gao2016gauss,ma2018globally}. To prove the guarantee, these algorithms normally require a good initialization close enough to the ground truth, which is achieved by spectral initializations. Nevertheless, experimental results suggest that the designed initialization is not necessary --- a random initialization usually lead to the correct phase retrieval. To explain this, either the algorithms with random initialization is studied \cite{waldspurger2018phase,chen2019gradient,tan2019online}, or the global geometric landscape of nonconvex objective functions are examined \cite{8918236,sun2018geometric}, showing that there is actually no spurious local minima.

\subsection{Sparse phase retrieval}
All the aforementioned provable algorithms need a sampling complexity $m\sim O(n\log^a n)$ with $a\geq 0$. This sampling complexity is (nearly) optimal, since the phase retrieval problem is solvable only when $m\geq 2n-1$ and $m\geq 4n-4$ for real and complex signals respectively \cite{balan2006signal,conca2015algebraic}. Nevertheless, there is still a demand to further reduce the sampling complexity to save the cost of sampling. We have to exploit the structure of the underlying signal. In many applications, especially those related to signal processing and imaging, the true signal $\bm{x}^{\natural}$ is known to be sparse or approximately sparse in a transform domain \cite{mallat1999wavelet}. Have this priori knowledge in mind, it is possible to recover the signal using only a small number (possibly sublinear in $n$) of phaseless samples.

For simplicity, we assume that $\bm{x}^{\natural}$ is sparse with sparsity at most $s$, i.e., $\|\bm{x}^{\natural}\|_0\leq s$, where $\|\cdot\|_0$ stands for the number of nonzero entries. With this sparsity constraint, the phase retrieval problem \eqref{problem:PR} can be reformulated as: find $\bm{x}^{\natural}$ such that
\begin{align}\label{problem:sPRori}
y_i=| \langle \bm{a}_i,\bm{x}^{\natural} \rangle |,  \quad i=1,2,\cdots,m, \qquad \text{subject to} \ \lVert \bm{x}^{\natural}\lVert_0\le s.
\end{align}
The problem \eqref{problem:sPRori} is referred to a sparse phase retrieval problem. It has been proved that the sample size $m=2s$ is necessary and sufficient to determine a unique solution for the problem \eqref{problem:sPRori} with generic  measurements in the real case \cite{wang2014phase}. Thus it opens the possibility for successful sparse phase retrieval using very few samples.

Though the sparse phase retrieval problem \eqref{problem:sPRori} is NP-hard in general, there are many available algorithms that are guaranteed to find $\bm{x}^{\natural}$ with overwhelming probability under certain class of random measurements. Examples of such algorithms are $\ell_{1}$-regularized PhaseLift method \cite{li2013sparse}, sparse AltMin \cite{netrapalli2013phase}, thresholding/projected Wirtinger flow \cite{cai2016optimal,soltanolkotabi2019structured}, SPARTA \cite{wang2016sparse}, CoPRAM \cite{jagatap2019sample}, and a two-stage strategy introduced in \cite{iwen2017robust}. All these approaches except for \cite{iwen2017robust}
\footnote{A two-stage sampling scheme is proposed in \cite{iwen2017robust}, where the first stage for sparse compressed sensing and the second stage for phase retrieval. It needs $O(s\log(n/s))$ samples, but the sampling scheme is complicated.}
are analyzed under Gaussian random measurements, showing $O(s^{2}\mathrm{log}\, n)$ random Gaussian measurements are sufficient to achieve a successful sparse phase retrieval. Though not optimal, this sampling complexity is much smaller than that in the general phase retrieval. For convex approaches, it has been shown in \cite{li2013sparse} that $O(s^2\log n)$ random Gaussian samples is necessary. For nonconvex approaches, the algorithms are ususally divided into two stages, namely the initialization stage and the refinement stage. In the initialization stage, a spectral initialization is performed, and it requires $O(s^2\log n)$ Gaussian random samples to achieve an estimation sufficiently close to the ground truth. In the refinement stage, the initial estimation is refined by different algorithms, most of which are able to converge to the ground truth linearly using $O(s\log (n/s))$ Gaussian random samples. Thus the sample complexity in total is dominated by the initialization stage.

\subsection{Our Contributions}
In this paper, we propose a simple yet efficient nonconvex algorithm with provable recovery guarantees for sparse phase retrieval. Similar to most of the existing nonconvex algorithms, our proposed algorithm is divided into two stages, and the initialization stage rely on a spectral initialization also. So, we cannot reduce the sampling complexity to optimal as well. Instead, we focus on the improvement of the computational efficiency in the refinement stage, using $O(s\log(n/s))$ random Gaussian samples. Different to existing algorithms that usually converges linearly, our proposed algorithm is proven to have the exact recovery of the sparse signal in at most $O(\log m + \log(\|\bm{x}^{\natural}\|_2/|x_{\min}^{\natural}|))$ steps, while it has almost the same computational cost per step as others. Therefore, our algorithm is much more efficient than existing algorithms. Experimental results confirm this, showing that our algorithm gives a very accurate recovery in very few iterations, and it gains several times acceleration over existing algorithms.

Our proposed algorithm is based on the hard thresholding pursuit (HTP) for compressed sensing introduced in \cite{foucart2011hard}. Building on the projected gradient descent (or iterative hard thresholding (IHT) \cite{blumensath2008iterative}), the idea of HTP is to project the current guess into the space that best match the measurements in each iteration. With the help of the restricted isometry property (RIP) \cite{candes2008restricted}, HTP for compressed sensing is proved to have a robust sparse signal recovery  in finite steps starting from any initial guess. Our proposed algorithm is an adoption of HTP from linear measurements to phaseless measurements. Giving a current iteration, we first estimate the phase of the phaseless measurements, and then one step of HTP iteration is applied. Our algorithm has almost the same computational cost as compressed sensing HTP, while preserving the convergence in finite steps (in particular, in $O(\log m + \log(\|\bm{x}^{\natural}\|_2/|x_{\min}^{\natural}|))$ steps). Since the HTP algorithm is a second order Newton's method (see e.g. \cite{huang2021unified,zhou2019global}), our algorithm can be viewed as a Newton's method for sparse phase retrieval.

\subsection{Notations and outline}
 For any $\bm{a}, \bm{b}\in \mathbb{R}^{n}$, we denote $\bm{a}\odot \bm{b}$ the entrywise product of $\bm{a}$ and $\bm{b}$, i.e., $\bm{a}\odot \bm{b}=[a_1b_1,~a_2b_2,~\cdots,~a_nb_n]^T$. For $\bm{x}\in \mathbb{R}^{n}$, $\mathrm{sgn}{\left(\bm{x}\right)}\in\mathbb{R}^{n}$ is defined by $\left[\mathrm{sgn}{\left(\bm{x}\right)}\right]_i=1$ if $x_i>0$, $\left[\mathrm{sgn}{\left(\bm{x}\right)}\right]_i=0$ if $x_i=0$, and
 $\left[\mathrm{sgn}{\left(\bm{x}\right)}\right]_i=-1$ if $x_i<0$. $\lVert \bm{x}\lVert_0$ is the number of nonzero entries of $\bm{x}\in\mathbb{R}^n$, and $\lVert \bm{x}\lVert_2$ is the standard $2$-norm, i.e. $\lVert \bm{x}\lVert_2=\left(\sum_{i=i}^{n}x_i^2\right)^{1/2}$. For a matrix $\bm{A}\in \mathbb{R}^{m\times n}$, $\bm{A}^{T}$ is its transpose, and $\lV\bm{A}\rV_2$ denotes its spectral norm. For an index set $\sS\subseteq\left\{1,2,\cdots,n\right\}$, $\bm{A}_{\sS}$ (or $[\bm{A}]_{\sS}$ sometimes) stands for the submatrix of a matrix $\bm{A}\in\mathbb{R}^{m\times n}$ obtained by keeping only the columns indexed by $\sS$, and $\bm{x}_{\sS}$ (or $[\bm{x}]_{\sS}$ sometimes) denotes the vector obtained by keeping only the components of a vector $\bm{x}\in\mathbb{R}^n$ indexed by $\sS$. The hard thresholding operator $\H_s~:~\mathbb{R}^{n}\to\mathbb{R}^n$ keeps the largest $s$ components in magnitude of a vector in $\mathbb{R}^n$ and sets the other ones to zero. For $a\in \mathbb{R}_{+}$, $\log a$ in this paper represents the logarithm of $a$ to the base $e$. For $\bm{x},\bm{x}^{\natural}\in \mathbb{R}^n$, the distance between $\bm{x}$ and $\bm{x}^{\natural}$ is defined as
\begin{equation}\label{eq:dist}
\mathrm{dist}\left(\bm{x},\bm{x}^{\natural}\right)=\min\left\{\lV\bm{x}-\bm{x}^{\natural} \rV_2,\lV\bm{x}+\bm{x}^{\natural} \rV_2 \right\}.
\end{equation}
Also, in the paper, $x_{\min}^{\natural}$ means the smallest nonzero entry in magnitude of $\bm{x}^{\natural}$.

The rest of papers are organized as follows. In Section~\ref{section:algorithm} and \ref{section:theory}, we introduce the details of the proposed algorithm and our main theoretical results, respectively. Numerical experiments to illustrate the performance of the algorithm are given in Section \ref{section:experiments}. The proofs are given in Section \ref{section:proofs}, and we conclude the paper in Section \ref{section:conclusion}.

\section{Algorithms}\label{section:algorithm}
In this section, we describe our proposed algorithm in detail. Similar to most of the existing non-convex sparse phase retrieval algorithms, our algorithms consists of two stages, namely, the initialization stage and the iterative refinement stage. Since the initialization stage can be done by an off-the-shelf algorithm such as spectral initialization, we will focus on the iterative refinement stage. We first give some related algorithms, especially iterative hard thresholding algorithms, in Section \ref{subsection:iht}, and then our proposed algorithm is presented in Section \ref{subsection:htp}.

To simplify the notations, we denote the sampling matrix and the observations by
\begin{equation}\label{Aandy}
\bm{A}:=\frac{1}{\sqrt{m}} [\bm{a}_1~\bm{a}_2~\ldots~\bm{a}_m]^T \in \mathbb{R} ^{m\times n},\qquad
\bm{y} := \frac{1}{\sqrt{m}}[y_1~y_2~\ldots~y_m]^T
\end{equation}
respectively, where $\bm{a}_i\in\mathbb{R}^n$ and $y_i\in\mathbb{R}_+$ for $i=1,\ldots,m$ are from \eqref{problem:PR} and \eqref{problem:sPRori}. Thus, the sparse phase retrieval problem \eqref{problem:sPRori} can be rewritten as to find $\bm{x}^{\natural}$ satisfying
 \begin{align}\label{problem:sPR}
 \bm{y}=\lvert \bm{A}\bm{x}^{\natural} \lvert, \quad \text{subject to} \ \lVert \bm{x}^{\natural}\lVert_0 \le s.
 \end{align}
There are several possible ways to solve \eqref{problem:sPR} by reformulating it into constrained minimizations with different objective functions.

\subsection{Iterative Hard Thresholding Algorithms}\label{subsection:iht}

One natural way to solve \eqref{problem:sPR} is to consider a straightforward least squares fitting to the amplitude equations in \eqref{problem:sPR} subject to the sparsity constraint, and we solve
\begin{align}\label{problem:leastsq}
\mathop{\mathrm{minimize}}\limits_{\lV\bm{x}\rV_0\le s}~f(\bm{x}),\quad\mbox{where}\quad f(\bm{x})=\frac{1}{2}\big\|\bm{y}-|\bm{A}\bm{x}| \big\|_{2}^{2}.
\end{align}
Though the objective function $f$ is non-smooth, a generalized gradient is available (see e.g., \cite{zhang2017nonconvex}), which is given as
$$\nabla f\left(\bm{x}\right)=\bm{A}^T\left(\bm{A}\bm{x}-\bm{y}\odot \mathrm{sgn}\left(\bm{A}\bm{x}\right)\right).$$
Furthermore, the projection onto the feasible set $\{\bm{x}:\|\bm{x}\|_0\leq s\}$ can be done efficiently by $\mathcal{H}_s$, though the feasible set is non-convex. Altogether, one may apply a projected gradient descent to solve \eqref{problem:leastsq}, yielding
\begin{align}\label{iter:iht}
\bm{x}_{k+1}=\H_s\Big(\bm{x}_k+\mu\bm{A}^T\left(\bm{y}\odot \mathrm{sgn}\left(\bm{A}\bm{x}_k\right)-\bm{A}\bm{x}_k\right)\Big).
\end{align}
This algorithm is an iterative hard thresholding (IHT) algorithm, since the hard thresholding operator $\mathcal{H}_s$ is applied in each iteration. This algorithm is analyzed in \cite{soltanolkotabi2019structured} under a general framework, which proved that \eqref{iter:iht} converges linearly to $\pm\bm{x}^{\natural}$ with high probability when it is initialized in a small neighbourhood of $\pm\bm{x}^{\natural}$ and $O(s\log(n/s))$ Gaussian random measurements vectors are used. By ruling out some outlier phaseless equations from the least squares fitting at each iteration according to some truncation rule, the algorithm \eqref{iter:iht} becomes the SPARTA algorithm \cite{wang2016sparse} with the sampling complexity $O(s\log (n/s))$ for sparse phase retrieval provided a good initialization. Without $\mathcal{H}_s$, the algorithm \eqref{iter:iht} and its variants are algorithms for the standard phase retrieval \eqref{problem:PR}, including the truncated amplitude flow (TAF) algorithm \cite{wang2017solving}, the reshaped Wirtinger flow (RWF) \cite{zhang2017nonconvex}, both of which are guaranteed to have an exact phase retrieval starting from a good initialization when $O(n)$ Gaussian random measurements are used.

Alternatively, one may square both sides of the equation in \eqref{problem:sPR} to obtain $\bm{y}^2=|\bm{A}\bm{x}^{\natural}|^2$, where the square of a vector is componentwise. The resulting equation is known as the intensity equation in phase retrieval. Then, we can solve \eqref{problem:sPR} by minimizing the least square error of the intensity equation subject to the sparsity constraint, which leads to solving the following constrained minimization
\begin{equation}\label{eq:conslsint}
\mathop{\mathrm{minimize}}\limits_{\lV\bm{x}\rV_0\le s}~f_I(\bm{x}),\quad\mbox{where}\quad f_I(\bm{x})=\frac{1}{2}\big\|\bm{y}^2-|\bm{A}\bm{x}|^2\big\|_{2}^{2}.
\end{equation}
The advantage of fitting the intensity equation is that the objective function $f_I$ is both smooth and local strongly convex (see \cite{ma2018implicit}). Together with the fact that the projection onto the $s$-sparse set can be easily done by $\mathcal{H}_s$, one can apply the projected gradient descent to solve \eqref{eq:conslsint} and obtain
\begin{equation}\label{eq:pwf}
\bm{x}_{k+1}=\H_s\left(\bm{x}_{k}-\mu\nabla f_I\left(\bm{x}_k\right)\right).
\end{equation}
This is again an iterative hard thresholding algorithm. Since the gradient should be taken in the Wirtinger derivative in the complex signal case, the algorithm \eqref{eq:pwf} is more widely known as projected Wirtinger flow \cite{cai2016optimal,soltanolkotabi2019structured}. It is proved that \eqref{eq:pwf} gives an exact sparse phase retrieval in a neighbourhood of $\pm\bm{x}^{\natural}$ when sampled by $O(s\log(n/s))$ random Gaussian measurements. When there is no $\mathcal{H}_s$, \eqref{eq:pwf} and a truncation variant are consistent with Wirtinger flow \cite{candes2015phase1} and truncated Wirtinger flow \cite{chen2015solving} algorithms respectively for the standard phase retrieval.

\subsection{The Proposed Algorithm}\label{subsection:htp}

Comparing the two formulations \eqref{problem:leastsq} and \eqref{eq:conslsint}, experimental results \cite{zhang2017nonconvex,wang2017solving,soltanolkotabi2019structured} suggest that algorithms based on the amplitude equation fitting \eqref{problem:leastsq} are usually more efficient than those on the intensity equation \eqref{eq:conslsint}. Following this, we solve \eqref{problem:leastsq} as well. Our algorithm is motivated by the IHT algorithm \eqref{iter:iht} and the hard thresholding pursuit (HTP) algorithm \cite{foucart2011hard} for compressed sensing.

Let $\bm{x}_k\in\mathbb{R}^n$ be the approximation of $\bm{x}^{\natural}$ at step $k$. We observe that one iteration of \eqref{iter:iht} is just one step of projected gradient descent algorithm (a.k.a. the IHT algorithm \cite{blumensath2008iterative}) applied to the following constrained least squares problem
\begin{equation}\label{min:ls_for_cs}
\mathop{\mathrm{minimize}}\limits_{\|\bm{x}\|_0\leq s}~\frac{1}{2}\|\bm{A}\bm{x}-\bm{y}\odot \mathrm{sgn}\left(\bm{A}\bm{x}_k\right)\lVert_{2}^{2}.
\end{equation}
This formulation is exactly used in compressed sensing to recover an $s$-sparse signal $\bm{x}$ from its linear measurements $\bm{A}\bm{x}=\bm{y}\odot \mathrm{sgn}\left(\bm{A}\bm{x}_k\right)$. Thus, \eqref{iter:iht} can be interpreted as: given $\bm{x}_k$, we first guess the sign of the phaseless measurements $\bm{y}$ by $\mathrm{sgn}\left(\bm{A}\bm{x}_k\right)$, and then we solve the resulting compressed sensing problem \eqref{min:ls_for_cs} by one step of IHT \cite{blumensath2008iterative}. Therefore, to improve the efficiency of \eqref{iter:iht}, we may replace IHT by more efficient algorithms in compressed sensing for solving \eqref{min:ls_for_cs}.

To this end, we use one step of hard thresholding pursuit (HTP) \cite{foucart2011hard} to solve \eqref{min:ls_for_cs}. Given $\bm{x}_k$, there are two sub-steps in HTP. In the first sub-step, HTP estimates the support of the sparse signal by the support of the output of IHT, i.e.,
$$
\sS_{k+1}=\mathrm{supp}\Big(\H_s\left(\bm{x}_{k}+\mu \bm{A}^T\left( \bm{y}\odot \mathrm{sgn}\left(\bm{A}\bm{x}_k\right)-\bm{A}\bm{x}_{k}\right)\right)\Big).
$$
The main computation is a matrix-vector product, and it costs $O(mn)$ operations. In the second sub-step, instead of applying a gradient-type refinement, HTP then solves the least squares in \eqref{min:ls_for_cs} exactly by restricting the support of the unknown on $\sS_{k+1}$, i.e.,
\begin{align}\label{htp:subproblem}
\bm{x}_{k+1}=\mathop{\arg\, \min}\limits_{\mathrm{supp}\left(\bm{x}\right)\subseteq \sS_{k+1}}\frac{1}{2}\lVert \bm{A}\bm{x}-\bm{y}\odot \mathrm{sgn}\left(\bm{A}\bm{x}_k\right)\lVert_{2}^{2}.
\end{align}
This is a standard least squares problem with the coefficient matrix of size $s\times m$, which can be done efficiently in $O(s^2m)$ operations by, e.g., solving the normal equation
\begin{equation}\label{eq:optxk+1}
\bm{A}^{T}_{\sS_{k+1}}\bm{A}_{\sS_{k+1}}[\bm{x}_{k+1}]_{\sS_{k+1}}=\bm{A}^{T}_{\sS_{k+1}}\big(\bm{y}\odot\mathrm{sgn}\left(\bm{A}\bm{x}_k\right)\big),\quad [\bm{x}_{k+1}]_{\sS_{k+1}^c}=\bm{0}.
\end{equation}
Altogether, we obtain the iteration in our proposed algorithm, called HTP for sparse phase retrieval, depicted in Algorithm \ref{alg:htp}. The total computational cost is $O(mn+s^2m)$ per iteration. In case of $s\lesssim \sqrt{n}$, the cost is the same order as $O(mn)$, and thus one iteration of Algorithm \ref{alg:htp} has almost the same computational cost as that of \eqref{iter:iht} and many other popular sparse phase retrieval algorithms.

\begin{algorithm}[hbt!]
   \caption{Hard Thresholding Pursuit (HTP) for Sparse Phase Retrieval}\label{alg:htp}
   \begin{algorithmic}[1]
     \STATE Input: Data $\left\{\bm{a}_i,y_i\right\}_{i=1}^{m}$, step size $\mu>0$ (e.g., $\mu=0.95$).
     \STATE Initialization: Let $\bm{x}_{0}$ be the initial guess produced by, e.g., the spectral initialization given by (Init-1) and (Init-2) in Section~\ref{subsection:global}.
     \STATE $k=0$
     \WHILE{the stopping criteria is not met}
     \STATE $\bm{z}_{k+1}=\bm{A}\bm{x}_k$
     \STATE $\bm{y}_{k+1}=\bm{y} \odot \mathrm{sgn}{\left(\bm{z}_{k+1}\right)}$
     \STATE $\sS_{k+1}=\mathrm{supp}\Big(\H_s\left(\bm{x}_{k}+\mu \bm{A}^T\left( \bm{y}_{k+1}-\bm{z}_{k+1}\right)\right)\Big)$
     \STATE $\bm{x}_{k+1}=\mathop{\arg\, \min}\limits_{\mathrm{supp}\left(\bm{x}\right)\subseteq \sS_{k+1}}\frac{1}{2}\lVert \bm{A}\bm{x}-\bm{y}_{k+1}\lVert_{2}^{2}$
     \STATE $k=k+1$
     \ENDWHILE
     \STATE Output $\bm{x}_k$.
    \end{algorithmic}
\end{algorithm}

HTP has been demonstrated much more efficient than IHT for compressed sensing both theoretically and empirically. Since IHT is a first-order gradient-type algorithm, it converges at most linearly. On the contrary, HTP can break through the barrier of linear convergence, because it is a second-order Newton's method (see e.g. \cite{zhou2019global}). Its acceleration over IHT has been confirmed in many works \cite{blumensath2012accelerated,foucart2011hard}. More interestingly, HTP enjoys a finite-step termination property --- it gives the exact recovery of the underlying sparse signal after at most $O(\log(\|\bm{x}^{\natural}\|_2/|x_{\min}^{\natural}|))$ steps starting from any initial guess provided $\bm{A}$ satisfies the restricted isometry property (RIP), as proved in {\cite[Corollary 3.6]{foucart2011hard}}. Furthermore, the computational cost of HTP per iteration is the same order as that of IHT, if $s$ is sufficiently small compared to $n$. Therefore, HTP outperforms IHT significantly in compressed sensing.

Because the iteration in Algorithm \ref{alg:htp} is HTP for sparse phase retrieval, it is a second-order Newton's algorithm. Other existing nonconvex sparse phase retrieval algorithms are mostly the IHT algorithm \eqref{iter:iht} (for solving \eqref{problem:leastsq}) and \eqref{eq:pwf} (for solving \eqref{eq:conslsint}), and their truncation variants \cite{soltanolkotabi2019structured,cai2016optimal,wang2016sparse}. Those algorithms are first-order gradient-type algorithms. Therefore, according to the results in compressed sensing, our proposed algorithm is expected to require much fewer iterations to achieve an accurate sparse phase retrieval than those existing algorithms. This is indeed true, as shown by one example in Figure~\ref{fig:itererr}. More experimental results are demonstrated in Section \ref{section:experiments}. We see from these experimental results that: as expected, while IHT-type algorithms converge only linearly, the iteration in our proposed Algorithm \ref{alg:htp} converges superlinearly and it gives the exact recovery in just a few of iterations. Moreover, we will prove this theoretically in the next section, revealing that Algorithm \ref{alg:htp} inherits the finite-step convergence property of HTP. Since our proposed algorithm needs the same order of computational cost per iteration as IHT algorithms do when $s\lesssim\sqrt{n}$, Algorithm \ref{alg:htp} is an extremely efficient tool for sparse phase retrieval.

\begin{figure}[!htb]
\centering
{\includegraphics[clip=true,width=0.45\textwidth]{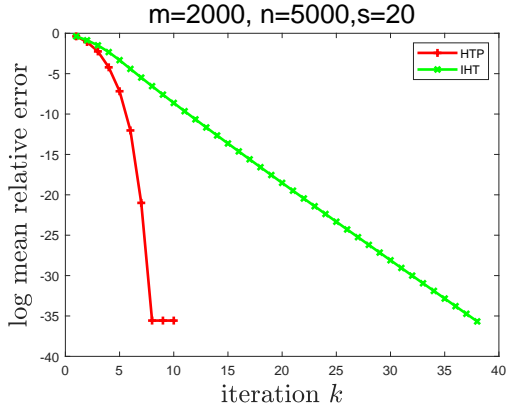}}
\caption{\label{fig:itererr}Sparse phase retrieval: mean relative error ($\log$) vs. iteration number $k$.  Sparsity is fixed to be $s=20$ in this example. The signal dimension $n$ is set to be $5000$, and the sample size $m=2000$. Using the same initialization and step size $\mu=0.75$, the results of IHT (described as~\eqref{iter:iht}) and HTP are shown in the figure. The mean relative error are obtained by averaging $100$ independent trial run. }
\end{figure}

\section{Theoretical Results}\label{section:theory}
When the sparse phase retrieval problem has only one solution up to a global sign, only $\pm\bm{x}^{\natural}$ are global minimizers of the non-convex optimization \eqref{problem:leastsq}. However, due to the non-convexity, no algorithm for solving \eqref{problem:leastsq} is guaranteed automatically to converge to a global minimizer, unless further analysis is provided. This section is devoted to present some theoretical results on the convergence guarantee of Algorithm \ref{alg:htp} to one of the global minimizers $\pm\bm{x}^{\natural}$, and the convergence speed is also investigated. In Section \ref{subsection:local}, we present results on local convergence of Algorithm \ref{alg:htp}. Also, combined with existing results on spectral initialization, we obtain the recovery guarantee of $\pm\bm{x}^{\natural}$ by Algorithm \ref{alg:htp} in Section \ref{subsection:global}.

\subsection{Local Convergence}\label{subsection:local}
We first present our result on the local convergence of Algorithm \ref{alg:htp}. In particular, we show that, when $O(s\log(n/s))$ Gaussian random measurements are used, Algorithm \ref{alg:htp} convergences to the underlying signal $\pm\bm{x}^{\natural}$ (under the metric in \eqref{eq:dist}) if it is initialized in a neighbourhood of $\pm\bm{x}^{\natural}$. More interestingly,Algorithm \ref{alg:htp} is able to return an exact solution after a finite number of steps, while typical local convergence rate of existing provable non-convex sparse phase retrieval algorithms is linear (theoretically). The algorithm finds $\bm{x}^{\natural}$ exactly after at most $O(\log m + \log(\|\bm{x}^{\natural}\|_2/|x_{\min}^{\natural}|))$ steps. The result is summarized in the following Theorem \ref{localconvergence}.

\begin{theorem}[Local convergence]\label{localconvergence}
Let $\{\bm{a}_i\}_{i=1}^m$ be i.i.d. Gaussian random vectors with mean $\bm{0}$ and covariance matrix $\bm{I}$. For any signal $\bm{x}^{\natural}\in\mathbb{R}^{n}$ satisfying $\|\bm{x}^{\natural}\|_0\leq s$, let $\{\bm{x}_k\}_{k\geq 1}$ be the sequence generated by Algorithm \ref{alg:htp} with the input measured data $y_i=|\langle\bm{a}_i,\bm{x}^{\natural}\rangle|$, $i=1,\ldots,m$, the step size $\mu$, and an initial guess $\bm{x}_0$. There exist universal positive constants $\lambda_0, C_0, C_1, C_2, C_3, \mu_1,\mu_2$ and a universal constant $\alpha\in(0,1)$ such that: If
$$
\mu\in[\mu_1,\mu_2],\quad
m\geq C_0s\log(n/s),\quad
\mathrm{dist}\left(\bm{x}_0,\bm{x}^{\natural}\right)\leq\lambda_0\|\bm{x}^{\natural}\|_2,
$$
then
\begin{enumerate}
\item[(a)] With probability at least $1-e^{-C_1 m}$,
$$
\mathrm{dist}\left(\bm{x}_{k+1},\bm{x}^{\natural}\right)\le \alpha \cdot\mathrm{dist}\left(\bm{x}_k,\bm{x}^{\natural}\right),\quad\forall~k\geq 0.
$$
\item[(b)] With probability at least $1-e^{-C_1 m}-m^{1-\beta}$,
$$
\mathrm{dist}\left(\bm{x}_k,\bm{x}^{\natural}\right)=0,
\quad\forall~k> C_2\cdot\max\Big\{\beta\log m,\log\big(\|\bm{x}^{\natural}\|_2/|x_{\min}^{\natural}|\big)\Big\}+C_3,
$$
where $\beta>1$ is arbitrary.
\end{enumerate}
\end{theorem}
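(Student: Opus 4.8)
The plan is to reinterpret Algorithm~\ref{alg:htp} as the compressed-sensing HTP of \cite{foucart2011hard} applied to the \emph{signal-dependent, noisy} linear system $\bm{y}_{k+1}=\bm{A}\bm{x}^{\natural}+\bm{e}_k$, and then to control the noise $\bm{e}_k$ by the current distance. Fix the global sign (without loss of generality $+$) so that $\mathrm{dist}(\bm{x}_k,\bm{x}^{\natural})=\|\bm{x}_k-\bm{x}^{\natural}\|_2$; contraction will keep this choice consistent along the iteration. Writing $\bm{h}_k=\bm{x}_k-\bm{x}^{\natural}$ and $\bm{e}_k:=\bm{y}_{k+1}-\bm{A}\bm{x}^{\natural}$, a direct computation shows that $\bm{e}_k$ is supported on the sign-flip set $\mathcal{F}_k=\{i:\mathrm{sgn}(\bm{a}_i^T\bm{x}_k)\neq\mathrm{sgn}(\bm{a}_i^T\bm{x}^{\natural})\}$, with $(\bm{e}_k)_i=-2(\bm{A}\bm{x}^{\natural})_i$ there. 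Since each sub-step of Algorithm~\ref{alg:htp} coincides verbatim with an HTP sub-step on data $\bm{y}_{k+1}$, the analysis reduces to the robustness theory of HTP under RIP, the two new ingredients being a quantitative bound on $\|\bm{e}_k\|_2$ and the eventual vanishing of $\bm{e}_k$. I would first record the two facts I need from \cite{foucart2011hard}: under a restricted isometry property of order $3s$ with small enough constant $\delta$, one HTP iteration on $\bm{A}\bm{x}^{\natural}+\bm{e}$ obeys $\|\bm{x}_{k+1}-\bm{x}^{\natural}\|_2\le\rho\|\bm{x}_k-\bm{x}^{\natural}\|_2+C_\mu\|\bm{e}_k\|_2$ with $\rho<1$ (support capture together with the least-squares step, both bounded through $\|\bm{A}_{\sS}^\dagger\|_2\le(1-\delta)^{-1/2}$), and, when $\bm{e}=\bm{0}$, it enjoys the finite-termination of \cite[Corollary~3.6]{foucart2011hard}. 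That the Gaussian $\bm{A}$ with $m\ge C_0 s\log(n/s)$ satisfies this RIP with probability $1-e^{-C_1 m}$ is classical.

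The crux of part (a) is the phase-flip estimate. On $\mathcal{F}_k$ one has $|\bm{a}_i^T\bm{x}^{\natural}|\le|\bm{a}_i^T\bm{h}_k|$, whence $\|\bm{e}_k\|_2^2\le\frac{4}{m}\sum_{i=1}^m(\bm{a}_i^T\bm{h}_k)^2\,\mathbf{1}\{|\bm{a}_i^T\bm{h}_k|\ge|\bm{a}_i^T\bm{x}^{\natural}|\}$. For a fixed $2s$-sparse direction a Gaussian moment computation gives an expectation of order $\|\bm{h}_k\|_2^3/\|\bm{x}^{\natural}\|_2$, since the indicator forces $\bm{a}_i^T\bm{x}^{\natural}$ to be atypically small and contributes the extra factor $\|\bm{h}_k\|_2/\|\bm{x}^{\natural}\|_2$. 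I would upgrade this to a bound uniform over all $2s$-sparse $\bm{h}$ via a covering/chaining argument, obtaining, on an event of probability $1-e^{-C_1 m}$,
$$\|\bm{e}_k\|_2\;\le\;C\,\sqrt{\mathrm{dist}(\bm{x}_k,\bm{x}^{\natural})/\|\bm{x}^{\natural}\|_2}\;\cdot\;\mathrm{dist}(\bm{x}_k,\bm{x}^{\natural}).$$
Feeding this into the contraction and using $\mathrm{dist}(\bm{x}_k,\bm{x}^{\natural})\le\lambda_0\|\bm{x}^{\natural}\|_2$ yields $\mathrm{dist}(\bm{x}_{k+1},\bm{x}^{\natural})\le(\rho+C_\mu C\sqrt{\lambda_0})\,\mathrm{dist}(\bm{x}_k,\bm{x}^{\natural})$; choosing $\lambda_0$ small makes $\alpha:=\rho+C_\mu C\sqrt{\lambda_0}<1$, which also shows the iterate never leaves the neighbourhood, so (a) follows by induction.

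For the finite termination in part (b) I would show that $\bm{e}_k$ becomes \emph{exactly zero} once the iterate is close enough, because $\mathcal{F}_k=\emptyset$ whenever $\max_i|\bm{a}_i^T\bm{h}_k|<\min_i|\bm{a}_i^T\bm{x}^{\natural}|$. A union bound based on the anti-concentration of a Gaussian near $0$ gives $\min_i|\bm{a}_i^T\bm{x}^{\natural}|\ge m^{-\beta}\|\bm{x}^{\natural}\|_2$ with probability $1-m^{1-\beta}$, while on the RIP event $\max_i\|[\bm{a}_i]_{S}\|_2\lesssim\sqrt{s+\log m}$ uniformly over $|S|\le 2s$, so $\max_i|\bm{a}_i^T\bm{h}_k|\lesssim\sqrt{s+\log m}\,\|\bm{h}_k\|_2$. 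Combined with the geometric decay $\|\bm{h}_k\|_2\le\alpha^k\lambda_0\|\bm{x}^{\natural}\|_2$ from (a), the condition $\mathcal{F}_k=\emptyset$ holds once $k\gtrsim\beta\log m$, i.e. after $O(\beta\log m)$ steps the algorithm runs on the exact linear system. Simultaneously, correct support identification $\sS_{k+1}=\mathrm{supp}(\bm{x}^{\natural})=:T$ requires only $\|\bm{h}_k\|_2\lesssim|x_{\min}^{\natural}|$, i.e. $k\gtrsim\log(\|\bm{x}^{\natural}\|_2/|x_{\min}^{\natural}|)$. Once both hold, the least-squares sub-step returns $\bm{x}^{\natural}$ exactly, as $\bm{A}_{T}^\dagger\bm{A}_{T}\bm{x}^{\natural}_T=\bm{x}^{\natural}_T$, and the iterate is pinned there forever; taking the larger of the two thresholds gives $k>C_2\max\{\beta\log m,\log(\|\bm{x}^{\natural}\|_2/|x_{\min}^{\natural}|)\}+C_3$ with total failure probability $e^{-C_1 m}+m^{1-\beta}$, proving (b).

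I expect the main obstacle to be the uniform phase-flip estimate: the summand carries a discontinuous indicator, so standard concentration for smooth quadratic forms does not apply directly, and the $3/2$-power scaling---indispensable for driving $\alpha$ below $1$---must survive the passage from a single direction to a supremum over the entire $2s$-sparse sphere. Handling this requires a careful covering argument controlling the Lipschitz-in-$\bm{h}$ behaviour of the truncated quadratic together with a union bound over supports, and is the technical heart of the theorem; the remaining pieces (RIP, the robustness and finite-termination lemmas of \cite{foucart2011hard}, and the elementary Gaussian tail bounds feeding the $m^{1-\beta}$ probability) are comparatively routine.
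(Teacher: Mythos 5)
Your architecture is sound and, for part (a), essentially coincides with the paper's: the paper also treats $\bm{y}_{k+1}=\bm{A}\bm{x}^{\natural}+\bm{e}_k$ as a noisy compressed-sensing instance, derives the one-step recursion $\|\bm{x}_{k+1}-\bm{x}^{\natural}\|_2\le\alpha\|\bm{x}_k-\bm{x}^{\natural}\|_2$ from the RIP of $\bm{A}$ (their Propositions~\ref{ARIP}--\ref{RIP}) combined with a bound on $\|\bm{A}_{\sS_{k+1}}^T\bm{e}_k\|_2$, except that it rederives the robust HTP recursion from the normal equations (Lemmas~\ref{bound:Ax-y} and~\ref{contraction:u0}) rather than citing \cite{foucart2011hard} as a black box. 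One caution on the step you rightly identify as the technical heart: the pure $3/2$-power bound $\|\bm{e}_k\|_2\lesssim\sqrt{\|\bm{h}_k\|_2/\|\bm{x}^{\natural}\|_2}\,\|\bm{h}_k\|_2$ holds in expectation for a fixed direction but does \emph{not} survive uniformization over the $2s$-sparse sphere in that exact form; the available uniform statement (the paper invokes \cite[Lemma 25]{soltanolkotabi2019structured}) is $\|\bm{e}_k\|_2\le\frac{2}{1-\lambda_0}\bigl(\varepsilon_0+\lambda_0\sqrt{21/20}\bigr)\|\bm{h}_k\|_2$ with an irreducible additive $\varepsilon_0$ paid for in the sampling constant $C_0$. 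Since all you need is a prefactor that is small for small $\lambda_0$ and $\varepsilon_0$, this only changes $\alpha$ from $\rho+C_\mu C\sqrt{\lambda_0}$ to $\rho+C_\mu(\varepsilon_0+C\lambda_0)$ and your conclusion stands, but you should not expect to prove the uniform $3/2$-power form. For part (b) your route is genuinely different from the paper's: you kill the sign flips by the pointwise separation $\max_i|\bm{a}_i^T\bm{h}_k|<\min_i|\bm{a}_i^T\bm{x}^{\natural}|$, whereas the paper bounds the \emph{number} of flips, $|\G_k^c|\le\frac{\lambda_0\sqrt{C_{\lambda_0}(1+\delta_s)}\|\bm{x}^{\natural}\|_2^2}{2|y_{\min}|^2}\alpha^k$, via an inner-product identity on $\sS_{k+1}\supseteq\sS_{\natural}$ and Cauchy--Schwarz, then waits until this drops below $1$. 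Both hinge on the same anti-concentration bound $y_{\min}\gtrsim m^{-\beta-1/2}\|\bm{x}^{\natural}\|_2$ with probability $1-m^{1-\beta}$ and both yield $O(\beta\log m)$ extra iterations; your version needs a uniform control of $\max_i\sup_{|S|\le 2s}\|[\bm{a}_i]_S\|_2$, which is cheap (e.g.\ $\max_i|\bm{a}_i^T\bm{h}_k|\le\sqrt{m(1+\delta_{2s})}\,\|\bm{h}_k\|_2$ directly from RIP, costing only an additive $O(\log m)$ in the iteration count), while the paper's counting argument avoids any $\ell_\infty$ bound but requires first establishing $\sS_{\natural}\subseteq\sS_{k+1}$. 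Two minor imprecisions to fix in a write-up: $\sS_{k+1}$ has cardinality exactly $s$ and need only \emph{contain} $\mathrm{supp}(\bm{x}^{\natural})$, not equal it, so the exact-recovery step should argue that $\bm{x}^{\natural}$ is the unique minimizer of the restricted least squares by injectivity of $\bm{A}$ on $2s$-sparse vectors; and you should note explicitly that once $\bm{x}_{k+1}=\bm{x}^{\natural}$ the iteration is stationary, so the conclusion holds for all subsequent $k$.
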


The proof of Theorem~\ref{localconvergence} is deferred to Section~\ref{section:proofs}.  We see from Theorem~\ref{localconvergence} that our algorithm not only converges linearly but also enjoys a finite-step termination with exact recovery which depends on the dynamics of the underlying signal. Meanwhile, according to Theorem 5 in \cite{bouchot2016hard}, where a different technique is used, the maximum number of iterations of HTP for compressed sensing would not exceed $2s$ if provided the RIP constant $\delta_{3s}\le \frac{1}{3}$. Thus, the early termination of HTP for compressed sensing is independent of the shape of the underlying signal. On the other hand, the above Theorem \ref{localconvergence} indicates that if the shape of the underlying signal is assumed, one can estimate the upper bound of steps needed for early termination. In fact, many other sparse phase retrieval algorithms \cite{wang2016sparse,netrapalli2013phase} require $x_{\min}^{\natural}$ to be as small as $O(\frac{1}{\sqrt{s}})\|\bm{x}^{\natural}\|_2$. However, this assumption does not hold for many signals, e.g., the $s$-sparse signal decays in form $x^{\natural}_j\sim O(j^{-\gamma})\|\bm{x}^{\natural}\|_2$ for some positive $\gamma$, $j\in \{1,2,\cdots,s\}$ . Thus, if we further assume $ x_{\min}^{\natural}\sim O(s^{-\gamma})\|\bm{x}^{\natural}\|_2$, our algorithm finds an exact global minimizer within $O(\gamma \log m)$ steps for $s\lesssim m$.
Moreover, the computational cost per iteration of our algorithm is in the same order as several matrix-vector products with $\bm{A}$ and $\bm{A}^T$ if $s$ is small compared to $n$. Therefore, the proposed algorithm is efficient.

Now we consider the case that the measurements are not perfect and given by $\bm{y}^{(\varepsilon)}=|\bm{A}\bm{x}^{\natural}|+\bm{\varepsilon}$, where $\bm{\varepsilon}\in \mathbb{R}^m$ is the noise. The following corollary provides a convergence guarantee for HTP in this noisy case, and it can be proved using almost the same argument as Theorem~\ref{localconvergence}. In the corollary, we state that the HTP algorithm is locally stable and robust with respect to additive measurements error.
\begin{corollary}[The noisy case]\label{localconvergence:noisy}
Let $\{\bm{a}_i\}_{i=1}^m$ be i.i.d. Gaussian random vectors with mean $\bm{0}$ and covariance matrix $\bm{I}$. For any signal $\bm{x}^{\natural}\in\mathbb{R}^{n}$ satisfying $\|\bm{x}^{\natural}\|_0\leq s$ and any $\bm{\varepsilon}\in \mathbb{R}^m$, let $\{\bm{x}_k\}_{k\geq 1}$ be the sequence generated by Algorithm \ref{alg:htp} with the input measured data $\bm{y}=\bm{y}^{(\varepsilon)}:=|\bm{A}\bm{x}^{\natural}|+\bm{\varepsilon}$ and an initial guess $\bm{x}_0$. There exist some constant step size $\mu$ and universal positive constants $\lambda_1, C_0, C_1$ and $\alpha_1\in(0,1), d\in (0,3.3)$ such that: If
$$
m\geq C_0s\log(n/s),\quad
\mathrm{dist}\left(\bm{x}_0,\bm{x}^{\natural}\right)\leq\lambda_1\|\bm{x}^{\natural}\|_2,
$$
then with probability at least $1-e^{-C_1 m}$,
$$
\mathrm{dist}\left(\bm{x}_{k+1},\bm{x}^{\natural}\right)\le \alpha_1 \cdot\mathrm{dist}\left(\bm{x}_k,\bm{x}^{\natural}\right)+d\lV\bm{\varepsilon}\rV_2,\quad\forall~k\geq 0.
$$
\end{corollary}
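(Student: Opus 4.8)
The plan is to re-run the local-convergence argument behind Theorem~\ref{localconvergence}(a), carrying the additive noise through every estimate. As in the noiseless proof I would first use the $\pm$ symmetry of \eqref{eq:dist} to assume without loss of generality that $\bm{x}_k$ is nearer to $+\bm{x}^{\natural}$, so $\mathrm{dist}(\bm{x}_k,\bm{x}^{\natural})=\lV\bm{x}_k-\bm{x}^{\natural}\rV_2$. Writing $\bm{s}_k=\mathrm{sgn}(\bm{A}\bm{x}_k)$, $\bm{s}^{\natural}=\mathrm{sgn}(\bm{A}\bm{x}^{\natural})$, and using $|\bm{A}\bm{x}^{\natural}|=(\bm{A}\bm{x}^{\natural})\odot\bm{s}^{\natural}$, the sign-phased data of Algorithm~\ref{alg:htp} decomposes as
\begin{equation*}
\bm{y}_{k+1}=\bm{y}^{(\varepsilon)}\odot\bm{s}_k=\bm{A}\bm{x}^{\natural}+\bm{h}_k+\bm{\varepsilon}\odot\bm{s}_k,\qquad \bm{h}_k:=(\bm{A}\bm{x}^{\natural})\odot(\bm{s}^{\natural}\odot\bm{s}_k-\bm{1}).
\end{equation*}
The term $\bm{h}_k$ is exactly the phase-mismatch vector already controlled by $\mathrm{dist}(\bm{x}_k,\bm{x}^{\natural})$ in the proof of Theorem~\ref{localconvergence}; the only genuinely new object is $\bm{\varepsilon}\odot\bm{s}_k$, which enters \emph{linearly} and obeys $\lV\bm{\varepsilon}\odot\bm{s}_k\rV_2\le\lV\bm{\varepsilon}\rV_2$ because the entries of $\bm{s}_k$ lie in $\{-1,0,1\}$.

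First I would treat the least-squares step. Using the normal-equation solution $[\bm{x}_{k+1}]_{\sS_{k+1}}=(\bm{A}_{\sS_{k+1}}^T\bm{A}_{\sS_{k+1}})^{-1}\bm{A}_{\sS_{k+1}}^T\bm{y}_{k+1}$, I would split $\bm{A}\bm{x}^{\natural}=\bm{A}_{\sS_{k+1}}[\bm{x}^{\natural}]_{\sS_{k+1}}+\bm{A}_{\sS_{k+1}^c}[\bm{x}^{\natural}]_{\sS_{k+1}^c}$ and subtract $[\bm{x}^{\natural}]_{\sS_{k+1}}$. On the $3s$-sparse index sets in play the Gaussian $\bm{A}$ satisfies, on the same event of probability $1-e^{-C_1m}$ used for Theorem~\ref{localconvergence}, the bounds $\lV(\bm{A}_{\sS}^T\bm{A}_{\sS})^{-1}\rV_2\le(1-\d)^{-1}$, $\lV\bm{A}_{\sS}\rV_2\le\sqrt{1+\d}$, and a restricted-orthogonality estimate. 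These give
\begin{equation*}
\lV[\bm{x}_{k+1}]_{\sS_{k+1}}-[\bm{x}^{\natural}]_{\sS_{k+1}}\rV_2\le \frac{\d}{1-\d}\lV[\bm{x}^{\natural}]_{\sS_{k+1}^c}\rV_2+\frac{1}{1-\d}\lV\bm{A}_{\sS_{k+1}}^T\bm{h}_k\rV_2+\frac{\sqrt{1+\d}}{1-\d}\lV\bm{\varepsilon}\rV_2,
\end{equation*}
and, combined with $\lV\bm{x}_{k+1}-\bm{x}^{\natural}\rV_2^2=\lV[\bm{x}_{k+1}]_{\sS_{k+1}}-[\bm{x}^{\natural}]_{\sS_{k+1}}\rV_2^2+\lV[\bm{x}^{\natural}]_{\sS_{k+1}^c}\rV_2^2$, isolate a clean noise floor $\tfrac{\sqrt{1+\d}}{1-\d}\lV\bm{\varepsilon}\rV_2$.

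Next I would redo the support-identification step, bounding $\lV[\bm{x}^{\natural}]_{\sS_{k+1}^c}\rV_2$. Since $\sS_{k+1}$ is the support of the top-$s$ magnitudes of $\bm{u}_k:=\bm{x}_k+\mu\bm{A}^T(\bm{y}_{k+1}-\bm{A}\bm{x}_k)$, I would compare $\bm{u}_k$ with $\bm{x}^{\natural}$ via the same one-step IHT estimate as in the noiseless proof. The substitution $\bm{y}_{k+1}-\bm{A}\bm{x}_k=(\bm{A}\bm{x}^{\natural}-\bm{A}\bm{x}_k)+\bm{h}_k+\bm{\varepsilon}\odot\bm{s}_k$ produces, beyond the phase-mismatch term already handled in Theorem~\ref{localconvergence}, an extra contribution $\mu\bm{A}^T(\bm{\varepsilon}\odot\bm{s}_k)$ whose restriction to any $3s$-set has norm at most $\mu\sqrt{1+\d}\lV\bm{\varepsilon}\rV_2$. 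The standard top-$s$ selection inequality then yields $\lV[\bm{x}^{\natural}]_{\sS_{k+1}^c}\rV_2\le \rho\,\mathrm{dist}(\bm{x}_k,\bm{x}^{\natural})+c\lV\bm{\varepsilon}\rV_2$ with $\rho<1$ for $\mu$ in the admissible range. Feeding this into the least-squares bound and collecting all linear-in-$\bm{\varepsilon}$ terms gives the claimed recursion $\mathrm{dist}(\bm{x}_{k+1},\bm{x}^{\natural})\le\a_1\,\mathrm{dist}(\bm{x}_k,\bm{x}^{\natural})+d\lV\bm{\varepsilon}\rV_2$, where $\a_1<1$ and $d$ is an explicit combination of $\d$, $\mu$, and the constants above; choosing $\d$ small and $\mu$ near the noiseless optimum keeps $d$ below the stated bound $3.3$. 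The noise is treated deterministically, so no probability beyond the RIP event is consumed, which is why the success probability stays $1-e^{-C_1m}$ and, unlike Theorem~\ref{localconvergence}(b), no finite-step exact-recovery claim is made: the floor $d\lV\bm{\varepsilon}\rV_2$ obstructs driving the error to zero.

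The main obstacle I anticipate is keeping the phase-mismatch analysis cleanly separated from the noise. In the noiseless proof the set of sign flips and the magnitude of $\bm{h}_k$ are governed entirely by $\mathrm{dist}(\bm{x}_k,\bm{x}^{\natural})$ through concentration of the Gaussian $\bm{A}$ alone, and I must verify that injecting $\bm{\varepsilon}$ does not feed back into the sign pattern $\bm{s}_k$ in a way that inflates the contraction factor $\a_1$. Because $\bm{s}_k=\mathrm{sgn}(\bm{A}\bm{x}_k)$ depends only on the iterate and not directly on $\bm{\varepsilon}$, this decoupling holds, but the argument must be written so that every appearance of $\bm{\varepsilon}$ is bounded by $\lV\bm{\varepsilon}\rV_2$ (via $|s_{k,i}|\le 1$) and never by a quantity that the contraction would otherwise have to absorb. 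Certifying the explicit value $d<3.3$ is then routine bookkeeping on top of the estimates already developed for Theorem~\ref{localconvergence}.
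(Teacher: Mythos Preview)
Your proposal is correct and follows essentially the same route as the paper's proof: decompose $\bm{y}_{k+1}$ into the clean signal, the phase-mismatch term $\bm{h}_k$ already controlled by Lemma~\ref{bound:Ax}, and the signed noise $\bm{\varepsilon}\odot\bm{s}_k$ with $\lV\bm{\varepsilon}\odot\bm{s}_k\rV_2\le\lV\bm{\varepsilon}\rV_2$; then carry this extra linear term through the IHT/support bound (Lemma~\ref{contraction:u0}) and the least-squares bound exactly as in the proof of Theorem~\ref{localconvergence}(a). The paper organizes this as noisy variants of Lemmas~\ref{bound:Ax-y} and~\ref{contraction:u0} and then combines via $\sqrt{a^2+b^2}\le|a|+|b|$, arriving at $d=\big(\tfrac{1+2\delta_{2s}}{1-\delta_s}+2\big)\sqrt{1+\delta_s}\le 3.24$ for $\delta_{3s}\le 0.05$, which matches your bookkeeping.
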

The proof of Corollary~\ref{localconvergence} is deferred to Section~\ref{subsec:proofnoisy}. Therefore, by the local result in Corollary~\ref{localconvergence:noisy}, we know for some small $\lambda_1$, if there is $\mathrm{dist}\left(\bm{x}_0,\bm{x}^{\natural}\right)\leq\lambda_1\|\bm{x}^{\natural}\|_2$ and $m$ at least $O(s\log(n/s))$, then with overwhelming probability we have
\begin{equation*}
\mathrm{dist}\left(\bm{x}_{k},\bm{x}^{\natural}\right)\le \alpha_1^k \lambda_1\|\bm{x}^{\natural}\|_2+d\frac{1-\alpha_1^k}{1-\alpha_1}\lV\bm{\varepsilon}\rV_2,\quad\forall~k\geq 0.
\end{equation*}

\subsection{Initialization and Recovery Guarantees}\label{subsection:global}
To have a recovery guarantee, it remains to design an initial guess $\bm{x}_0$ to satisfy the condition $\mathrm{dist}(\bm{x}_0,\bm{x}^{\natural})\leq \lambda_0\|\bm{x}^{\natural}\|_2$ in Theorem \ref{localconvergence}. The same as many existing phase retrieval algorithms \cite{waldspurger2018phase,chen2019gradient,tan2019online,candes2015phase,chen2015solving,netrapalli2013phase,wang2016sparse,cai2016optimal,jagatap2019sample}, we use a spectral initialization to achieve this goal. The idea of spectral initializations is to construct a matrix whose expectation has $\pm\bm{x}^{\natural}$ as the principal eigenvectors, and thus a principal eigenvector of that matrix is a good approximation to $\pm\bm{x}^{\natural}$.

Consider the case where $\{\bm{a}_i\}_{i=1}^{m}$ are independent Gaussian random vectors. In the standard phase retrieval setting \eqref{problem:PR} without the sparsity constraint, it can be easily shown that the expectation of the matrix $\frac1m\sum_{i=1}^my_i^2\bm{a}_i \bm{a}_i^T$ is $\lV \bm{x}^{\natural}\rV_2^2\cdot\bm{I}+2(\bm{x}^{\natural})(\bm{x}^{\natural})^T$, whose principal eigenvectors are $\pm\bm{x}^{\natural}$. Therefore, we use a principal eigenvector of $\frac1m\sum_{i=1}^my_i^2\bm{a}_i \bm{a}_i^T$ as the initialization. This is the spectral initialization used in, e.g., \cite{candes2015phase}. Other spectral initializations may use principal eigenvectors of variants of $\frac1m\sum_{i=1}^my_i^2\bm{a}_i \bm{a}_i^T$. For example, in the truncated spectral initialization \cite{chen2015solving}, a principal eigenvector of a truncated version of $\frac1m\sum_{i=1}^my_i^2\bm{a}_i \bm{a}_i^T$ is computed to approximate $\bm{x}^{\natural}$ initially. The optimal construction and its asymptotically analysis can be found in \cite{luo2019optimal}.

For sparse phase retrieval, one naive way is to use spectral initializations for standard phase retrieval directly. However, this will need unnecessarily many measurements, and the best sampling complexity expected is $m\sim n$. To overcome this, we have to utilize the sparsity of $\bm{x}^{\natural}$. The idea is to first estimate the support of $\bm{x}^{\natural}$, and then obtain the initial guess by a principal eigenvector of the matrix $\frac1m\sum_{i=1}^my_i^2\bm{a}_i \bm{a}_i^T$ (or its variants) restricted to the estimated support. Though various spectral initialization techniques are valid for our algorithm, we follow a natural strategy introduced in \cite{jagatap2019sample}, which is as in the following.
\begin{enumerate}\label{enum}
\item[(Init-1)] The support of $\bm{x}^{\natural}$ is estimated by the set of indices of top-$s$ values in $\left\{\frac{1}{m}\sum_{i=1}^{m}y_i^2[\bm{a}_{i}]_j^2\right\}_{j=1}^{n}$, denoted by $\tilde{\sS}$. Since the expectation of $\left\{\frac{1}{m}\sum_{i=1}^{m}y_i^2[\bm{a}_{i}]_j^2\right\}_{j=1}^{n}$ is $\big\{\frac1m\big(\|\bm{x}^{\natural}\|_2^2+2(x_j^{\natural})^2\big)\big\}_{j=1}^{n}$, $\tilde{\sS}$ could be a good approximation of the support of $\bm{x}^{\natural}$.
\item[(Init-2)] We let $[\bm{x}_0]_{\tilde{\sS}}$ be a principal eigenvector of $\frac{1}{m}\sum_{i=i}^my_i^2[\bm{a}_{i}]_{\tilde{\sS}}[\bm{a}_{i}]_{\tilde{\sS}}^T$ with length $\|\bm{y}\|_2$, and $[\bm{x}_0]_{\tilde{\sS}^c}=0$. The reason is that $[\pm\bm{x}^{\natural}]_{\tilde{\sS}}$ is the principal eigenvector of the expectation of $\frac{1}{m}\sum_{i=i}^my_i^2[\bm{a}_{i}]_{\tilde{\sS}}[\bm{a}_{i}]_{\tilde{\sS}}^T$, and $\|\bm{x}^{\natural}\|_2$ is the expectation of $\|\bm{y}\|_2$.
\end{enumerate}

This choice of $\bm{x}_0$ indeed satisfies the requirement on $\bm{x}_0$ for any $\lambda_0$ in Theorem \ref{localconvergence}, as stated in \cite[Theorem IV.1]{jagatap2019sample}.

\begin{lemma}[{\cite[Theorem IV.1]{jagatap2019sample}}]\label{lemma:ini}
Let $\{\bm{a}_i\}_{i=1}^m$ be i.i.d. Gaussian random vectors with mean $\bm{0}$ and covariance matrix $\bm{I}$. Let $\bm{x}_0$ be generated by (Init-1) and (Init-2) with input $y_i=|\bm{a}_i^T\bm{x}^{\natural}|$ for $i=1,\ldots,m$, where $\bm{x}^{\natural}\in\mathbb{R}^{n}$ can be any signal satisfying $\|\bm{x}^{\natural}\|_0\leq s$. Then for any $\lambda_0\in(0,1)$, there exist a positive constant $C_4$ depending only on $\lambda_0$ such that if provided $m\ge C_4 s^2\log n$, we have
\begin{align*}
\mathrm{dist}\left(\bm{x}_0,\bm{x}^{\natural}\right)\le \lambda_0\lV\bm{x}^{\natural}\rV_2
\end{align*}
with probability at least $1-8m^{-1}$.
\end{lemma}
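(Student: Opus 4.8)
The plan is to split the argument along the two sub-steps (Init-1) and (Init-2), treating them as two essentially independent concentration analyses: first I would show that the estimated support $\tilde{\sS}$ captures all but a negligible fraction of the energy of $\bm{x}^{\natural}$, and then that the restricted spectral step recovers both the \emph{direction} and the \emph{magnitude} of $[\bm{x}^{\natural}]_{\tilde{\sS}}$ to the prescribed relative accuracy. By homogeneity I would normalize $\lV\bm{x}^{\natural}\rV_2=1$. The whole statement then reduces to three ingredients: a uniform deviation bound on the coordinate marginals $M_j$, an operator-norm deviation bound on the restricted second-moment matrix, and a scalar concentration of $\lV\bm{y}\rV_2$; these are stitched together by a triangle inequality and a Davis--Kahan type perturbation estimate.

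For the support step, set $M_j:=\frac1m\sum_{i=1}^m y_i^2[\bm{a}_i]_j^2$ and $T:=\mathrm{supp}(\bm{x}^{\natural})$. A fourth-moment computation for standard Gaussians gives $\mathbb{E}M_j=\lV\bm{x}^{\natural}\rV_2^2+2(x_j^{\natural})^2$ (the $1/m$ scaling in (Init-1) only rescales this and is irrelevant to the ranking), so on-support and off-support expected marginals differ by the gap $2(x_j^{\natural})^2$. The key observation is that if a true-support index $j_0$ is dropped from the top-$s$ set, an off-support index must have overtaken it, which—after cancelling the common offset $\lV\bm{x}^{\natural}\rV_2^2$—forces $(x_{j_0}^{\natural})^2\le\varepsilon$ with $\varepsilon:=\max_j|M_j-\mathbb{E}M_j|$; hence the missed energy obeys $\sum_{j\in T\setminus\tilde{\sS}}(x_j^{\natural})^2\le s\varepsilon$. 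Each summand $(\bm{a}_i^T\bm{x}^{\natural})^2[\bm{a}_i]_j^2$ is a degree-four polynomial in Gaussians, hence sub-Weibull of order $1/2$, and a generalized Bernstein inequality gives $|M_j-\mathbb{E}M_j|\le\delta$ with probability $1-2e^{-cm\delta^2}$ in the relevant small-$\delta$ regime; a union bound over the $n$ coordinates controls $\varepsilon$. Demanding $s\varepsilon\lesssim\lambda_0^2$ forces $\delta\sim\lambda_0^2/s$, and then the union bound needs $m\gtrsim s^2\log n/\lambda_0^4$, which is exactly the $C_4 s^2\log n$ budget with $C_4$ absorbing the $\lambda_0$ dependence.

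For the spectral step, write $\bm{Y}_{\sS'}:=\frac1m\sum_i y_i^2[\bm{a}_i]_{\sS'}[\bm{a}_i]_{\sS'}^T$; the same moment computation yields $\mathbb{E}\bm{Y}_{\sS'}=\lV\bm{x}^{\natural}\rV_2^2\bm{I}+2[\bm{x}^{\natural}]_{\sS'}[\bm{x}^{\natural}]_{\sS'}^T$, whose top eigenvector is $[\bm{x}^{\natural}]_{\sS'}/\lV[\bm{x}^{\natural}]_{\sS'}\rV_2$ with spectral gap $2\lV[\bm{x}^{\natural}]_{\sS'}\rV_2^2$, and by the previous step this gap is $\approx 2\lV\bm{x}^{\natural}\rV_2^2$. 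Davis--Kahan then bounds the angle between the computed principal eigenvector and $[\bm{x}^{\natural}]_{\tilde{\sS}}$ by $\lV\bm{Y}_{\tilde{\sS}}-\mathbb{E}\bm{Y}_{\tilde{\sS}}\rV_2$ divided by the gap, so it suffices to make the operator-norm deviation $O(\lambda_0)$. Since $\tilde{\sS}$ is data dependent I would prove this deviation bound \emph{uniformly} over all index sets of size $s$, combining an $\varepsilon$-net on each $s$-dimensional sphere (cost $e^{O(s)}$) with a union bound over the $\binom{n}{s}$ supports (cost $e^{O(s\log(n/s))}$); this needs only $m\gtrsim s\log(n/s)/\lambda_0^2$, dominated by the support budget. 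Separately, $\lV\bm{y}\rV_2^2=\frac1m\sum_i(\bm{a}_i^T\bm{x}^{\natural})^2$ concentrates at $\lV\bm{x}^{\natural}\rV_2^2$, fixing the magnitude of $\bm{x}_0$. Assembling the three estimates—the eigenvector is close in direction to $[\bm{x}^{\natural}]_{\tilde{\sS}}$, which (missed energy $\le s\varepsilon$) is close to $\bm{x}^{\natural}$, and $\lV\bm{x}_0\rV_2=\lV\bm{y}\rV_2\approx\lV\bm{x}^{\natural}\rV_2$—and choosing the aligning global sign yields $\mathrm{dist}(\bm{x}_0,\bm{x}^{\natural})\le\lambda_0\lV\bm{x}^{\natural}\rV_2$, with the three failure probabilities combining to the claimed $8m^{-1}$.

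The hard part will be the data dependence of $\tilde{\sS}$ in the spectral step: one cannot fix the support and invoke a single-subspace bound, so the uniform-over-supports operator-norm estimate—together with the heavy sub-Weibull$(1/2)$ tails of the degree-four summands, which preclude the plain sub-exponential Bernstein inequality—is where the real work lies. A secondary nuisance is tracking the dependence of every constant on $\lambda_0$ so that a single $C_4(\lambda_0)$ governs the sample complexity, and verifying that each of the three failure events can be pushed below $O(m^{-1})$ once $m\gtrsim s^2\log n$.
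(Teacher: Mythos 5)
The paper itself gives no proof of this lemma: it is imported verbatim as Theorem~IV.1 of \cite{jagatap2019sample} (the CoPRAM paper), so there is no in-paper argument to compare against. Your outline is, in substance, a faithful reconstruction of the cited proof: the same two-block structure (marginal concentration for the support estimate, then a restricted spectral/Davis--Kahan step), the same observation that a dropped on-support index must have marginal within the deviation level of an off-support one so that the missed energy is at most $s\varepsilon$, the same source of the $s^2\log n$ sample complexity (a union bound over $n$ coordinates with per-coordinate accuracy $\sim\lambda_0^2\lV\bm{x}^{\natural}\rV_2^2/s$), and the same norm calibration via $\lV\bm{y}\rV_2$. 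The two delicate points you flag are indeed where the cited proof spends its effort: the summands $y_i^2[\bm{a}_i]_j^2$ are fourth-order Gaussian chaoses, so the plain sub-exponential Bernstein bound does not apply and one needs either a truncation argument or a heavier-tailed concentration inequality (this is also what limits the final failure probability to the polynomial rate $8m^{-1}$ rather than an exponential one); and the data dependence of $\tilde{\sS}$ in the spectral step must be handled, which \cite{jagatap2019sample} does by working on the fixed-cardinality union $\tilde{\sS}\cup\mathrm{supp}(\bm{x}^{\natural})$ of size at most $2s$ with a uniform bound of the kind you describe. I see no gap in your plan beyond these acknowledged technical burdens; it is the standard route to this statement.
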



Combined with the local convergence theorem, we obtain the recovery guarantee of our proposed Algorithm \ref{alg:htp}.
\begin{theorem}[Recovery Guarantee]\label{globalconvergence}
 Let $\{\bm{a}_i\}_{i=1}^m$ be i.i.d. Gaussian random vectors with mean $\bm{0}$ and covariance matrix $\bm{I}$. For any signal $\bm{x}^{\natural}\in\mathbb{R}^{n}$ satisfying $\|\bm{x}^{\natural}\|_0\leq s$, let $\{\bm{x}_k\}_{k\geq 1}$ be the sequence generated by Algorithm \ref{alg:htp} with the input measured data $y_i=|\langle\bm{a}_i,\bm{x}^{\natural}\rangle|$, $i=1,\ldots,m$, the step size $\mu$, and an initial guess $\bm{x}_0$ generated by (Init-1) and (Init-2). There exist universal positive constants $\mu_1,\mu_2, C_1, C_2, C_3, C_5$ and a universal constant $\alpha\in(0,1)$ such that: If
$$
\mu\in[\mu_1,\mu_2],\quad
m\geq C_5s^2\log n,
$$
then
\begin{enumerate}
\item[(a)] With probability at least $1-e^{-C_1 m}-8m^{-1}$,
$$
\mathrm{dist}\left(\bm{x}_{k+1},\bm{x}^{\natural}\right)\le \alpha\cdot \mathrm{dist}\left(\bm{x}_k,\bm{x}^{\natural}\right),\quad\forall~k\geq 0.
$$
\item[(b)] With probability at least $1-e^{-C_1 m}-9m^{-1}$,
$$
\mathrm{dist}\left(\bm{x}_k,\bm{x}^{\natural}\right)=0,
\quad\forall~k> 2C_2\cdot\max\Big\{\log m,\log\big(\|\bm{x}^{\natural}\|_2/|x_{\min}^{\natural}|\big)\Big\}+C_3.
$$
\end{enumerate}
\end{theorem}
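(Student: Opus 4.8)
The plan is to derive Theorem~\ref{globalconvergence} by composing the spectral-initialization guarantee of Lemma~\ref{lemma:ini} with the local convergence established in Theorem~\ref{localconvergence}, joined by a union bound. First I would fix $\lambda_0\in(0,1)$ to be exactly the radius appearing in the hypothesis $\mathrm{dist}(\bm{x}_0,\bm{x}^{\natural})\le\lambda_0\|\bm{x}^{\natural}\|_2$ of Theorem~\ref{localconvergence}. With this $\lambda_0$ frozen, Lemma~\ref{lemma:ini} furnishes a constant $C_4=C_4(\lambda_0)$ such that, whenever $m\ge C_4 s^2\log n$, the initializer $\bm{x}_0$ produced by (Init-1)--(Init-2) obeys $\mathrm{dist}(\bm{x}_0,\bm{x}^{\natural})\le\lambda_0\|\bm{x}^{\natural}\|_2$ with probability at least $1-8m^{-1}$. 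Thus $\bm{x}_0$ lands inside the basin of attraction demanded by Theorem~\ref{localconvergence}.

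Next I would reconcile the two sampling requirements and assemble part~(a). Theorem~\ref{localconvergence} asks for $m\ge C_0 s\log(n/s)$, whereas the initialization asks for $m\ge C_4 s^2\log n$; since $s^2\log n\ge s\log(n/s)$ for all $s\ge 1$, taking $C_5:=\max\{C_0,C_4\}$ makes $m\ge C_5 s^2\log n$ imply both. Let $\mathcal{E}_{\mathrm{init}}$ be the event that the initialization succeeds and $\mathcal{E}_{\mathrm{conv}}$ the event on which Theorem~\ref{localconvergence}(a) delivers the contraction. On $\mathcal{E}_{\mathrm{init}}\cap\mathcal{E}_{\mathrm{conv}}$ the hypotheses of Theorem~\ref{localconvergence}(a) hold, so $\mathrm{dist}(\bm{x}_{k+1},\bm{x}^{\natural})\le\alpha\,\mathrm{dist}(\bm{x}_k,\bm{x}^{\natural})$ for every $k\ge 0$, and a union bound gives $\mathbb{P}(\mathcal{E}_{\mathrm{init}}\cap\mathcal{E}_{\mathrm{conv}})\ge 1-8m^{-1}-e^{-C_1 m}$, exactly the bound in part~(a).

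For part~(b) I would invoke Theorem~\ref{localconvergence}(b) with the specific choice $\beta=2$, which converts the failure term $m^{1-\beta}$ into $m^{-1}$ and yields exact recovery for all $k>C_2\max\{2\log m,\,L\}+C_3$ on an event of probability at least $1-e^{-C_1 m}-m^{-1}$, where I write $L:=\log(\|\bm{x}^{\natural}\|_2/|x_{\min}^{\natural}|)$. The elementary inequality $\max\{2\log m,L\}\le 2\max\{\log m,L\}$ then shows that the more conservative threshold $k>2C_2\max\{\log m,L\}+C_3$ stated in the theorem already guarantees $\mathrm{dist}(\bm{x}_k,\bm{x}^{\natural})=0$. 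Intersecting with $\mathcal{E}_{\mathrm{init}}$ and applying the union bound once more, the total failure probability is at most $e^{-C_1 m}+m^{-1}+8m^{-1}=e^{-C_1 m}+9m^{-1}$, as claimed.

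The one place that genuinely needs care---and which I expect to be the main obstacle---is that $\bm{x}_0$ is itself a function of the same vectors $\{\bm{a}_i\}$ that drive the iteration, so $\mathcal{E}_{\mathrm{init}}$ and $\mathcal{E}_{\mathrm{conv}}$ are not independent and cannot be manufactured from fresh samples. The resolution is to read Theorem~\ref{localconvergence} as a statement about the sensing matrix rather than about a prescribed starting point: its proof should produce a good event $\mathcal{E}_{\mathrm{conv}}$, measurable with respect to $\{\bm{a}_i\}$ alone, on which the contraction holds \emph{simultaneously} for every initial point in the ball $\mathrm{dist}(\cdot,\bm{x}^{\natural})\le\lambda_0\|\bm{x}^{\natural}\|_2$. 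Granting this uniform-in-initialization reading, the data-dependent $\bm{x}_0$ poses no difficulty, since on $\mathcal{E}_{\mathrm{init}}\cap\mathcal{E}_{\mathrm{conv}}$ the point $\bm{x}_0$ lies in that ball and the uniform guarantee applies to it. Beyond this, the argument is pure bookkeeping: reconciling the constants and applying the two union bounds above.
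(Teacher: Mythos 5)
Your proposal is correct and follows essentially the same route as the paper, which simply states that the theorem ``is a direct consequence of Lemma~\ref{lemma:ini} and Theorem~\ref{localconvergence} with $\beta=2$''; your bookkeeping of the constants, the inequality $\max\{2\log m,L\}\le 2\max\{\log m,L\}$, and the union bounds matches what that one-line proof implicitly relies on. Your observation about the data-dependence of $\bm{x}_0$ is well taken and correctly resolved: the good events in the paper's local analysis (the RIP events and the event of Lemma~\ref{bound:Ax}) are indeed uniform over all $s$-sparse points in the ball, so the composition is legitimate.
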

\begin{proof}
It is a direct consequence of Lemma~\ref{lemma:ini} and Theorem~\ref{localconvergence} with $\beta=2$.
\end{proof}

We see that the sampling complexity is $m\sim O(s^2\log n)$ for a recovery guarantee, same as most of existing sparse phase retrieval algorithms and bottlenecked by the initialization. When $s$ is small compared to $n$ (usually, $s\lesssim \sqrt{n}$), this sampling complexity is better than that in general phase retrieval. Therefore, using the sparsity of the underlying signal improves the sampling complexity.\par
In the noisy case $\bm{y}^{(\varepsilon)}=|\bm{A}\bm{x}^{\natural}|+\bm{\varepsilon}$, however, the local results in Corollary~\ref{localconvergence:noisy} can not be extended to global directly. The reason is that the theoretical results in Lemma~\ref{lemma:ini} is only applicable to noise $\bm{\varepsilon}$ that is sufficiently small and distributed according to a scaled sub-exponential random variable (see {\cite[Theorem IV.3]{jagatap2019sample}}). Thus, if combined with the spectral initialization and provided $m\sim O(s^2\log n)$, then the robustness result in Corollary~\ref{localconvergence:noisy} holds in a global sense for small sub-exponential additive noise.

\section{Numerical results and discussions}\label{section:experiments}
In this section, we present some numerical results of Algorithm \ref{alg:htp} and compare it with other existing sparse phase retrieval algorithms.

Throughout the numerical simulation, the target true signal $\bm{x}^{\natural}$ is set to be $s$-sparse whose support are uniformly drawn from all $s$-subsets of $\left\{1,2\cdots,n\right\}$ at random. The nonzero entries of $\bm{x}^{\natural}$ are generated as randn$(s,1)$ in the MATLAB. The sampling vectors $\{\bm{a}_i\}_{i=1}^m$ are i.i.d. random Gaussian vectors with mean $\bm{0}$ and covariance matrix $\bm{I}$. The clean measured data is $\{y_i\}_{i=1}^m$ with $y_i=|\bm{a}_i^T\bm{x}^{\natural}|$ for $i=1,\ldots,m$. The observed data is a noisy version of the clean data defined by
$$
y_i^{(\varepsilon)}=y_i+\sigma\varepsilon_i,\quad i=1,\ldots,m,
$$
where in the noise $\{\varepsilon_i\}_{i=1}^{m}$ are i.i.d. standard Gaussian, and $\sigma>0$ is the standard deviation of the noise. Thus the noise level is determined by $\sigma$.

Our algorithm HTP will be compared with some of the most recent and popular algorithms, including CoPRAM\cite{jagatap2019sample}, Thresholded Wirtinger Flow(ThWF)\cite{cai2016optimal} and SPARse Truncated Amplitude flow (SPARTA)\cite{wang2016sparse}, in terms of efficiency and sampling complexity. In all the experiments, the step size $\mu$ for HTP is fixed to be $0.75$. For SPARTA, the parameters are set to be $\gamma=0.7,\mu=1,\lv\I\rv=\lfloor m/6\rfloor$. The numerical simulation are run on a laptop with $2.6$ GHz quad-core i$7-6700$HQ processor and $8$ GB memory using MATLAB R$2020$a. The relative error between the true signal $\bm{x}^{\natural}$ and its estimation $\hat{\bm{x}}$ is defined by
$$
r\left(\hat{\bm{x}},\bm{x}^{\natural}\right)= \frac{\mathrm{dist}(\hat{\bm{x}},\bm{x}^{\natural})}{\Vert \bm{x}^{\natural}\Vert_2}.
$$
In the experiments, a successful recovery is defined to be in case $r\left(\hat{\bm{x}},\bm{x}^{\natural}\right)\le 10^{-3}$ where $\hat{\bm{x}}$ is the output of an algorithm. Also, recall that in this paper $\log r$ is the logarithm of $r$ to the base $e$ (which is named log relative error in the description of $y$-axis in all the figures). For example, the number $-7$ in the $y$-axis represents $\log r=-7$ or equivalently the relative error $r=e^{-7}$.
\paragraph{Number of iterations and finite-step convergence.}
We test the number of iterations required for the proposed HTP algorithm, by the following three experiments. In the first experiment, we fix the sparsity $s=20$ and the sample size $m=3000$, and let the signal dimension vary as $n=3000,5000, 10000, 20000$. The relative error are obtained by averaging the result of $100$ independent trial runs, and the experimental results are  plotted in Figure~\ref{fig:iterno}. We see that the number of iterations required by our HTP algorithm is very few. More interestingly, we see clearly that the relative error suddenly jumps to almost $0$ after very few iterations, suggesting that our HTP algorithm enjoys an exact recovery in finite steps as predicted by Part (b) of Theorem \ref{localconvergence}. Furthermore, in Figure~\ref{fig:iterno}, the number of iterations for exact recovery does not grow conspicuously while we increase the signal dimension. In the second experiment, we fix the signal dimension and the number of samples $m=n=10000$, and let the sparsity vary $s=10,20,\ldots,100$. Table~\ref{tab:iterno} shows the average maximum number of iteration required for convergence in $100$ independent trial run. We see that the number of iterations needed grows very slowly. Recall Part (b) of Theorem \ref{localconvergence} states that the number of iteration required is $O(\log m+\log(\|\bm{x}^{\natural}\|_2/|x_{\min}^{\natural}|))$. Since the number of $m$ is fixed, this slow growth might due to the growth of $\log(\|\bm{x}^{\natural}\|_2/|x_{\min}^{\natural}|)$ according to our random model generating $\bm{x}^{\natural}$. In the third experiment, we demonstrate the effect of the sample size $m$ on the number of iterations required. We fix the underlying signal dimension $n=10000$ and sparsity $s=20$, and let $m$ vary from $2000$ to $10000$ at every $1000$. The results are given in Table~\ref{tab:iterno2}. We see that the number of iterations even decays very slowly with respect to $m$, suggesting the dependency on $m$ in Part (b) in Theorem \ref{localconvergence} is somewhat conservative.

\begin{figure}[!htb]
\centering
\subfigure[Dimension $n=3000$]{\includegraphics[trim =0.5cm 0cm 0.5cm 0cm,clip=true,width=0.33\textwidth]{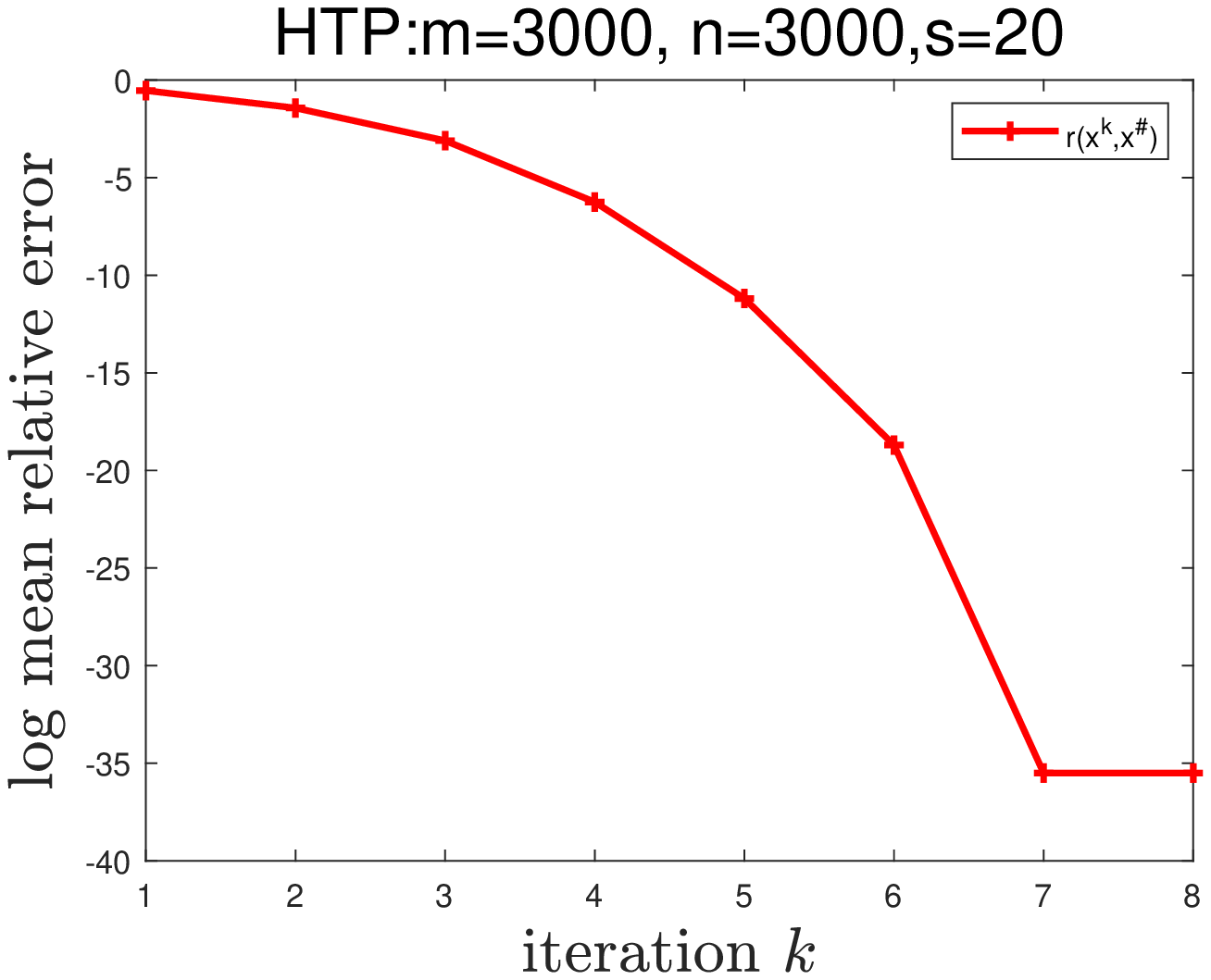}}
\subfigure[Dimension $n=5000$]{\includegraphics[trim =0.5cm 0cm 0.5cm 0cm,clip=true,width=0.33\textwidth]{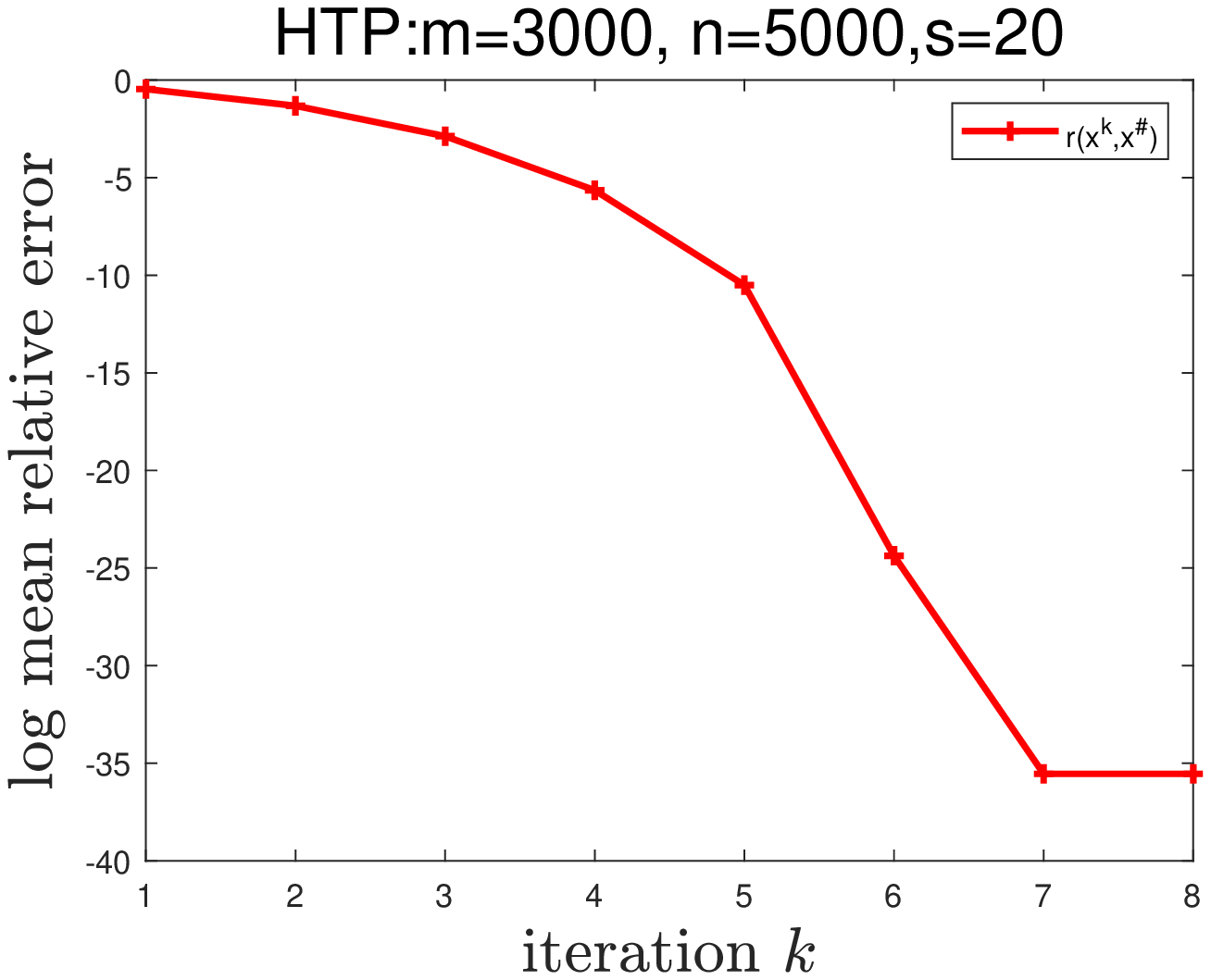}}
\subfigure[Dimension $n=10000$]{\includegraphics[trim =0.5cm 0cm 0.5cm 0cm,clip=true,width=0.33\textwidth]{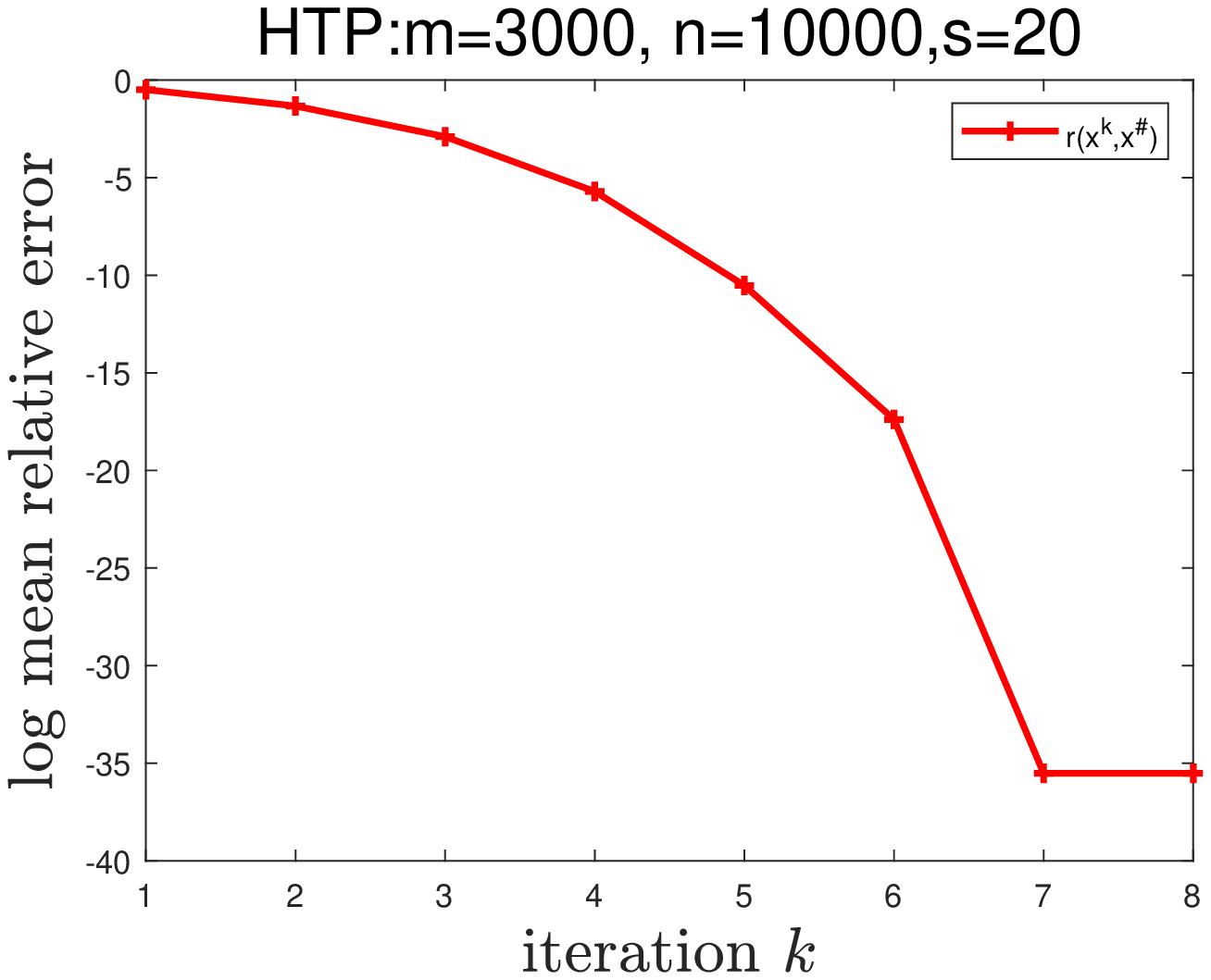}}
\subfigure[Dimension $n=20000$]{\includegraphics[trim =0.5cm 0cm 0.5cm 0cm,clip=true,width=0.33\textwidth]{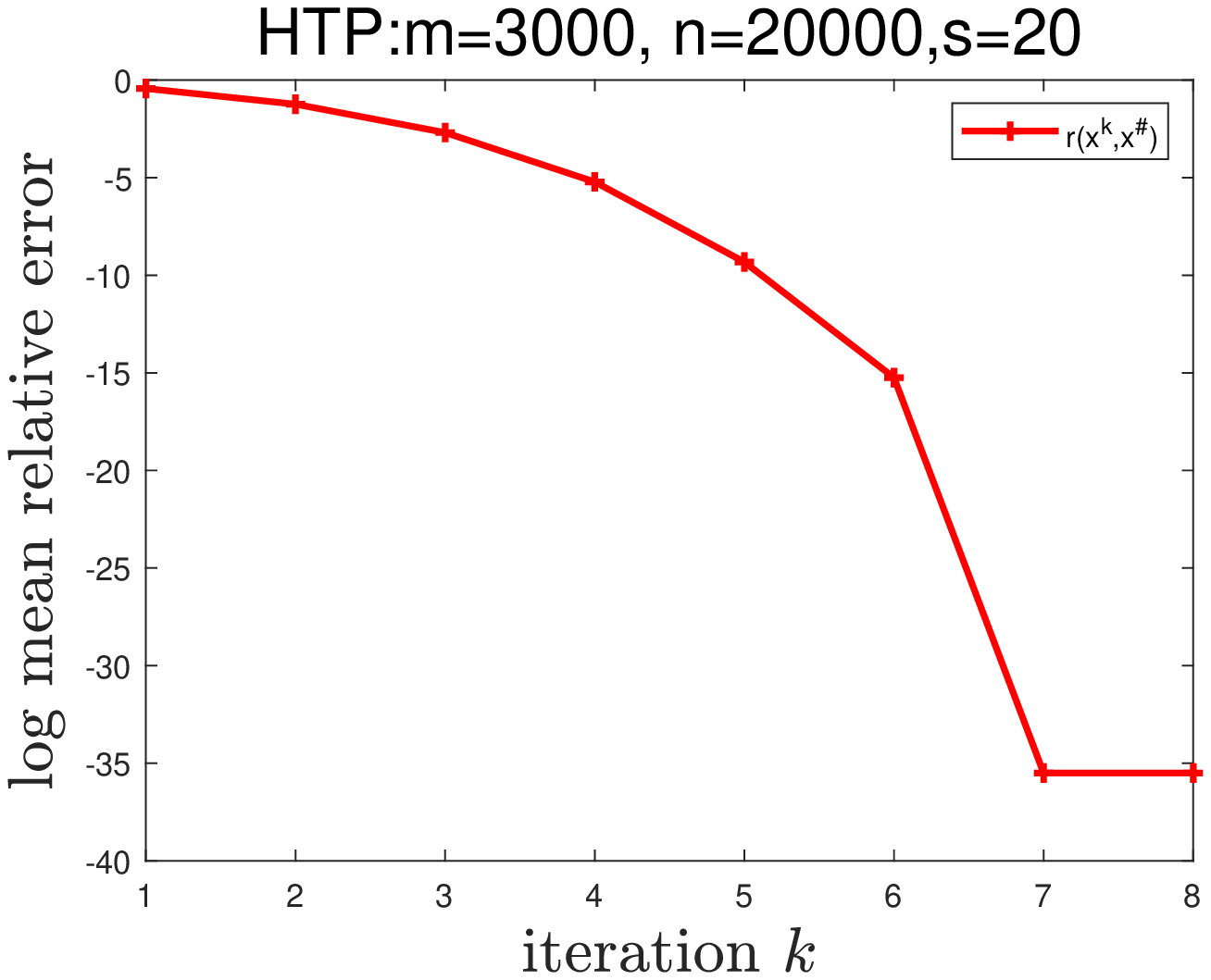}}
\caption{\label{fig:iterno}Log mean relative error vs. the iteration number $k$ for various signal dimensions. The sparsity $s$ and the sample size $m$ are fixed $s=20$ and $m=3000$ respectively. From (a) to (d): the signal dimension $n$ is set to be $3000, 5000, 10000, 20000$, respectively. The mean relative error are obtained by averaging $100$ independent trial runs.}
\end{figure}
\begin{table}[htb!]
\centering
\caption{Effect of the sparsity $s$ on the number of iterations required. The signal dimension $n$ and the sampling size $m$ are fixed $n=m=10000$. The Max iteration no. are obtained by the maximum number of iterations required for convergence ($r\left(\hat{\bm{x}},\bm{x}^{\natural}\right)\le 10^{-10}$) in $100$ independent trial runs.}\label{tab:iterno}

\begin{tabular}{cccccccccccc}
\hline\hline
   &Sparsity $s$   &10   &20   &30   &40  &50  &60  &70   &80 &90  &100  \\
 \hline
&Max iteration no.  &5    &6   &6    &6   &7    &7   &7   &7    &8  &8   \\
\hline\hline
\end{tabular}
\end{table}
\begin{table}[htb!]
\centering
\caption{Effect of the sample size $m$ on the number of iterations required. The signal dimension $n$ and the sparsity $s$ are fixed to be $n=10000$ and $s=20$. The Max iteration no. are obtained by the maximum number of iterations required for convergence ($r\left(\hat{\bm{x}},\bm{x}^{\natural}\right)\le 10^{-10}$) in $100$ independent trial runs.}\label{tab:iterno2}
\begin{tabular}{cccccccccccc}
\hline\hline
   &Sample size $m$  &2000   &3000   &4000   &5000  &6000  &7000  &8000   &9000 &10000   \\
 \hline
&Max iteration no.   &8      &7        &6     &6     &6     &6     &6      &6     &6   \\
\hline\hline
\end{tabular}
\end{table}

\paragraph{Running time comparison.}
We compare our HTP algorithm with existing sparse phase retrieval algorithms, including CoPRAM, HTP, ThWF and SPARTA, in terms of running time. The signal dimension $n$ is fixed to be $3000$. The comparison are demonstrated by two experiments. In the first experiment, the sparsity $s$ is set to be $20,30$, and the sample size $m$ is fixed to be $2000$ to ensure a high successful recovery rate (see Figure~\ref{PhaseTranss30}). The noise level $\sigma$ is set to be $0,0.01,0.05$ respectively. We plot in Figure~\ref{fig:itertime} the results of averages $100$ trial runs with those fail trials ignored. In the second experiment, the sparsity $s$ is set to be $10,20,30,40$ respectively. In Figure~\ref{cputime}, we plot the running time required for a successful signal recovery (in the sense of $r\left(\hat{\bm{x}},\bm{x}^{\natural}\right)\le 10^{-3}$). All the mean value are obtained by averaging $100$ independent trial runs with those failing trials filtered out. From both figures, we see that our proposed HTP algorithm is the fastest among all tested algorithms, and it is at least 2 times faster than other algorithms to achieve a recovery of relative error $10^{-3}$. Furthermore, the running time of our HTP algorithm grows the least with respect to $m$ among all tested algorithms.

\begin{figure}[!htb]
\centering
\subfigure[Noise free, sparsity $s=20$]{\includegraphics[clip=true,width=0.35\textwidth]{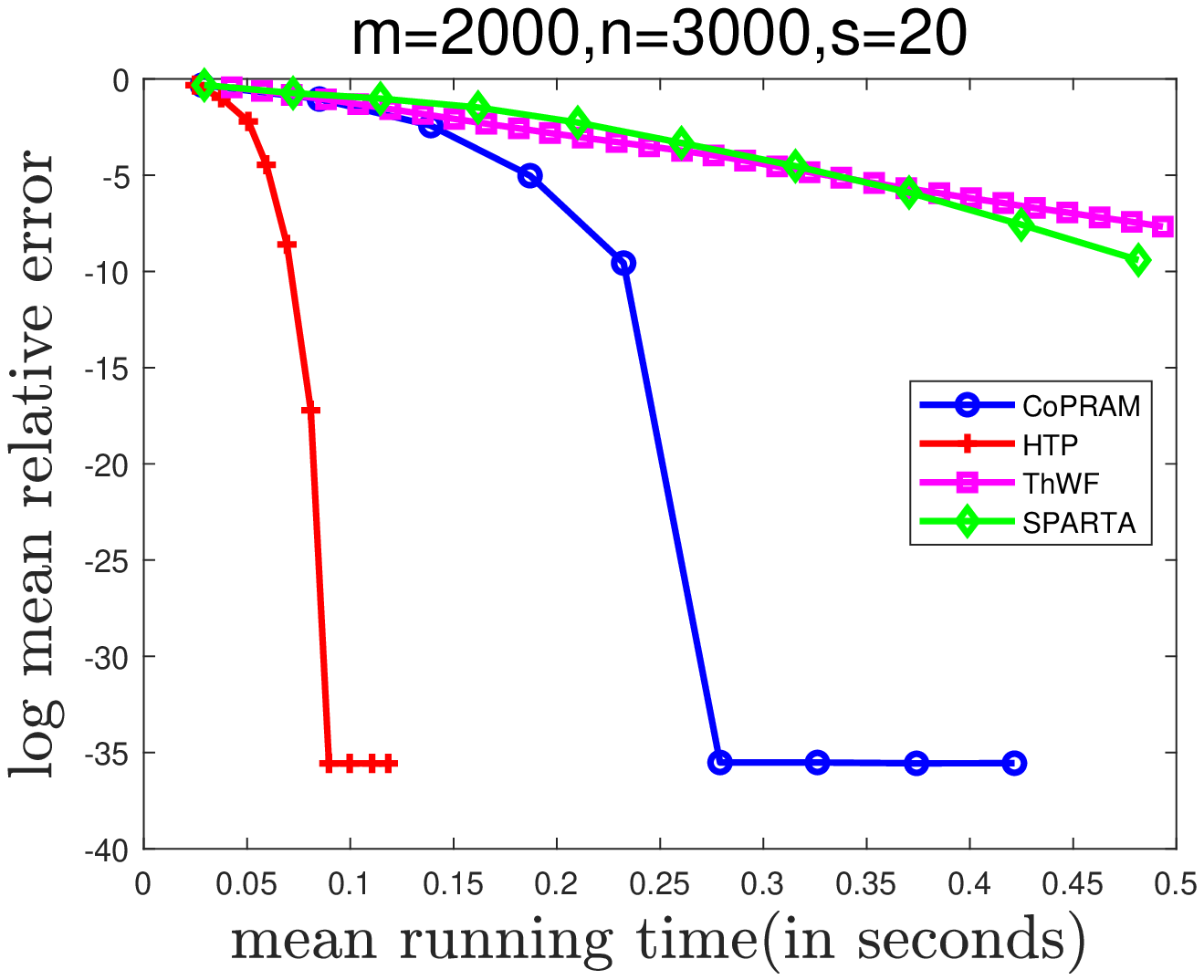}}
\subfigure[Noise free, sparsity $s=30$]{\includegraphics[clip=true,width=0.35\textwidth]{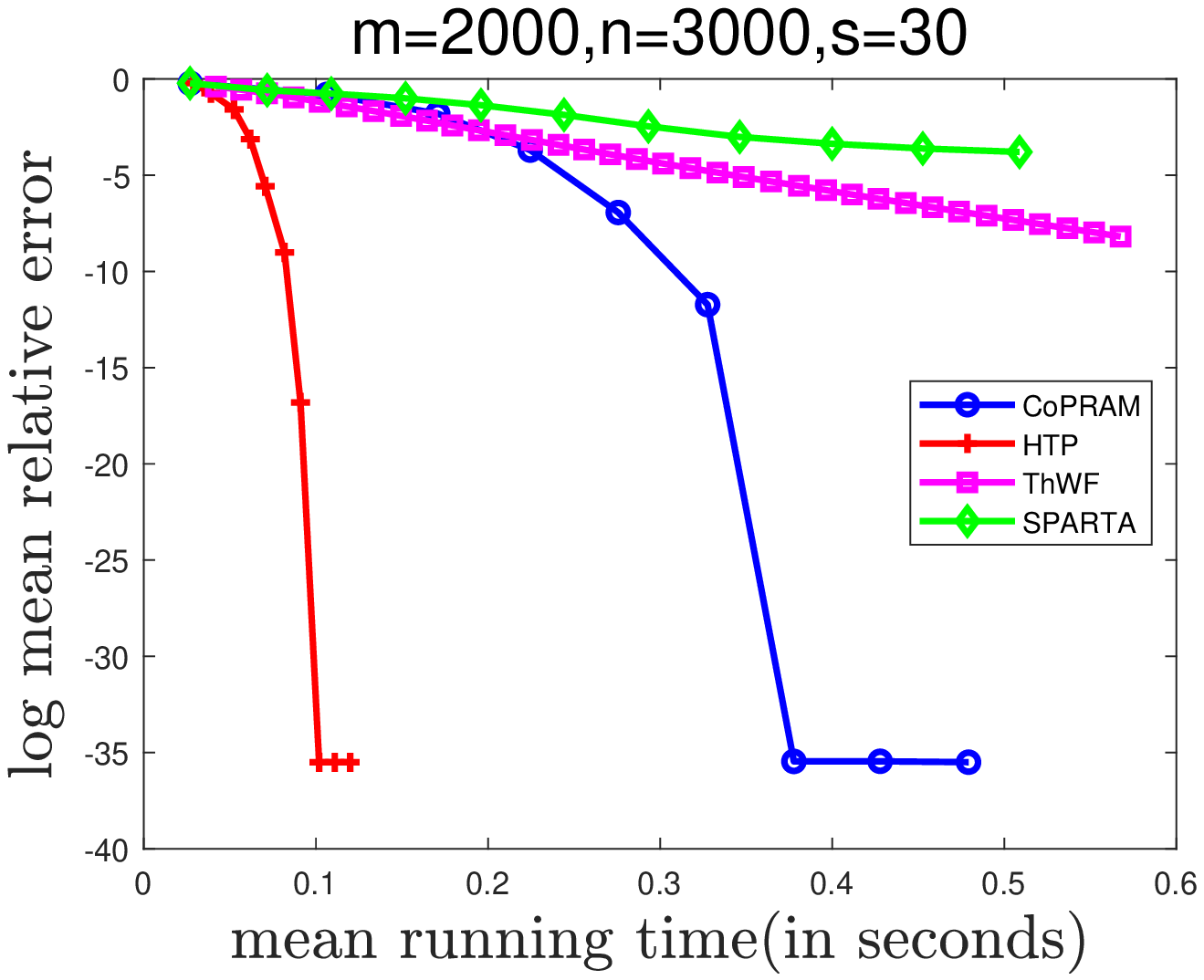}}
\subfigure[Noise level $\sigma=0.01$, sparsity $s=20$]{\includegraphics[clip=true,width=0.35\textwidth]{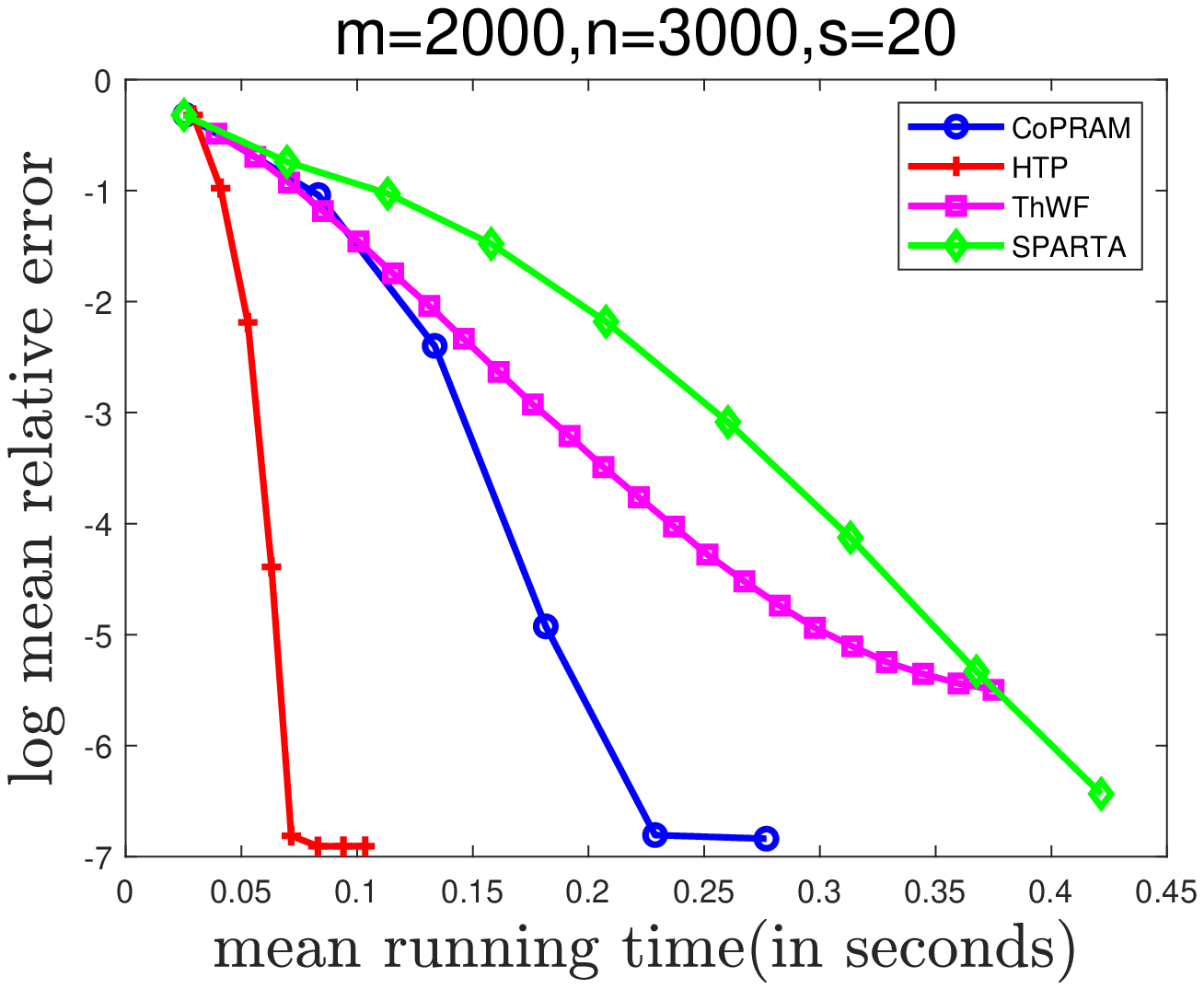}}
\subfigure[Noise level $\sigma=0.01$, sparsity $s=30$]{\includegraphics[clip=true,width=0.35\textwidth]{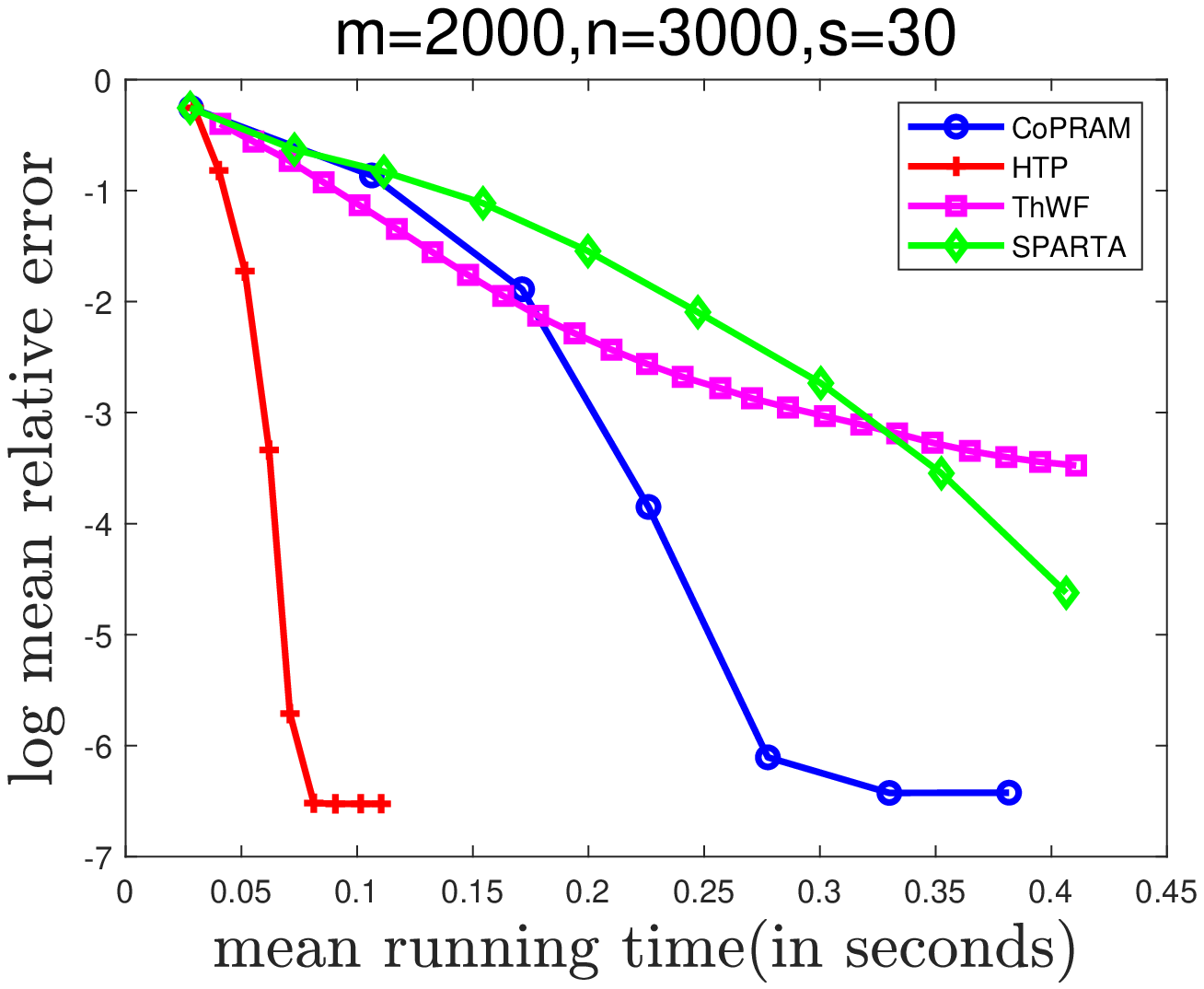}}
\subfigure[Noise level $\sigma=0.05$, sparsity $s=20$]{\includegraphics[clip=true,width=0.35\textwidth]{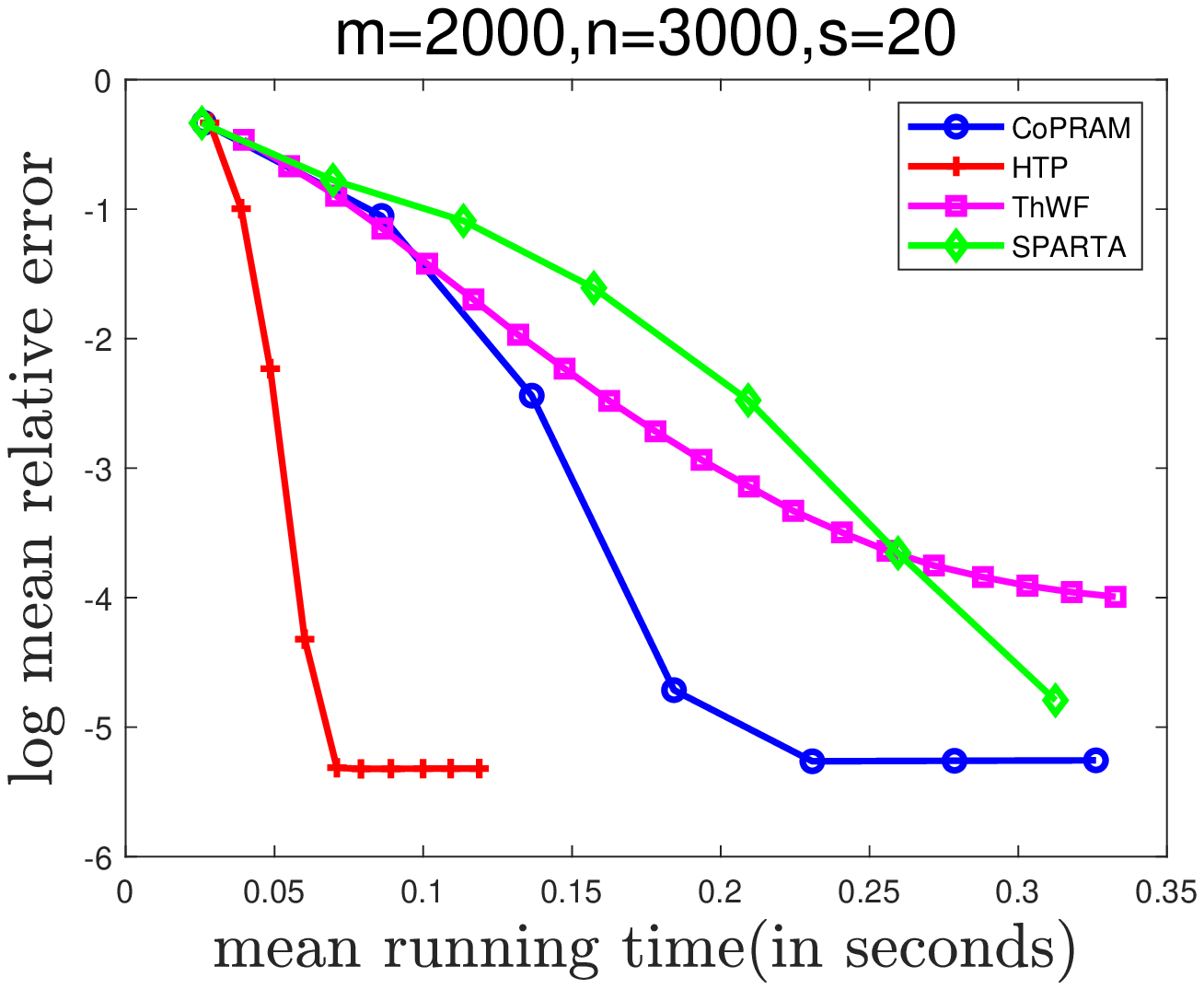}}
\subfigure[Noise level $\sigma=0.05$, sparsity $s=30$]{\includegraphics[clip=true,width=0.35\textwidth]{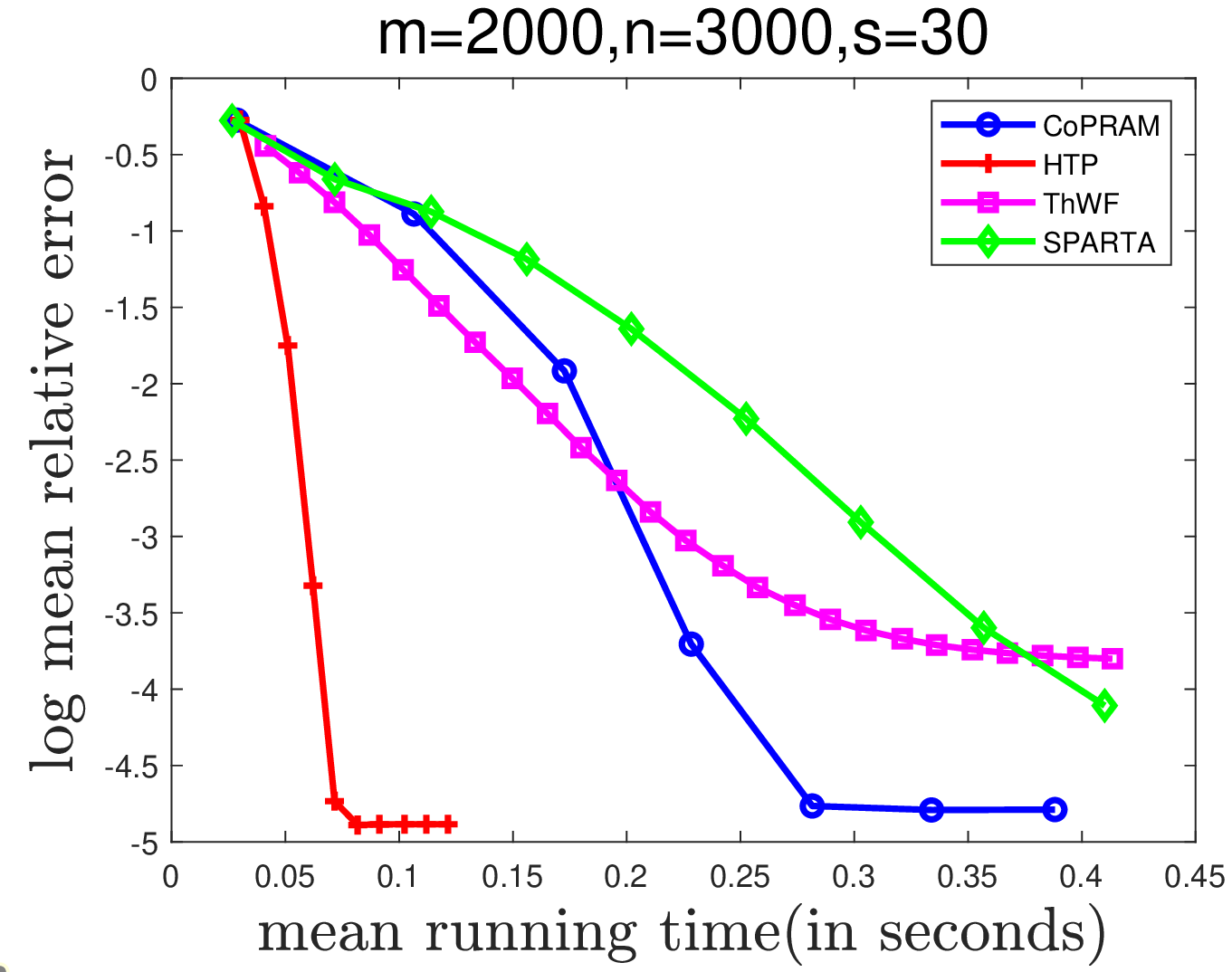}}
\caption{\label{fig:itertime}Log mean relative error vs. mean running time for different algorithms. The signal dimension $n$ and the sample size $m$ are fixed to be $n=3000, m=2000$. From left to right: the sparsity is set to be $s=20$ and $s=30$ respectively. From top to bottom: the noise level $\sigma$ is set to be $\sigma=0$, $\sigma=0.01$, and $\sigma=0.05$ respectively. The results are averages of the corresponding results of $100$ independent trial runs, ignoring the fail trials.}
\end{figure}

\begin{figure}[!htb]
\centering
\subfigure[Sparsity s=10]{\includegraphics[clip=true,width=0.39\textwidth]{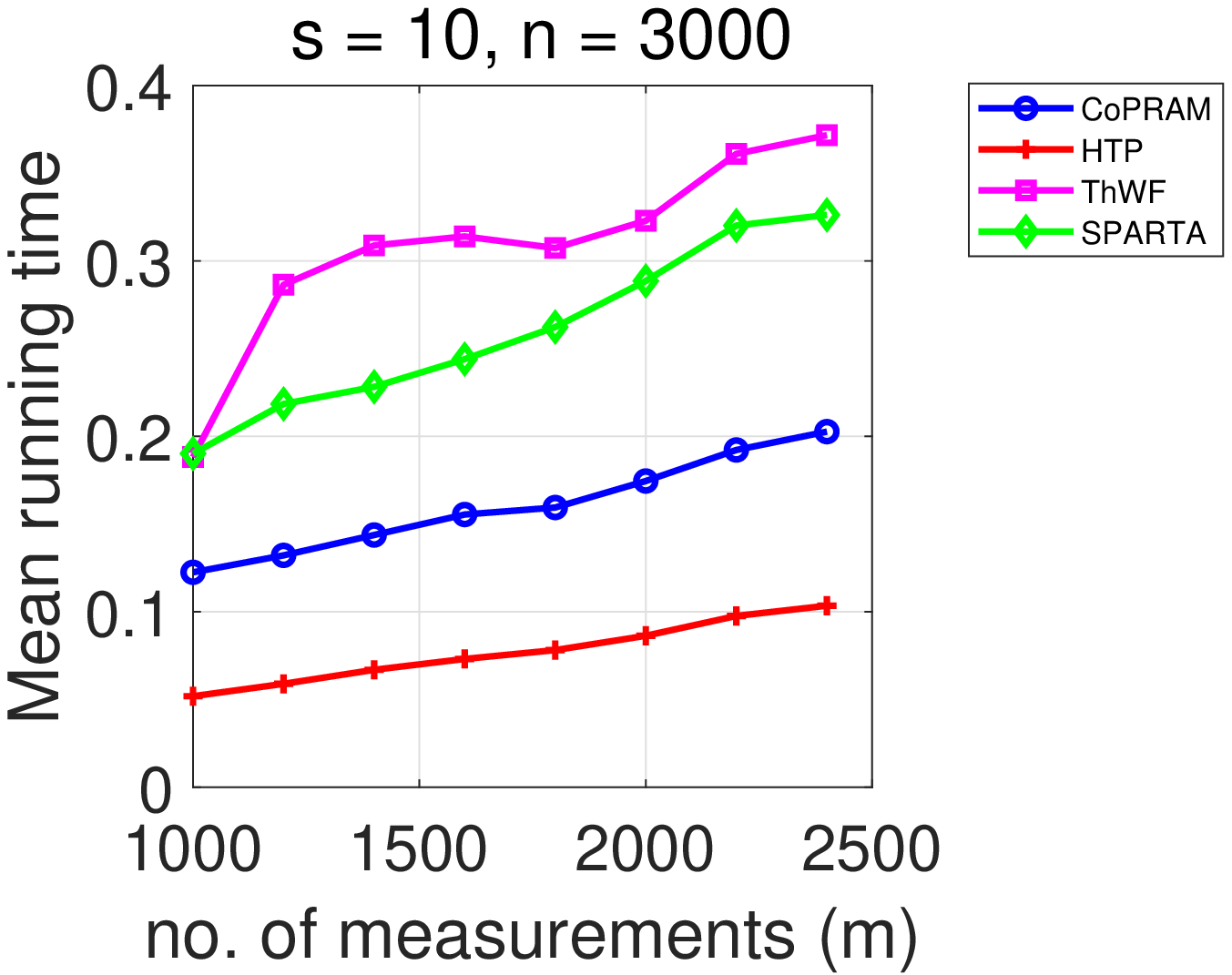}}
\subfigure[Sparsity s=20]{\includegraphics[clip=true,width=0.39\textwidth]{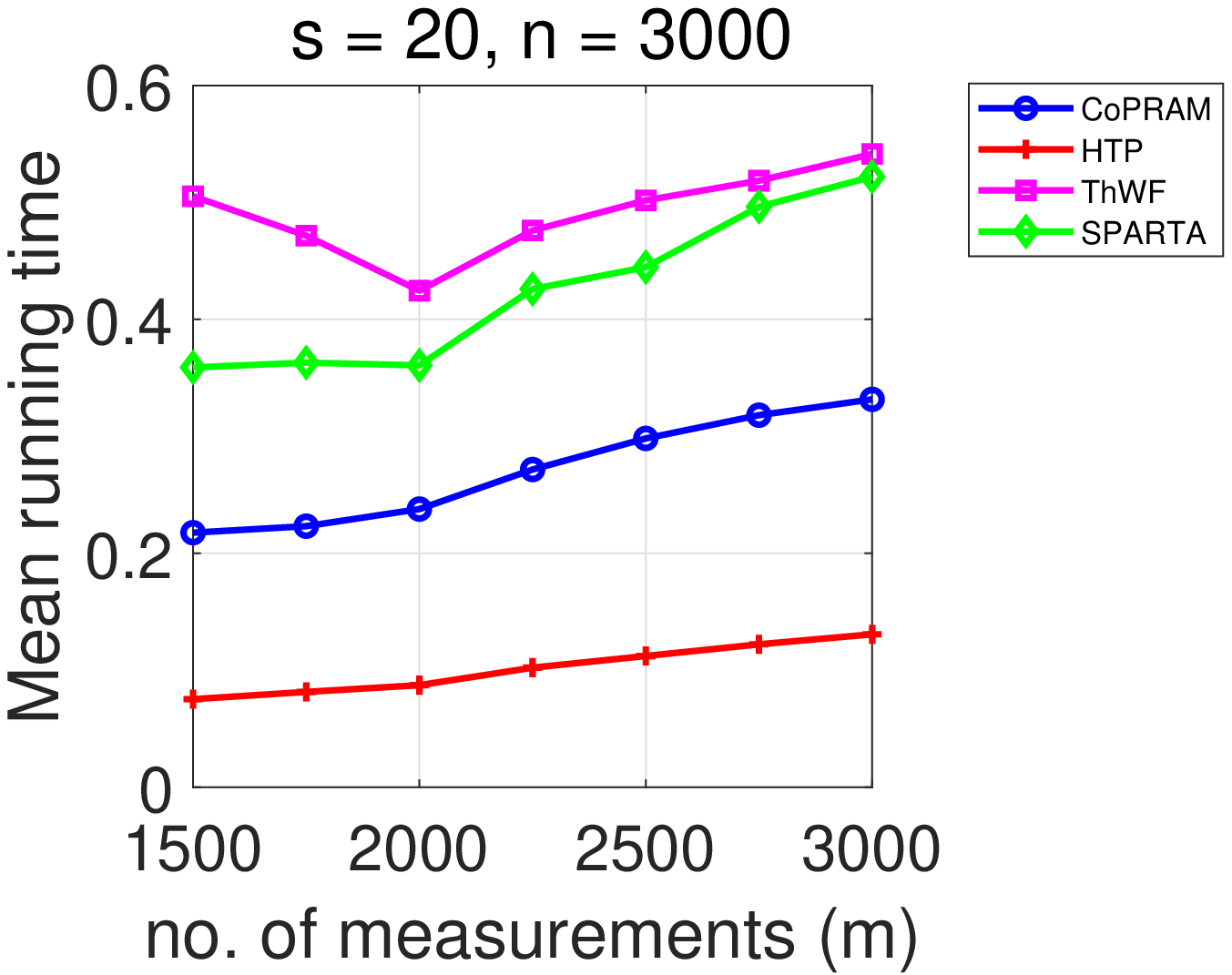}}
\subfigure[Sparsity s=30]{\includegraphics[clip=true,width=0.39\textwidth]{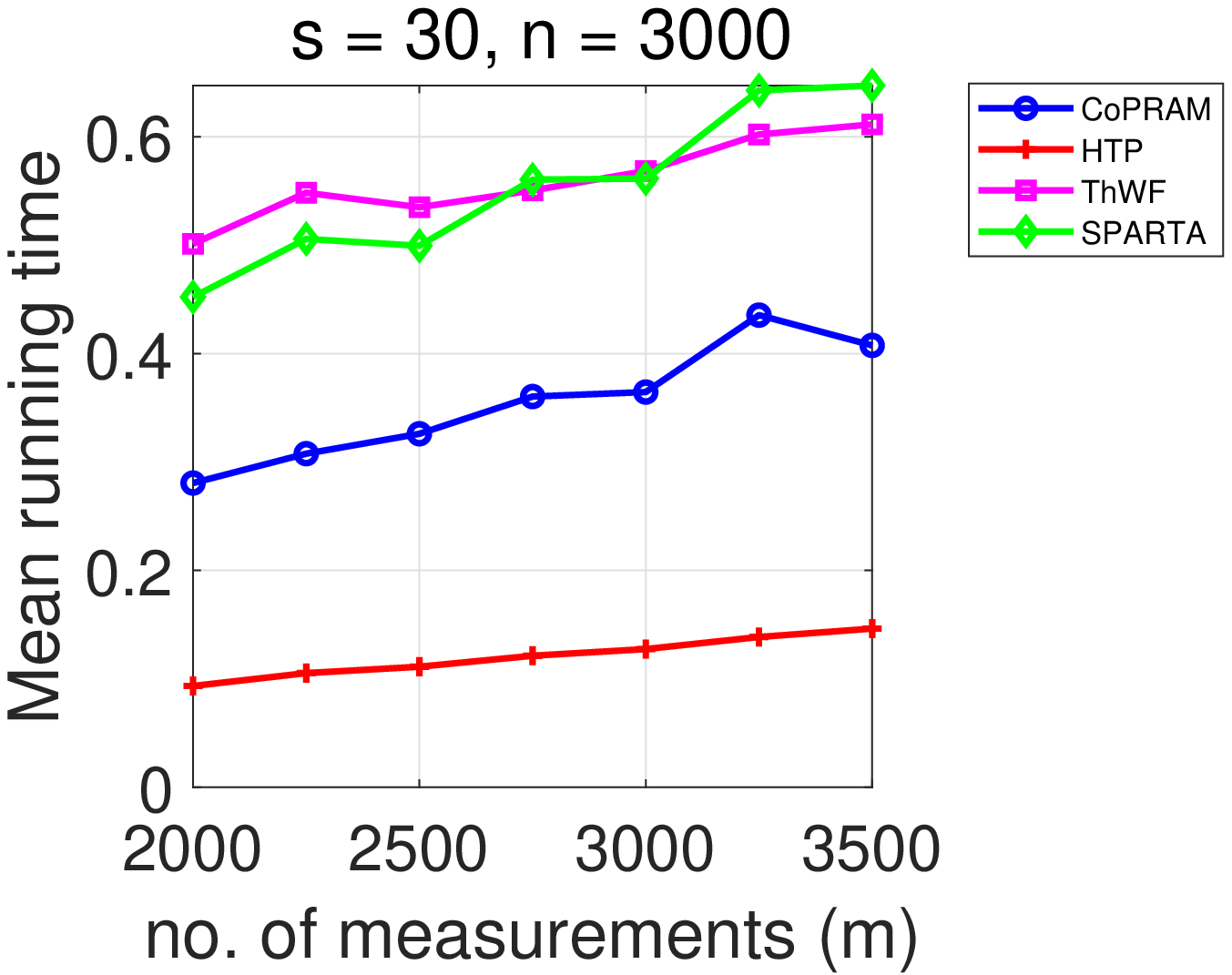}}
\subfigure[Sparsity s=40]{\includegraphics[clip=true,width=0.39\textwidth]{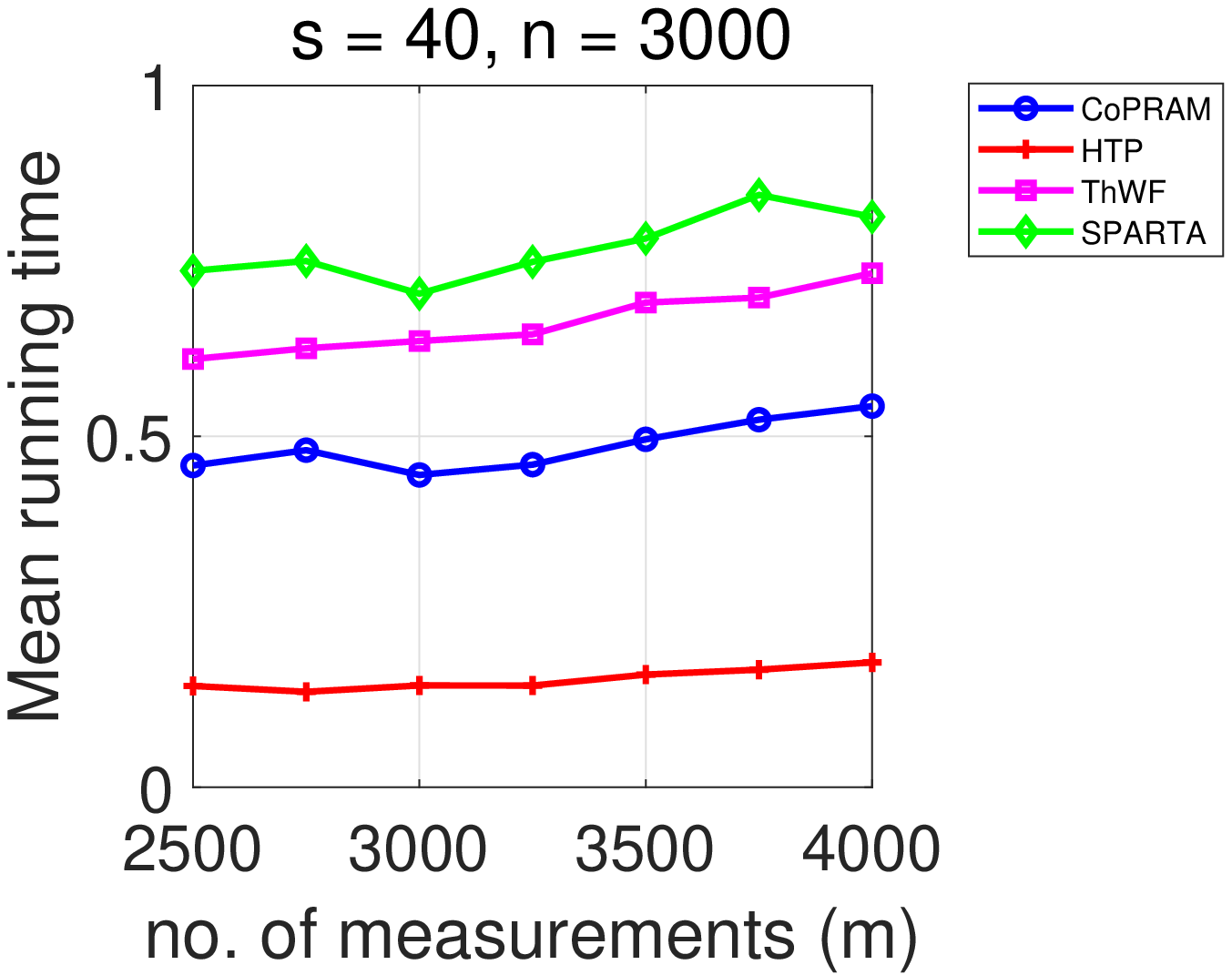}}
\caption{\label{cputime}Mean running time (in seconds) required for a successful recovery ($r\left(\hat{\bm{x}},\bm{x}^{\natural}\right)\le 10^{-3}$) with various sample size for different algorithms. The signal dimension is fixed to be $n=3000$. From (a) to (d): the sparsity $s$ is set to be $10, 20, 30, 40$, respectively. The results are obtained by averaging $100$ independent trial runs with those fail trials filtered out.}
\end{figure}

\paragraph{Robustness to noise.}
We show the effect of noise on the recovery error. In this experiment, we choose $n=3000$, $m=2000$, $s=30$, and we test our HTP algorithm under different noise level of the observed measured data. We plot the mean relative error by our algorithm against the signal-to-noise ratios of the measured data in Figure~\ref{noiseRobust}. The mean relative error are obtained by averaging $100$ independent trial runs. We see that the log relative error decays almost linearly with respect to the SNR in dB, suggesting that the recovery error of the HTP algorithm is controlled by a constant times the noise level in $\bm{y}$. Therefore, our proposed algorithm is robust to noise contained in the observed phaseless measured data.

\begin{figure}[!htb]
\centering
{\includegraphics[trim =0cm 0cm 1.0cm 0cm,clip=true,width=0.36\textwidth]{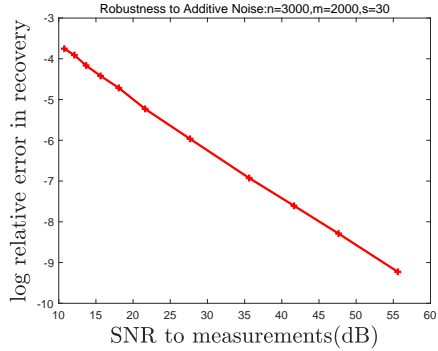}}
\caption{\label{noiseRobust} Robustness to additive Gaussian noise. We set $n=3000, m=2000,s=30$. The $y$-axis is the log mean relative error in the recovery by HTP, and the $x$-axis is the signal-to-noise ratios (SNR) of the measurements data. The results are obtained by averaging $100$ independent trial runs.}
\end{figure}

\paragraph{Phase transition.}
Now we show the phase transition of our HTP algorithm and compare it with other algorithms. In this experiment, we fix the signal dimension $n=3000$. First, for the sparsity $s=20$ and $s=30$, the successful recovery rate are shown in Figure~\ref{PhaseTranss30} when the sample size $m$ vary from $250$ to $3000$. Moreover, Figure \ref{PhaseTrans} depicts the success rate of different algorithms with different sparsities $s$ and different sample sizes $m$: the sparsity $s$ shown in the $y$-axis vary from $10$ to $80$ with grid size $5$, and sample size $m$ shown in the $x$-axis vary from $250$ to $3000$ with grid size $250$. In the figure, the grey level of a block means the successful recovery rate: black means $0\%$ successful reconstruction, white means $100\%$ successful reconstruction, and grey means a successful reconstruction rate between $0\%$ and $100\%$. The successful recovery rates are obtained by $100$ independent trial runs. From the figure, we see that our HTP algorithm, CoPRAM, and SPARTA have similar phase transitions while all algorithms are comparable. For smaller $s$, HTP, CoPRAM, and SPARTA are slightly better than ThWF. For larger $s$, ThWF is slightly better than the others.

\begin{figure}[!htb]
\centering
\subfigure[$s=20$]{\includegraphics[clip=true,width=0.38\textwidth]{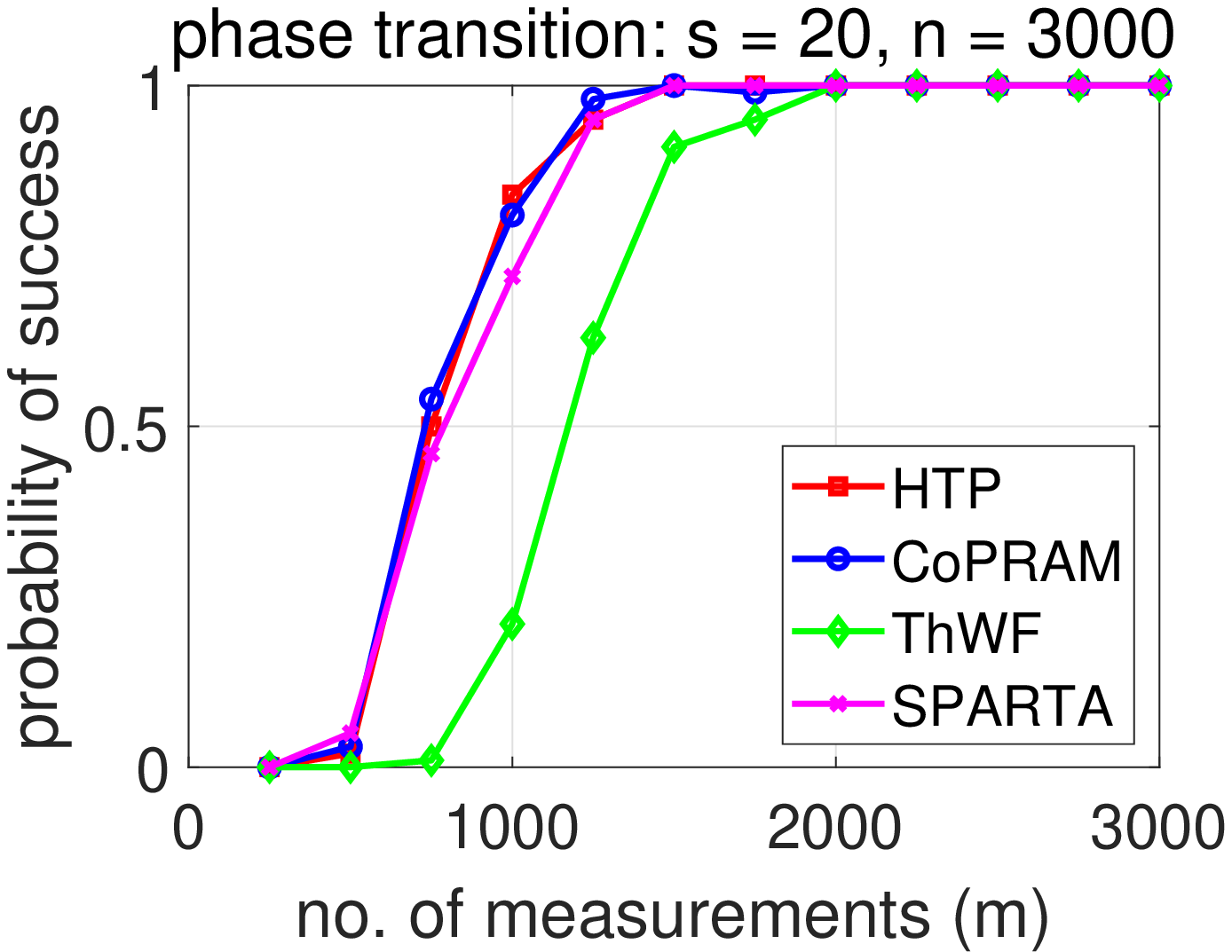}}
\subfigure[$s=30$]{\includegraphics[clip=true,width=0.38\textwidth]{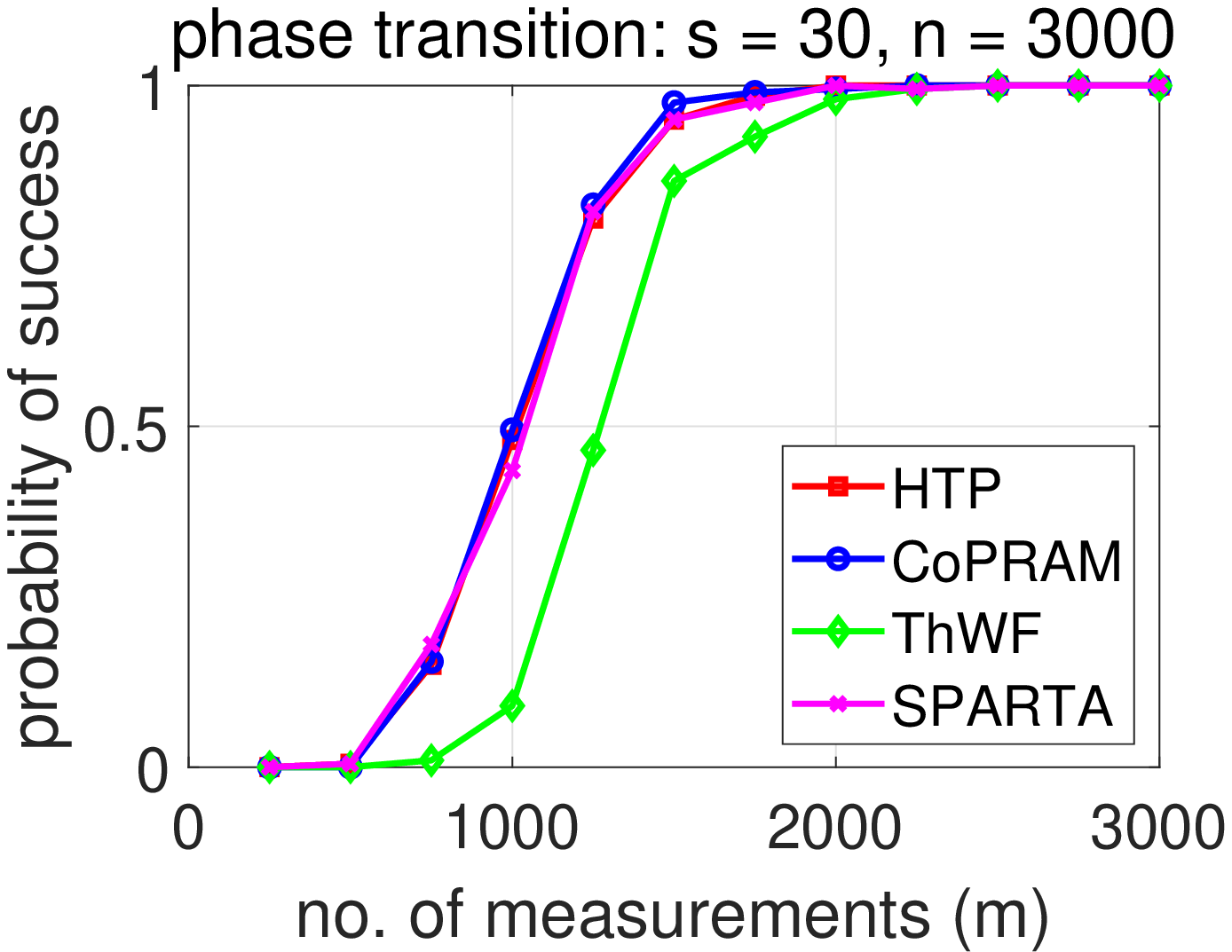}}
\caption{\label{PhaseTranss30}Phase transition for different algorithms with signal dimension $n=3000$ and $s=20,30$.}
\end{figure}
\begin{figure}[!htb]
\centering
\subfigure[ThWF]{\includegraphics[trim =1.5cm 0cm 1.5cm 0cm,clip=true,width=0.24\textwidth]{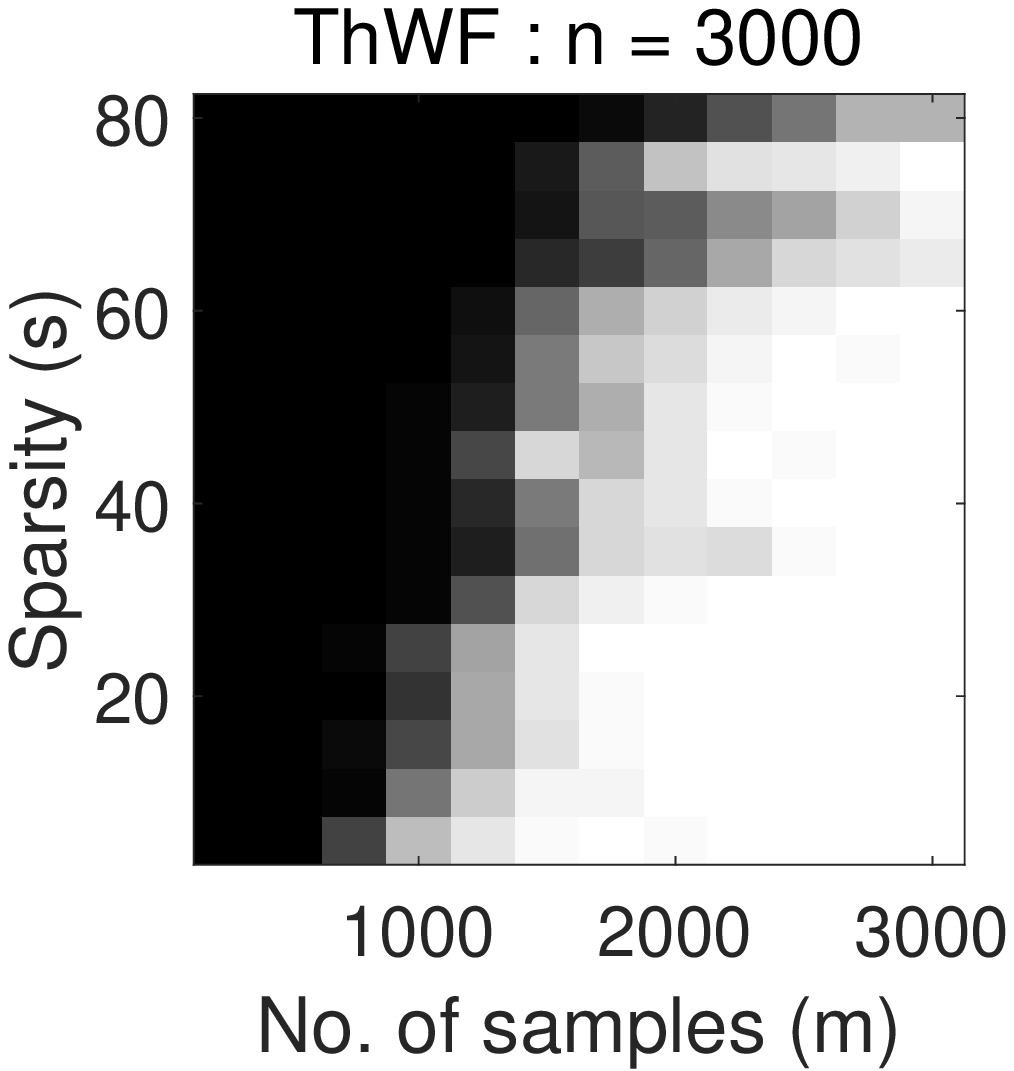}}
\subfigure[CoPRAM]{\includegraphics[trim =1.5cm 0cm 1.5cm 0cm,clip=true,width=0.24\textwidth]{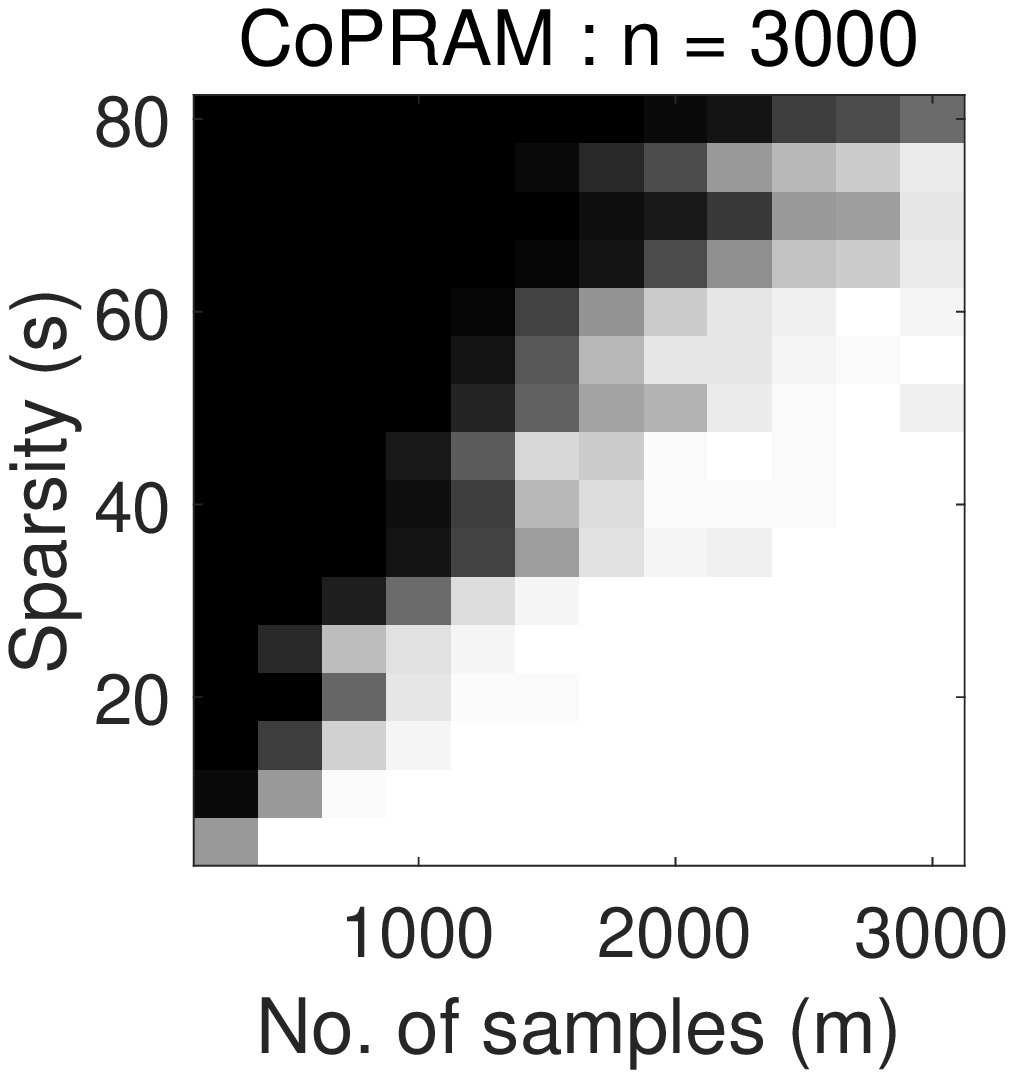}}
\subfigure[SPARTA]{\includegraphics[trim =1.5cm 0cm 1.5cm 0cm,clip=true,width=0.24\textwidth]{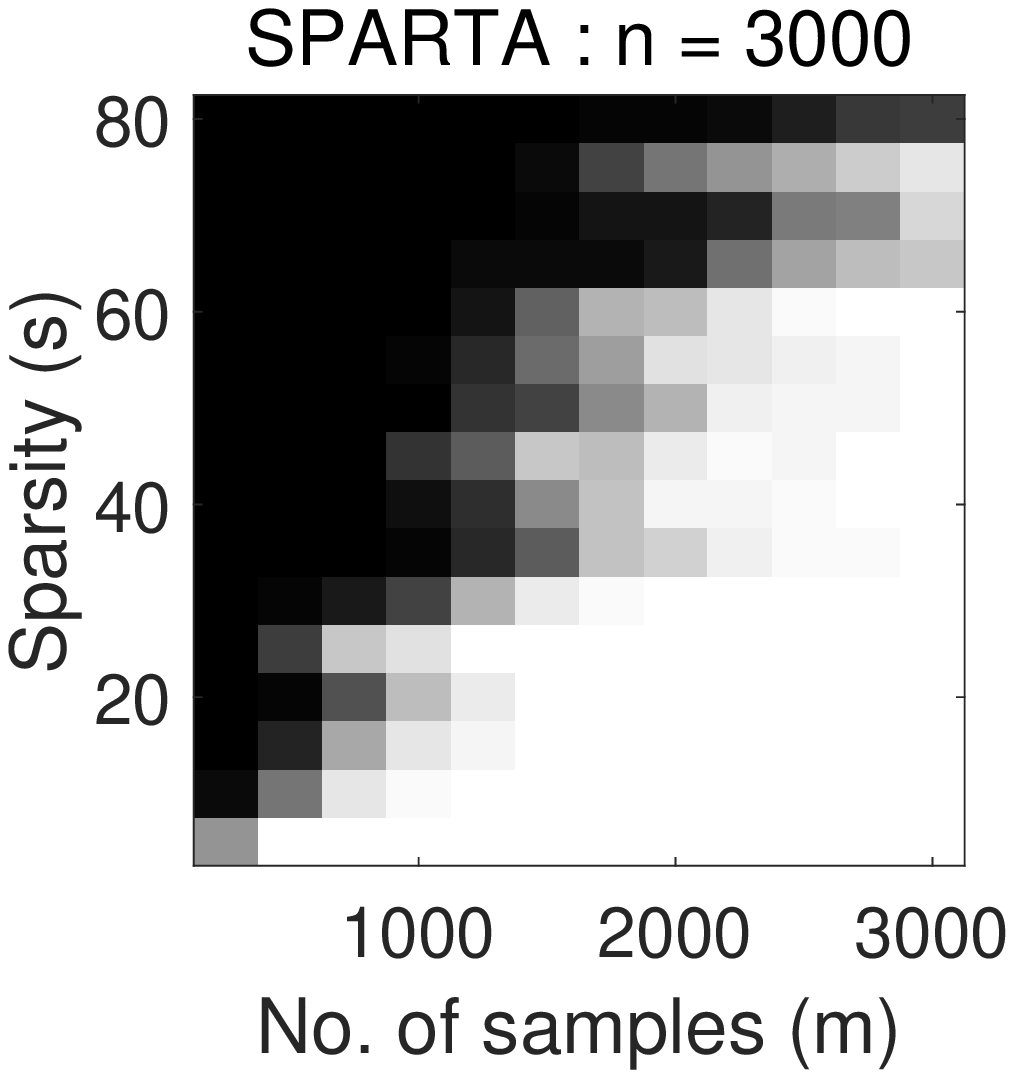}}
\subfigure[HTP]{\includegraphics[trim =1.5cm 0cm 1.5cm 0cm,clip=true,width=0.24\textwidth]{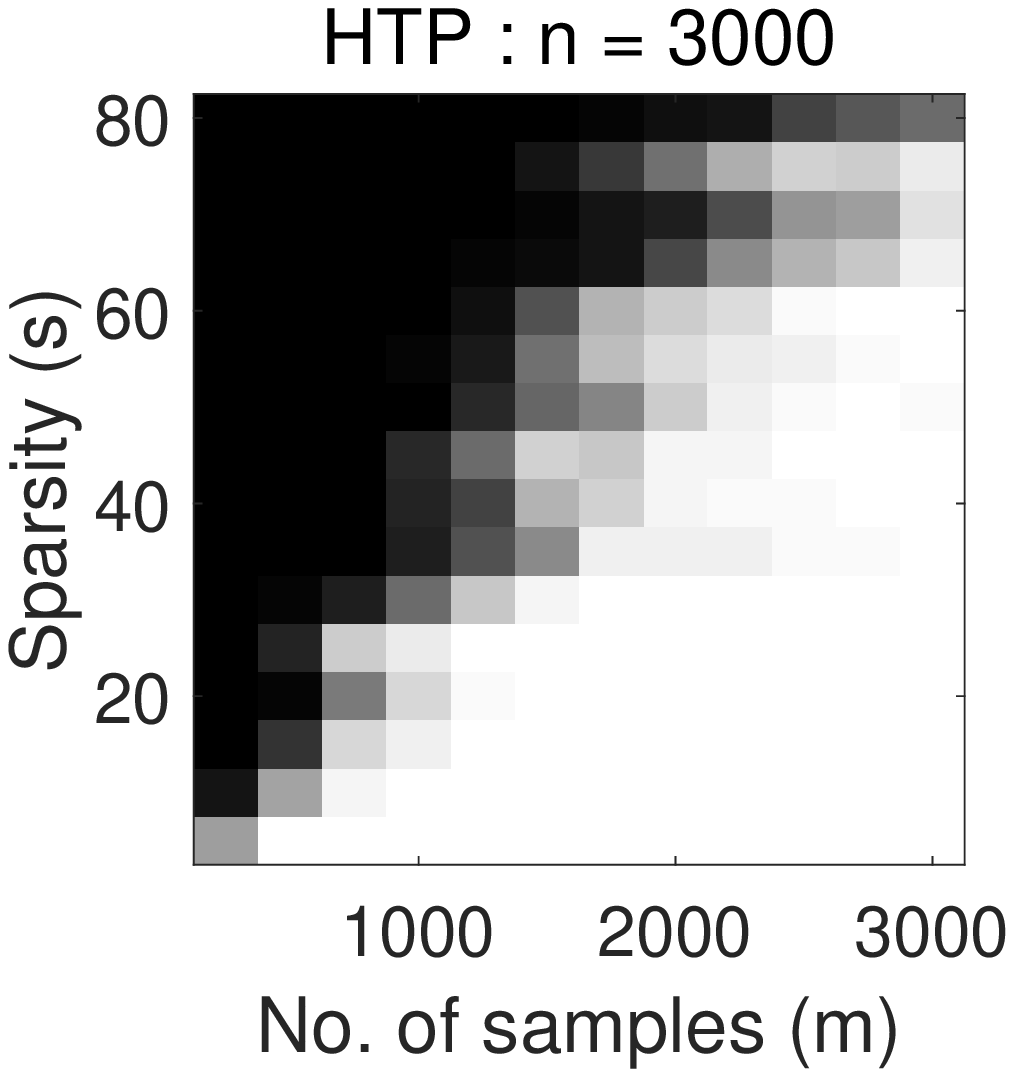}}
\caption{\label{PhaseTrans}Phase transition for different algorithms with signal dimension $n=3000$. The successful recovery rates are depicted in different grey levels of the corresponding block. Black means that the successful recovery rate is $0\%$, white $100\%$, and grey between $0\%$ and $100\%$.}
\end{figure}

\paragraph{$1$-D signal reconstruction.}
Now we recovery an one dimensional signal (in Figure~\ref{1dtruesignal}) from phaseless measurements using different methods. The sampling matrix $\bm{A}$ is of size $2800 \times 8000$ and it consists of a random Gaussian matrix and an inverse wavelet transform (with four level of Daubechies $1$ wavelet). The one-dimensional signal is sparse ($37$ nonzeros) under the wavelet transformation. The noise level in the measurements is $\sigma=0.05$. We do the reconstruction from phaseless noisy measurements by methods including HTP, CoPRAM, ThWF and SPARTA. In the numerical experiment, the exact sparsity level is assumed to be unknown and $s$ is set to be $\lfloor 0.01n\rfloor$ for reconstruction. The PSNR values is defined as $$\mathrm{PSNR}=10\cdot \log\frac{\mathrm{V}^2}{\mathrm{MSE}},$$
where $\mathrm{V}$ is the maximum absolute value of the true signal and the recovered signal, and $\mathrm{MSE}$ is the mean squared error in the reconstruction. The results are shown in Figure~\ref{1dsignalrecover}. We see that our proposed HTP algorithm is the fastest among all algorithms.
\begin{figure}[!htb]
\centering
{\includegraphics[trim =1.5cm 1cm 1.5cm 0cm,clip=true,width=0.25\textwidth]{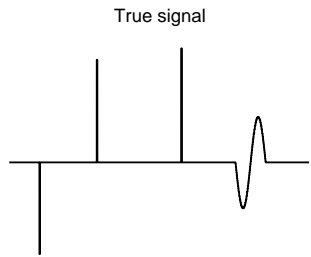}}
\caption{\label{1dtruesignal} The target one-dimensional signal.}
\end{figure}
\begin{figure}[!htb]
\centering
{\includegraphics[trim =1.5cm 1.5cm 1cm 0cm,clip=true,width=0.56\textwidth]{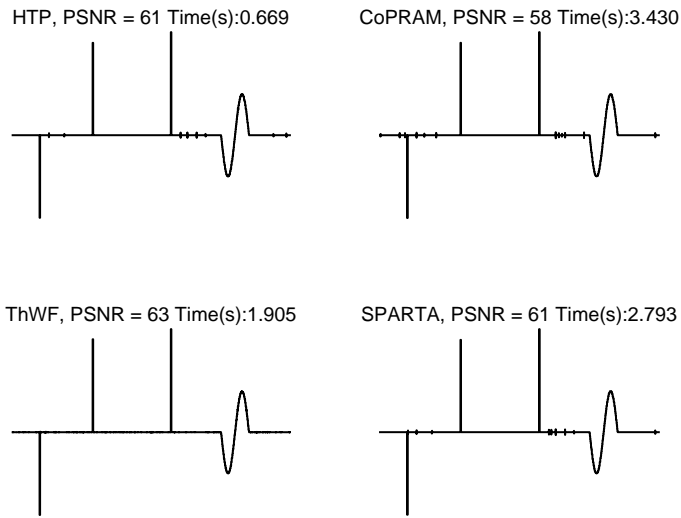}}
\caption{\label{1dsignalrecover} Reconstruction of the one-dimensional signal with $m=2800, n=8000, \sigma=0.05$. Time(s) is the running time in seconds.}
\end{figure}

\section{Proofs}\label{section:proofs}
In this section, we prove Theorem \ref{localconvergence}. To begin with, some crucial propositions/lemmas are presented in Section \ref{subsec:lemmas}. Then the proof for Parts (a) and (b) of Theorem \ref{localconvergence} are given in Sections \ref{subsec:proofa} and \ref{subsec:proofb} respectively.

We prove only the case when $\lV\bm{x}_0-\bm{x}^{\natural}\rV_2\le\lV\bm{x}_0+\bm{x}^{\natural}\rV_2$, so that $\mathrm{dist}\left(\bm{x}_0,\bm{x}^{\natural}\right)=\lV\bm{x}_0-\bm{x}^{\natural}\rV_2$. We will then estimate $\lV\bm{x}_k-\bm{x}^{\natural}\rV_2$, which is an upper bound of $\mathrm{dist}\left(\bm{x}_0,\bm{x}^{\natural}\right)$. It can be done similarly for the case when $\lV\bm{x}_0+\bm{x}^{\natural}\rV_2\le\lV\bm{x}_0-\bm{x}^{\natural}\rV_2$ by estimating $\lV\bm{x}_k+\bm{x}^{\natural}\rV_2$.

\subsection{Key Lemmas}\label{subsec:lemmas}
In this subsection, we give and prove some key propositions and lemmas that will be used in the proof of Theorem \ref{localconvergence}.

Let us first present two probabilistic propositions and a probabilistic lemma. The first probabilistic proposition is well known in compressed sensing theory \cite{foucart2013invitation,candes2005decoding}, which states that the random Gaussian matrix $\bm{A}$ satisfies the restricted isometry property (RIP) if $m$ is sufficiently large.
\begin{proposition}[{\cite[Theorem 9.27]{foucart2013invitation}}]\label{ARIP}
Let $\{\bm{a}_i\}_{i=1}^{m}$ be i.i.d. Gaussian random vectors  with mean $\bm{0}$ and variance matrix $\bm{I}$. Let $\bm{A}$ be defined in \eqref{Aandy}. There exists some universal positive constants $C_1',C_2'$ such that: For any natural number $r\leq n$ and any $\delta_r\in(0,1)$, with probability at least $1-e^{-C_1'm}$, $\bm{A}$ satisfies $r$-RIP with constant $\delta_r$, i.e.,
\begin{equation}\label{eq:RIP}
\left(1-\delta_r\right)\lV\bm{x}\rV_2^2\le\lV \bm{A}\bm{x}\rV_2^2\le \left(1+\delta_r\right)\lV\bm{x}\rV_2^2, \qquad\forall~\lV\bm{x}\rV_0\le r,
\end{equation}
provided $m\ge C_2'\delta_r^{-2}r\log\left(n/r\right)$.
\end{proposition}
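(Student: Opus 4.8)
The plan is to establish the RIP through the standard three-ingredient recipe from high-dimensional probability: a pointwise concentration inequality for $\lV\bm{A}\bm{x}\rV_2^2$, a covering-net argument that upgrades pointwise control to uniform control over a fixed support, and a union bound over all supports of size $r$. First I would fix a support set $\sS\subseteq\{1,\dots,n\}$ with $|\sS|=r$ and a unit vector $\bm{x}$ supported on $\sS$. Since $\bm{A}\bm{x}=\frac{1}{\sqrt{m}}(\langle\bm{a}_1,\bm{x}\rangle,\dots,\langle\bm{a}_m,\bm{x}\rangle)^T$ with each $\langle\bm{a}_i,\bm{x}\rangle\sim\mathcal{N}(0,1)$ independently, the quantity $\lV\bm{A}\bm{x}\rV_2^2=\frac{1}{m}\sum_{i=1}^m\langle\bm{a}_i,\bm{x}\rangle^2$ is an average of i.i.d.\ $\chi^2_1$ variables with mean $1$. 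A Bernstein-type sub-exponential tail bound then yields, for $t\in(0,1)$,
$$
\mathbb{P}\big(\big|\lV\bm{A}\bm{x}\rV_2^2-1\big|\geq t\big)\leq 2e^{-cmt^2}
$$
for some universal $c>0$.

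Next I would discretize. The unit sphere of the $r$-dimensional subspace $\{\bm{x}:\mathrm{supp}(\bm{x})\subseteq\sS\}$ admits a $\rho$-net $\mathcal{N}$ with $|\mathcal{N}|\leq(1+2/\rho)^r$. Choosing $\rho=1/4$, a routine approximation argument shows that if $\big|\lV\bm{A}\bm{z}\rV_2^2-1\big|\leq\delta_r/2$ holds simultaneously for every $\bm{z}\in\mathcal{N}$, then $\big|\lV\bm{A}\bm{x}\rV_2^2-1\big|\leq\delta_r$ holds for every unit $\bm{x}$ supported on $\sS$, which is precisely \eqref{eq:RIP} restricted to $\sS$. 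Applying the pointwise bound with $t=\delta_r/2$ to each net point and union bounding over $\mathcal{N}$, the RIP fails on the fixed support $\sS$ with probability at most $2(1+2/\rho)^r e^{-cm\delta_r^2/4}$.

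Finally I would union bound over the $\binom{n}{r}\leq(en/r)^r$ choices of support, so that the overall failure probability is at most
$$
2\Big(\frac{en}{r}\Big)^r(1+2/\rho)^r e^{-cm\delta_r^2/4}.
$$
Taking logarithms, this drops below $e^{-C_1'm}$ as soon as $cm\delta_r^2/4$ exceeds $r\log(en/r)+r\log(1+2/\rho)+C_1'm+\log 2$; since the dominant entropy term is of order $r\log(n/r)$, this holds once $m\geq C_2'\delta_r^{-2}r\log(n/r)$ for suitable universal constants $C_1',C_2'$.

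The main obstacle, and really the only delicate point, is the constant bookkeeping in this last step: the concentration exponent $cm\delta_r^2$ must simultaneously absorb the entropy $r\log(n/r)$ arising from both the number of supports and the net cardinality, and still leave a residual term of order $m$ to produce the claimed $e^{-C_1'm}$ tail. This is exactly what forces the factor $\delta_r^{-2}$ into the sample-size requirement and what pins down the interplay between $C_1'$ and $C_2'$; everything else is a routine application of Gaussian concentration together with volumetric net estimates.
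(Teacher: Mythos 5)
The paper does not actually prove this proposition: it is imported verbatim from \cite[Theorem 9.27]{foucart2013invitation}, and your concentration--net--union-bound argument is exactly the standard proof of that cited result, so your route coincides with the source's. One caveat on the step you yourself flag as delicate: after the entropy terms $r\log(en/r)+r\log(1+2/\rho)$ are absorbed, the residual exponent is of order $m\delta_r^2$ rather than $m$, so a tail of the form $e^{-C_1'm}$ with a $\delta_r$-independent universal constant $C_1'$ only follows when $\delta_r$ is bounded below by a universal constant (which is how the proposition is actually used in this paper, e.g.\ with $\delta_{3s}\le 0.05$ fixed); in full generality the constant in the exponent must be allowed to scale with $\delta_r^2$.
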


With the help of RIP, we can bound the spectral norm of submatrices of $\bm{A}$. The following result is from Proposition~3.1 in \cite{needell2009cosamp}.
\begin{proposition}[{\cite[Proposition 3.1]{needell2009cosamp}}]\label{RIP}
Under the event \eqref{eq:RIP} with $r=s$ and $r=s'$, for any disjoint subsets $\sS$ and $\T$ of $\{1,2,\cdots,m\}$ satisfying $\lv \sS\rv\le s$ and $\lv \T\rv\le s'$, we have the following inequalities:
\begin{subequations}\label{neq:RIP}
\begin{align}
\lV \bm{A}^{T}_{\sS}\rV_2\le \sqrt{1+\delta_{s}}, \label{RIP:a}\\
1-\delta_{s}\le\lV \bm{A}^{T}_{\sS}\bm{A}_{\sS}\rV_2\le 1+\delta_{s}, \label{RIP:b}\\
\lV \bm{A}^{T}_{\sS}\bm{A}_{\T}\rV_2\le \delta_{s+s'}.\label{RIP:c}
\end{align}
\end{subequations}
\end{proposition}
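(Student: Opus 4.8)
The plan is to translate the restricted isometry property \eqref{eq:RIP} into two-sided spectral bounds on the column submatrices $\bm{A}_{\sS}$ and $\bm{A}_{\T}$, and then read off each of the three inequalities from the variational characterization of the spectral norm. The first thing I would record is the elementary observation that, for any vector $\bm{w}$ indexed by $\sS$ with $\lv\sS\rv\le s$, zero-padding it outside $\sS$ produces an $s$-sparse vector $\tilde{\bm{w}}$ with $\lV\bm{A}_{\sS}\bm{w}\rV_2=\lV\bm{A}\tilde{\bm{w}}\rV_2$. Hence \eqref{eq:RIP} with $r=s$ gives $(1-\delta_s)\lV\bm{w}\rV_2^2\le\lV\bm{A}_{\sS}\bm{w}\rV_2^2\le(1+\delta_s)\lV\bm{w}\rV_2^2$ for every such $\bm{w}$, which is the statement that drives the whole argument.

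Given this, parts \eqref{RIP:a} and \eqref{RIP:b} are immediate. For \eqref{RIP:a} I would use $\lV\bm{A}^{T}_{\sS}\rV_2=\lV\bm{A}_{\sS}\rV_2=\max_{\lV\bm{w}\rV_2=1}\lV\bm{A}_{\sS}\bm{w}\rV_2$, which the upper RIP bound caps at $\sqrt{1+\delta_s}$. For \eqref{RIP:b}, I note that $\bm{A}^{T}_{\sS}\bm{A}_{\sS}$ is symmetric positive semidefinite, so its spectral norm equals its largest eigenvalue, namely $\max_{\lV\bm{w}\rV_2=1}\lV\bm{A}_{\sS}\bm{w}\rV_2^2$. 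Since all the Rayleigh quotients lie in $[1-\delta_s,1+\delta_s]$, the largest eigenvalue is at most $1+\delta_s$ and is certainly at least $1-\delta_s$, so both sides follow at once.

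The only step requiring a genuine idea is the near-orthogonality bound \eqref{RIP:c}, and I expect it to be the main (if mild) obstacle. Here I would exploit the disjointness of $\sS$ and $\T$: for unit vectors $\bm{u},\bm{v}$ supported on $\sS,\T$ respectively, disjointness forces $\lV\bm{u}\pm\bm{v}\rV_2^2=\lV\bm{u}\rV_2^2+\lV\bm{v}\rV_2^2=2$ and makes $\bm{u}\pm\bm{v}$ at most $(s+s')$-sparse. I would then invoke the polarization identity $\langle\bm{A}\bm{u},\bm{A}\bm{v}\rangle=\frac14\left(\lV\bm{A}(\bm{u}+\bm{v})\rV_2^2-\lV\bm{A}(\bm{u}-\bm{v})\rV_2^2\right)$ together with the $(s+s')$-order RIP (available under the same high-probability event via Proposition~\ref{ARIP} applied with $r=s+s'$): bounding the first term above by $2(1+\delta_{s+s'})$ and the second below by $2(1-\delta_{s+s'})$ collapses the difference to $\lvert\langle\bm{A}\bm{u},\bm{A}\bm{v}\rangle\rvert\le\delta_{s+s'}$. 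Taking the supremum over unit $\bm{u},\bm{v}$ identifies this cross inner product with $\lV\bm{A}^{T}_{\sS}\bm{A}_{\T}\rV_2$, completing \eqref{RIP:c}. The crux of this last part is recognizing that disjoint supports promote the combined vectors to sparsity level $s+s'$, so that the larger constant $\delta_{s+s'}$---rather than $\delta_s$ or $\delta_{s'}$---is the right one to apply.
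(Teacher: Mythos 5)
Your proof is correct. Note that the paper itself offers no proof of this proposition---it is imported verbatim from Needell and Tropp's CoSaMP paper---so there is no in-paper argument to compare against; your write-up supplies the standard derivation. Parts \eqref{RIP:a} and \eqref{RIP:b} follow exactly as you say from the zero-padding identification $\lV\bm{A}_{\sS}\bm{w}\rV_2=\lV\bm{A}\tilde{\bm{w}}\rV_2$ and the variational characterization of the spectral norm of the symmetric matrix $\bm{A}_{\sS}^T\bm{A}_{\sS}$. For \eqref{RIP:c}, your polarization argument is sound, and you correctly flag the one point where the proposition's hypothesis is loose: the bound involves $\delta_{s+s'}$, so the RIP event at level $r=s+s'$ must also be in force (the paper's ``with $r=s$ and $r=s'$'' understates what is needed, though Proposition~\ref{ARIP} supplies it on the same high-probability event). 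The only remark worth adding is that the route taken in the cited source, and in most textbook treatments, is slightly different for \eqref{RIP:c}: one observes that $\bm{A}_{\sS}^T\bm{A}_{\T}$ is an off-diagonal block of $\bm{A}_{\sS\cup\T}^T\bm{A}_{\sS\cup\T}-\bm{I}$, whose spectral norm is at most $\delta_{s+s'}$ by \eqref{RIP:b}-type eigenvalue bounds at level $s+s'$, and that the norm of a submatrix never exceeds that of the full matrix. Your polarization identity proves the same inequality with the same constant; the two arguments are interchangeable, with the submatrix route avoiding the explicit computation $\lV\bm{u}\pm\bm{v}\rV_2^2=2$ and yours avoiding the appeal to the norm-monotonicity of submatrices.
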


The second probabilistic lemma is a corollary of \cite[Lemma 25]{soltanolkotabi2019structured}, and one can find same modification of the lemma in \cite[Lemma C.1.]{jagatap2019sample}.
\begin{lemma}[Corollary of {\cite[Lemma 25]{soltanolkotabi2019structured}}]\label{bound:Ax}
Let $\{\bm{a}_i\}_{i=1}^{m}$ be i.i.d. Gaussian random vectors with mean $\bm{0}$ and variance matrix $\bm{I}$. Let $\lambda_0$ be any constant in $(0,\frac{1}{8}]$. Fixing any $\varepsilon_0>0$, there exists some universal positive constants $C_3',C_4'$, if
$$
m\geq C_3's\log(n/s),
$$
then with probability at least $1-e^{-C_4' m}$ it holds that
\begin{equation}\label{event:lemma1}
\frac{1}{m}\mathop{\sum}\limits_{i=1}^{m}\lv \bm{a}_i^T\bm{x}^{\natural}\rv^2\cdot \bm{1}_{\left\{\left(\bm{a}_i^T\bm{x}\right)\left(\bm{a}_i^T\bm{x}^{\natural}\right)\le 0\right\}}
\le\frac{1}{\left(1-\lambda_0\right)^2}\left(\varepsilon_0+\lambda_0 \sqrt{\frac{21}{20}}\right)^2\lV\bm{x}-\bm{x}^{\natural}\rV_2^2,
\end{equation}
for all $\bm{x}$ satisfying $\|\bm{x}\|_0\leq s,\|\bm{x}-\bm{x}^{\natural}\|_2\leq\lambda_0\|\bm{x}^{\natural}\|_2$ .
\end{lemma}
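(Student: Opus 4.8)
The plan is to split the argument into an exact pointwise reduction followed by a uniform concentration bound over the sparse neighbourhood; the latter is precisely (the sparse form of) \cite[Lemma 25]{soltanolkotabi2019structured}, recorded also as \cite[Lemma C.1]{jagatap2019sample}, which I would invoke directly once the reduction is set up. First I would isolate the elementary inequality that drives everything. Writing $u_i=\bm{a}_i^T\bm{x}^{\natural}$ and $v_i=\bm{a}_i^T(\bm{x}-\bm{x}^{\natural})$, so that $\bm{a}_i^T\bm{x}=u_i+v_i$, the sign-disagreement event $\{(\bm{a}_i^T\bm{x})(\bm{a}_i^T\bm{x}^{\natural})\le 0\}$ reads $u_i(u_i+v_i)\le 0$, i.e. $u_i^2\le -u_iv_i\le|u_i||v_i|$, whence $|u_i|\le|v_i|$ and in particular $u_i^2\le v_i^2$ on this event. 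Consequently the event is contained in $\{|\bm{a}_i^T\bm{x}^{\natural}|\le|\bm{a}_i^T\bm{h}|\}$ with $\bm{h}:=\bm{x}-\bm{x}^{\natural}$, and termwise
\[
|\bm{a}_i^T\bm{x}^{\natural}|^2\,\bm{1}_{\{(\bm{a}_i^T\bm{x})(\bm{a}_i^T\bm{x}^{\natural})\le 0\}}\ \le\ |\bm{a}_i^T\bm{h}|^2\,\bm{1}_{\{|\bm{a}_i^T\bm{x}^{\natural}|\le|\bm{a}_i^T\bm{h}|\}}.
\]
Since $\mathrm{supp}(\bm{h})\subseteq\mathrm{supp}(\bm{x})\cup\mathrm{supp}(\bm{x}^{\natural})$ has cardinality at most $2s$, it therefore suffices to bound, uniformly over all such $2s$-sparse $\bm{h}$ with $\|\bm{h}\|_2\le\lambda_0\|\bm{x}^{\natural}\|_2$, the quantity $\frac1m\sum_{i=1}^m|\bm{a}_i^T\bm{h}|^2\,\bm{1}_{\{|\bm{a}_i^T\bm{x}^{\natural}|\le|\bm{a}_i^T\bm{h}|\}}$.

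Next I would establish the estimate for a single fixed $\bm{h}$ and then make it uniform. For fixed $\bm{h}$ the summands are i.i.d., nonnegative, and sub-exponential (being dominated by $(\bm{a}_i^T\bm{h})^2$), so their empirical average concentrates around its mean by Bernstein's inequality, contributing the additive slack $\varepsilon_0\|\bm{h}\|_2^2$. The mean itself is a two-dimensional Gaussian integral of $v^2$ over the thin region $\{|u|\le|v|\}$, where $u=\bm{a}^T\bm{x}^{\natural}$ and $v=\bm{a}^T\bm{h}$; because $\|\bm{h}\|_2/\|\bm{x}^{\natural}\|_2\le\lambda_0$ this region is small and the integral is controlled by $\lambda_0$, the explicit computation producing the constant $\sqrt{21/20}$ in the statement, while the factor $(1-\lambda_0)^{-2}$ enters when one passes between the reference scale $\|\bm{x}^{\natural}\|_2$ of the integral and the target scale $\|\bm{h}\|_2=\|\bm{x}-\bm{x}^{\natural}\|_2$. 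These two ingredients are exactly what the cited Lemma 25 packages, so in practice I would cite it and skip the recomputation.

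The last and most delicate step is to upgrade the fixed-$\bm{h}$ estimate to one that holds simultaneously for every admissible $\bm{h}$. For each of the $\binom{n}{2s}$ possible supports $T$ I would take an $\epsilon$-net of the unit sphere of the $(\le 2s)$-dimensional coordinate subspace indexed by $T$ (of cardinality at most $(3/\epsilon)^{2s}$), apply the single-point Bernstein bound together with a union bound over the net and over all supports, and then extend from the net to arbitrary $\bm{h}$ by a perturbation argument. The union-bound cost is of order $\exp\!\big(2s\log(en/(2s))+2s\log(3/\epsilon)\big)$, which is absorbed by $e^{-cm}$ exactly under the hypothesis $m\ge C_3's\log(n/s)$, yielding the probability $1-e^{-C_4'm}$. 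I expect this extension from the net to be the main obstacle: the indicator $\bm{1}_{\{|\bm{a}_i^T\bm{x}^{\natural}|\le|\bm{a}_i^T\bm{h}|\}}$ is discontinuous in $\bm{h}$, so a naive Lipschitz bound on the summand fails and one must instead dominate it, on each net ball, by a slightly inflated (but $\bm{h}$-independent over the ball) event and control the resulting set-inflation error. Handling this discontinuity cleanly, together with the bookkeeping of the constants $\varepsilon_0$, $\sqrt{21/20}$ and $(1-\lambda_0)^{-2}$, is where the real work lies, and it is precisely the content encapsulated by \cite[Lemma 25]{soltanolkotabi2019structured} and \cite[Lemma C.1]{jagatap2019sample}.
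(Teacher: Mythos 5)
Your proposal is correct and ultimately rests on the same foundation as the paper's own proof, which simply identifies the left-hand side of \eqref{event:lemma1} with the quantity in the second line of \cite[eq. VIII.45]{soltanolkotabi2019structured} and invokes \cite[Lemma 25]{soltanolkotabi2019structured} for the bound. Your additional sketch of the internal mechanics (the pointwise reduction $|\bm{a}_i^T\bm{x}^{\natural}|\le|\bm{a}_i^T(\bm{x}-\bm{x}^{\natural})|$ on the sign-disagreement event, Bernstein concentration, and the net argument over $2s$-sparse supports) is a plausible account of what the cited lemma packages, but the paper does not reprove any of it.
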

\begin{proof}
 In fact, the left hand side of the inequality \eqref{event:lemma1} is same to the second line of \cite[eq. VIII.45]{soltanolkotabi2019structured}, and the upper bound has been given by \cite[Lemma 25]{soltanolkotabi2019structured} (see also \cite[Page 2393]{soltanolkotabi2019structured}).
\end{proof}

With those probabilistic lemmas/propositions, we can show some deterministic lemmas that are crucial to the proof of our theorem. The following lemma bound an error on $\bm{y}_{k+1}$ by the error of $\bm{x}_k$.
\begin{lemma}\label{bound:Ax-y}
Let $\left\{\bm{x}_k,\bm{y}_{k},\sS_k\right\}_{k\ge1}$ be generated by the Algorithm~\ref{alg:htp}. Assume $\lV \bm{x}_{k}-\bm{x}^{\natural}\rV_2\le \lambda_0 \lV \bm{x}^{\natural}\rV_2$. Then under the event \eqref{eq:RIP} with $r=s,2s$ and the event \eqref{event:lemma1}, it holds that
\begin{align*}
\lV \bm{A}^{T}_{\sS_{k+1}}\left(\bm{y}_{k+1}-\bm{A}\bm{x}^{\natural}\right)\rV_2
\le \sqrt{C_{\lambda_0}\left(1+\delta_s\right)}\lV \bm{x}_{k}-\bm{x}^{\natural}\rV_2,
\end{align*}
where $C_{\lambda_0}=\frac{4}{(1-\lambda_0)^2}\left(\varepsilon_0+\lambda_0 \sqrt{\frac{21}{20}}\right)^2$, $\varepsilon_0=10^{-3}$.
\end{lemma}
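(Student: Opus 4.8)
The plan is to exploit the very simple sign structure of the residual $\bm{y}_{k+1}-\bm{A}\bm{x}^{\natural}$. Working throughout with the normalized matrix $\bm{A}$, the data satisfy $\bm{y}=\lv\bm{A}\bm{x}^{\natural}\rv$, so coordinatewise $[\bm{y}_{k+1}]_i=\lv[\bm{A}\bm{x}^{\natural}]_i\rv\cdot\mathrm{sgn}([\bm{A}\bm{x}_k]_i)$, whereas $[\bm{A}\bm{x}^{\natural}]_i=\lv[\bm{A}\bm{x}^{\natural}]_i\rv\cdot\mathrm{sgn}([\bm{A}\bm{x}^{\natural}]_i)$. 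First I would run through the cases for each coordinate: when $\mathrm{sgn}([\bm{A}\bm{x}_k]_i)$ and $\mathrm{sgn}([\bm{A}\bm{x}^{\natural}]_i)$ agree the $i$-th entry of the residual vanishes, and otherwise (including the degenerate case $[\bm{A}\bm{x}_k]_i=0$) its magnitude is at most $2\lv[\bm{A}\bm{x}^{\natural}]_i\rv$. This gives the pointwise bound
$$\lv[\bm{y}_{k+1}-\bm{A}\bm{x}^{\natural}]_i\rv\le 2\lv[\bm{A}\bm{x}^{\natural}]_i\rv\cdot\bm{1}_{\{(\bm{a}_i^T\bm{x}_k)(\bm{a}_i^T\bm{x}^{\natural})\le 0\}},$$
where I use that the positive normalization $1/\sqrt m$ does not alter the sign of the product.

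Next I would square and sum over $i$. Since $\lv[\bm{A}\bm{x}^{\natural}]_i\rv^2=\frac1m\lv\bm{a}_i^T\bm{x}^{\natural}\rv^2$, this yields
$$\lV\bm{y}_{k+1}-\bm{A}\bm{x}^{\natural}\rV_2^2\le \frac{4}{m}\sum_{i=1}^m\lv\bm{a}_i^T\bm{x}^{\natural}\rv^2\cdot\bm{1}_{\{(\bm{a}_i^T\bm{x}_k)(\bm{a}_i^T\bm{x}^{\natural})\le 0\}},$$
whose right-hand side is exactly four times the left-hand side of \eqref{event:lemma1} evaluated at $\bm{x}=\bm{x}_k$. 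Because every iterate $\bm{x}_k$ produced by Algorithm~\ref{alg:htp} is supported on a set of size at most $s$, it satisfies $\lV\bm{x}_k\rV_0\le s$, and the hypothesis gives $\lV\bm{x}_k-\bm{x}^{\natural}\rV_2\le\lambda_0\lV\bm{x}^{\natural}\rV_2$; hence Lemma~\ref{bound:Ax} applies and bounds this sum by $C_{\lambda_0}\lV\bm{x}_k-\bm{x}^{\natural}\rV_2^2$, where the factor $4$ from the sign flip is precisely what turns the constant of \eqref{event:lemma1} into $C_{\lambda_0}=\frac{4}{(1-\lambda_0)^2}(\varepsilon_0+\lambda_0\sqrt{21/20})^2$ with $\varepsilon_0=10^{-3}$. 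Taking square roots gives $\lV\bm{y}_{k+1}-\bm{A}\bm{x}^{\natural}\rV_2\le\sqrt{C_{\lambda_0}}\lV\bm{x}_k-\bm{x}^{\natural}\rV_2$.

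Finally I would reintroduce $\bm{A}^{T}_{\sS_{k+1}}$. Since $\lv\sS_{k+1}\rv\le s$, the submatrix estimate \eqref{RIP:a} of Proposition~\ref{RIP} gives $\lV\bm{A}^{T}_{\sS_{k+1}}\rV_2\le\sqrt{1+\delta_s}$, so
$$\lV\bm{A}^{T}_{\sS_{k+1}}(\bm{y}_{k+1}-\bm{A}\bm{x}^{\natural})\rV_2\le\lV\bm{A}^{T}_{\sS_{k+1}}\rV_2\,\lV\bm{y}_{k+1}-\bm{A}\bm{x}^{\natural}\rV_2\le\sqrt{(1+\delta_s)C_{\lambda_0}}\,\lV\bm{x}_k-\bm{x}^{\natural}\rV_2,$$
which is exactly the claimed inequality.

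I do not expect a genuine obstacle here: the whole argument rests on the observation that the residual is supported only on those coordinates where the two sign patterns disagree, which is precisely the indicator set appearing in Lemma~\ref{bound:Ax}. The only items needing care are the bookkeeping of the factor $2$ (hence the $4$ in $C_{\lambda_0}$) coming from the sign reversal, checking that the non-strict inequality in the indicator correctly absorbs the degenerate coordinate $[\bm{A}\bm{x}_k]_i=0$, and verifying that $\bm{x}_k$ meets the two hypotheses ($s$-sparsity and $\lambda_0$-closeness to $\bm{x}^{\natural}$) required to invoke \eqref{event:lemma1}; all of these are routine.
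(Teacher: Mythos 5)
Your proposal is correct and follows essentially the same route as the paper: bound the residual coordinatewise by $2\lv\bm{a}_i^T\bm{x}^{\natural}\rv/\sqrt{m}$ on the set where the signs of $\bm{A}\bm{x}_k$ and $\bm{A}\bm{x}^{\natural}$ disagree, invoke the event \eqref{event:lemma1} from Lemma~\ref{bound:Ax} to absorb the factor $4$ into $C_{\lambda_0}$, and finish with the operator-norm bound \eqref{RIP:a}. Your explicit treatment of the degenerate coordinate $[\bm{A}\bm{x}_k]_i=0$ is a minor point the paper leaves implicit, but the argument is otherwise identical.
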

\begin{proof}

Let the sets $\{\G_k\}_{k\ge1}$ defined to be
$$\G_k=\{i:\mathrm{sgn}\left(\bm{a}_i^T\bm{x}_k\right)=\mathrm{sgn}\left(\bm{a}_i^T\bm{x}^{\natural}\right),\ 1
\le i\le m \},\ k=1,2,3,\cdots.$$
Recall that $\bm{y}_{k+1}:=\bm{y} \odot \mathrm{sgn}{\left(\bm{\bm{A}}\bm{x}_{k}\right)}$. We then have
\begin{align}\label{bound:yk-Ax}
\begin{split}
\lV \bm{y}_{k+1}-\bm{A}\bm{x}^{\natural}\rV_2^2
&=\frac{1}{m}\mathop{\sum}\limits_{i=1}^{m}\left(\mathrm{sgn}\left(\bm{a}_i^T\bm{x}_k\right)-\mathrm{sgn}\left(\bm{a}_i^T\bm{x}^{\natural}\right)\right)^2
\lvert \bm{a}_i^T\bm{x}^{\natural}\lvert^2 \\
&\le  \frac{4}{m}\mathop{\sum}\limits_{i\in \G^c_k} \lv \bm{a}_i^T\bm{x}^{\natural}\rv^2 \cdot \bm{1}_{\left\{\left(\bm{a}_i^T\bm{x}_k\right)\left(\bm{a}_i^T\bm{x}^{\natural}\right)\le 0\right\}}\\
&\le\underbrace{\frac{4}{(1-\lambda_0)^2}\left(\varepsilon_0+\lambda_0 \sqrt{\frac{21}{20}}\right)^2}_{C_{\lambda_0}}\lV \bm{x}_k-\bm{x}^{\natural}\rV_2^2.
\end{split}
\end{align}
where the second line follows from $\lvert\mathrm{sgn}\left(\bm{a}_i^T\bm{x}\right)-\mathrm{sgn}\left(\bm{a}_i^T\bm{x}^{\natural}\right)\lvert\le 2$ and $\mathrm{sgn}\left(\bm{a}_i^T\bm{x}_k\right)-\mathrm{sgn}\left(\bm{a}_i^T\bm{x}^{\natural}\right)=0$ on $\G_k$, the last line follows from Lemma~\ref{bound:Ax} with a fixed $\varepsilon_0=10^{-3}$. Together with \eqref{RIP:a} in Proposition \ref{RIP}, \eqref{bound:yk-Ax} leads to
\begin{align*}
\lV \bm{A}^{T}_{\sS_{k+1}}\left(\bm{y}_{k+1}-\bm{A}\bm{x}^{\natural}\right)\rV_2
\le \sqrt{C_{\lambda_0}\left(1+\delta_s\right)}\lV \bm{x}_{k}-\bm{x}^{\natural}\rV_2.
\end{align*}
\end{proof}
The last key lemma estimate the error of the vector obtained by one iteration of IHT. Its proof uses a similar strategy to the proof of  in \cite[Lemma~3]{wang2016sparse}. To make the paper self-contained, we have included the details of the proof.
\begin{lemma}\label{contraction:u0}
Let $\left\{\bm{x}_k,\bm{y}^{k},\sS_k\right\}_{k\ge1}$ be the sequence generated by Algorithm~\ref{alg:htp}. Define
$$
\bm{u}_{k+1}:=\H_s\big(\bm{x}_{k}+\mu \bm{A}^T\left( \bm{y} _{k+1}-\bm{A}\bm{x}_{k}\right)\big).
$$
Assume $\lV \bm{x}_{k}-\bm{x}^{\natural}\rV_2\le \lambda_0\lV \bm{x}^{\natural}\rV_2$.
Under the event \eqref{eq:RIP} with $r=s,2s,3s$ and the event \eqref{event:lemma1}, it holds that
$$
\lV \bm{u}^{k+1}-\bm{x}^{\natural}\rV_2\le \rho\lV \bm{x}_{k}-\bm{x}^{\natural}\rV_2,
$$
where $\rho=2\left(\sqrt{2}\max\{\mu\delta_{3s},1-\mu\left(1-\delta_{2s}\right)\}+\mu\sqrt{C_{\lambda_0}\left(1+\delta_{2s}\right)}\right)$ with $\mu<\frac{1}{1+\delta_{2s}}$.
\end{lemma}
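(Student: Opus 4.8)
The plan is to track the vector $\bm{u}_{k+1}=\H_s\big(\bm{x}_k+\mu\bm{A}^T(\bm{y}_{k+1}-\bm{A}\bm{x}_k)\big)$ against the ground truth $\bm{x}^{\natural}$ by working on a small combined support. Let $\sS^{\natural}=\mathrm{supp}(\bm{x}^{\natural})$, let $\sS_{k+1}=\mathrm{supp}(\bm{u}_{k+1})$, and set $\U=\sS^{\natural}\cup\sS_{k+1}$, so $|\U|\le 2s$. The first step is to exploit optimality of the hard thresholding operator: since $\bm{u}_{k+1}$ is the best $s$-sparse approximation to $\bm{v}_{k+1}:=\bm{x}_k+\mu\bm{A}^T(\bm{y}_{k+1}-\bm{A}\bm{x}_k)$, we have $\lV\bm{u}_{k+1}-\bm{v}_{k+1}\rV_2\le\lV\bm{x}^{\natural}-\bm{v}_{k+1}\rV_2$. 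I would feed this into a triangle-inequality decomposition $\lV\bm{u}_{k+1}-\bm{x}^{\natural}\rV_2\le\lV\bm{u}_{k+1}-\bm{v}_{k+1}\rV_2+\lV\bm{v}_{k+1}-\bm{x}^{\natural}\rV_2$ and then restrict everything to $\U$ (legitimate since both $\bm{u}_{k+1}$ and $\bm{x}^{\natural}$ are supported in $\U$), which contributes the outer factor of $2$ and exposes the quantity $\lV[\bm{v}_{k+1}-\bm{x}^{\natural}]_{\U}\rV_2$ that must be bounded.

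The second step is the core estimate. I would write $[\bm{v}_{k+1}-\bm{x}^{\natural}]_{\U}=[\bm{x}_k-\bm{x}^{\natural}+\mu\bm{A}^T(\bm{y}_{k+1}-\bm{A}\bm{x}_k)]_{\U}$ and split the perturbation $\bm{y}_{k+1}-\bm{A}\bm{x}_k = (\bm{y}_{k+1}-\bm{A}\bm{x}^{\natural}) + \bm{A}(\bm{x}^{\natural}-\bm{x}_k)$, so that
\begin{align*}
[\bm{v}_{k+1}-\bm{x}^{\natural}]_{\U}
=\big[(\bm{I}-\mu\bm{A}^T\bm{A})(\bm{x}_k-\bm{x}^{\natural})\big]_{\U}
+\mu\big[\bm{A}^T(\bm{y}_{k+1}-\bm{A}\bm{x}^{\natural})\big]_{\U}.
\end{align*}
The second term is controlled directly by Lemma~\ref{bound:Ax-y} (with $\sS_{k+1}$ replaced by $\U$, still of size $\le 2s$), giving the $\mu\sqrt{C_{\lambda_0}(1+\delta_{2s})}$ contribution. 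For the first term, I would further decompose $\bm{x}_k-\bm{x}^{\natural}$ over its own support (size $\le 2s$) and use the standard RIP bookkeeping from Proposition~\ref{RIP}: the diagonal block $[(\bm{I}-\mu\bm{A}^T\bm{A})]_{\U}$ acting on the part of $\bm{x}_k-\bm{x}^{\natural}$ inside $\U$ has operator norm at most $\max\{|1-\mu(1-\delta_{2s})|,|1-\mu(1+\delta_{2s})|\}$, while the off-support cross term is bounded by $\mu\delta_{3s}$ via \eqref{RIP:c} on disjoint index sets of total size $\le 3s$. Combining these two pieces with a $\sqrt{2}$ from merging the two orthogonal-ish components yields the $\sqrt{2}\max\{\mu\delta_{3s},1-\mu(1-\delta_{2s})\}$ factor, where the condition $\mu<\frac{1}{1+\delta_{2s}}$ is what makes $1-\mu(1-\delta_{2s})$ the dominant (nonnegative) branch of the max.

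The main obstacle I anticipate is the careful separation of $\bm{x}_k-\bm{x}^{\natural}$ into the part overlapping $\U$ versus the part on a disjoint set, and then bounding $(\bm{I}-\mu\bm{A}^T\bm{A})$ block-wise so that the contraction factor and the cross-term factor combine under a single $\max$ with the correct $\sqrt{2}$ aggregation rather than a crude sum; getting the constants to land exactly as stated requires applying \eqref{RIP:b} and \eqref{RIP:c} on precisely the right index sets and invoking the hypothesis $\lV\bm{x}_k-\bm{x}^{\natural}\rV_2\le\lambda_0\lV\bm{x}^{\natural}\rV_2$ to legitimize Lemma~\ref{bound:Ax-y}. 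Once those RIP estimates are assembled, collecting terms gives $\lV\bm{u}_{k+1}-\bm{x}^{\natural}\rV_2\le\rho\lV\bm{x}_k-\bm{x}^{\natural}\rV_2$ with the claimed $\rho$, completing the proof.
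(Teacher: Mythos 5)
Your proposal is correct and follows essentially the same route as the paper's proof: the hard-thresholding optimality plus triangle inequality on the combined support $\T_{k+1}=\sS_{k+1}\cup\sS_{\natural}$ to get the factor $2$, the three-way split into the diagonal block $(\bm{I}-\mu\bm{A}^T_{\T_{k+1}}\bm{A}_{\T_{k+1}})$, the off-support cross term controlled by $\delta_{3s}$ via \eqref{RIP:c}, and the phase-error term handled by Lemma~\ref{bound:Ax-y}, with the $\sqrt{2}$ coming from combining the two orthogonal pieces of $\bm{x}_k-\bm{x}^{\natural}$. No gaps.
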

\begin{proof}
Define $\sS_{\natural}:=\mathrm{supp}\left(\bm{x}^{\natural}\right)$, $\T_{k+1}:=\sS_{k+1}\bigcup \sS_{\natural}$, and
$$
\bm{v}_{k+1}:=\bm{x}_{k}+\mu \bm{A}^T\left( \bm{y}_{k+1}-\bm{A}\bm{x}_{k}\right).
$$
Since $ \bm{u}_{k+1}$ is the best $s$-term approximation of $\bm{v}_{k+1}$, we have
$$
\lV\bm{u}_{k+1}-\bm{v}_{k+1}\rV_2\le \lV\bm{x}^{\natural}-\bm{v}_{k+1}\rV_2,
$$
which together with $\mathrm{supp}\left(\bm{u}_{k+1}\right)\subseteq \T_{k+1}$ and $\mathrm{supp}\left(\bm{x}^{\natural}\right)\subseteq \T_{k+1}$ implies
$$
\lV[\bm{u}_{k+1}]_{\T_{k+1}}-[\bm{v}_{k+1}]_{\T_{k+1}}\rV_2\le \lV[\bm{x}^{\natural}]_{\T_{k+1}}-[\bm{v}_{k+1}]_{\T_{k+1}}\rV_2.
$$
Then, by the triangle inequality and the inequality above, we obtain
\begin{align}\label{bound:ud-xd}
\begin{split}
\lV [\bm{u}_{k+1}]_{\T_{k+1}}-[\bm{x}^{\natural}]_{\T_{k+1}}\rV_2
&=\lV [\bm{u}_{k+1}]_{\T_{k+1}}-[\bm{v}_{k+1}]_{\T_{k+1}}+[\bm{v}_{k+1}]_{\T_{k+1}}-[\bm{x}^{\natural}]_{\T_{k+1}}\rV_2\\
&\le \lV [\bm{u}_{k+1}]_{\T_{k+1}}-[\bm{v}_{k+1}]_{\T_{k+1}}\rV_2+\lV[\bm{v}_{k+1}]_{\T_{k+1}}-[\bm{x}^{\natural}]_{\T_{k+1}}\rV_2\\
&\le 2\lV [\bm{x}^{\natural}]_{\T_{k+1}}-[\bm{v}_{k+1}]_{\T_{k+1}}\rV_2.
\end{split}
\end{align}
Using definition of $\bm{v}_{k+1}$, a direct calculation gives
\begin{align}\label{bound:vd-xd}
\begin{split}
&\lV [\bm{v}_{k+1}]_{\T_{k+1}}-[\bm{x}^{\natural}]_{\T_{k+1}}\rV_2\\
=&\lV[\bm{x}_{k}]_{\T_{k+1}}-[\bm{x}^{\natural}]_{\T_{k+1}}-\mu \bm{A}^{T}_{\T_{k+1}}\bm{A}(\bm{x}_{k}-\bm{x}^{\natural})
+\mu \bm{A}^{T}_{\T_{k+1}}\big(\bm{y}_{k+1}-\bm{A}\bm{x}^{\natural}\big)\rV_2\\
\le& \underbrace{\lV\left(\bm{I}-\mu \bm{A}^{T}_{\T_{k+1}}\bm{A}_{\T_{k+1}}\right)\left([\bm{x}_{k}]_{\T_{k+1}}-[\bm{x}^{\natural}]_{\T_{k+1}}\right)\rV_2}_{I_1}
+\underbrace{\lV\mu \bm{A}^{T}_{\T_{k+1}}\bm{A}_{\T_{k}\backslash\T_{k+1}}\left[\bm{x}_{k}-\bm{x}^{\natural}\right]_{\T_{k}\backslash\T_{k+1}}\rV_2}_{I_2} \\
&\qquad+\underbrace{\lV\mu \bm{A}^{T}_{\T_{k+1}}\left(\bm{y}_{k+1}-\bm{A}\bm{x}^{\natural}\right)\rV_2}_{I_3}.
\end{split}
\end{align}
Let us estimate $I_1$, $I_2$, and $I_3$ one by one.
\begin{itemize}
\item For $I_1$: It follows from \eqref{RIP:b} in Proposition \ref{RIP}, $\mu\in\left(0,\frac{1}{1+\delta_{2s}}\right)$ and Weyl's inequality that
$$
1-\mu\left(1+\delta_{2s}\right)\le\lV\bm{I}-\mu \bm{A}^{T}_{\T_{k+1}}\bm{A}_{\T_{k+1}}\rV_2\le
 1-\mu\left(1-\delta_{2s}\right),
$$
which implies
$$
I_1\leq \left(1-\mu\left(1-\delta_{2s}\right)\right)\lV[\bm{x}_{k}]_{\T_{k+1}}-[\bm{x}^{\natural}]_{\T_{k+1}}\rV_2.
$$

\item For $I_2$: Eq. \eqref{RIP:c} in Proposition \ref{RIP} implies
$$
I_2
\le \mu\delta_{3s}\lV[\bm{x}_{k}-\bm{x}^{\natural}]_{\T_{k}\setminus\T_{k+1}}\rV_2.
$$

\item For $I_3$: Lemma \ref{bound:Ax-y} gives directly
\begin{align}\label{term2:vd-xd}
\lV\mu \bm{A}^{T}_{\T_{k+1}}\left(\bm{y}_{k+1}-\bm{A}\bm{x}^{\natural}\right)\rV_2
\le \mu\sqrt{C_{\lambda_0}\left(1+\delta_{2s}\right)}\lV\bm{x}_{k}-\bm{x}^{\natural}\rV_2
\end{align}
\end{itemize}

Combining all terms together, we obtain
\begin{equation}
\begin{split}
\lV [\bm{v}_{k+1}]_{\T_{k+1}}-[\bm{x}^{\natural}]_{\T_{k+1}}\rV_2
&\leq I_1+I_2+I_3\leq \sqrt{2(I_1^2+I_2^2)}+I_3\cr
&\leq \sqrt{2}\max\{\mu\delta_{3s},1-\mu\left(1-\delta_{2s}\right)\}\lV\bm{x}_{k}-\bm{x}^{\natural}\rV_2+\mu\sqrt{C_{\lambda_0}\left(1+\delta_{2s}\right)}\lV\bm{x}_{k}-\bm{x}^{\natural}\rV_2\cr
&=\left(\sqrt{2}\max\{\mu\delta_{3s},1-\mu\left(1-\delta_{2s}\right)\}+\mu\sqrt{C_{\lambda_0}\left(1+\delta_{2s}\right)}\right)\lV\bm{x}_{k}-\bm{x}^{\natural}\rV_2.
\end{split}
\end{equation}
We conclude the proof by using \eqref{bound:ud-xd}.
\end{proof}

\subsection{Proof of Part (a) of Theorem \ref{localconvergence}}
\label{subsec:proofa}
Now we are ready to prove Part (a) of Theorem \ref{localconvergence}, i.e., the local convergence with a linear rate.
\begin{proof}[Proof of Part (a) of Theorem \ref{localconvergence}]
Under the event \eqref{eq:RIP} with $r=s,2s,3s$ and the event \eqref{event:lemma1}, the theorem is proved by induction. Suppose $\lV \bm{x}_{k}-\bm{x}^{\natural}\rV_2\leq\lambda_0\lV \bm{x}^{\natural}\rV_2$. Define $\sS_{\natural}=\mathrm{supp}\left(\bm{x}^{\natural}\right)$. The optimality condition \eqref{eq:optxk+1} gives
\begin{align*}
\bm{A}^{T}_{\sS_{k+1}}\bm{A}_{\sS_{k+1}}\left( [\bm{x}_{k+1}]_{\sS_{k+1}}-[\bm{x}^{\natural}]_{\sS_{k+1}}\right)
&=\bm{A}^{T}_{\sS_{k+1}}\left( \bm{y}_{k+1}-\bm{A}_{\sS_{k+1}}[\bm{x}^{\natural}]_{\sS_{k+1}}\right)\\
&=\bm{A}^{T}_{\sS_{k+1}}\left(\bm{y}_{k+1}-\bm{A}\bm{x}^{\natural}\right)+\bm{A}^{T}_{\sS_{k+1}}\bm{A}_{\sS_{k+1}^c}[\bm{x}^{\natural}]_{\sS_{k+1}^c}\\
&=\bm{A}^{T}_{\sS_{k+1}}\left(\bm{y}_{k+1}-\bm{A}\bm{x}^{\natural}\right) +\bm{A}^{T}_{\sS_{k+1}}\bm{A}_{\sS_{\natural}\setminus\sS_{k+1}}[\bm{x}^{\natural}]_{\sS_{\natural}\setminus\sS_{k+1}}.
\end{align*}
In view of Lemma~\ref{bound:Ax-y} and Proposition~\ref{RIP}, this leads to
\begin{equation}\label{eq:est1inproofa}
\begin{split}
\left(1-\delta_s\right)\lV [\bm{x}_{k+1}]_{\sS_{k+1}}-[\bm{x}^{\natural}]_{\sS_{k+1}}\rV_2 &\le\lV \bm{A}^{T}_{\sS_{k+1}}\bm{A}_{\sS_{k+1}}\left( [\bm{x}_{k+1}]_{\sS_{k+1}}-[\bm{x}^{\natural}]_{\sS_{k+1}}\right)\rV_2\\
&\le\lV\bm{A}^{T}_{\sS_{k+1}}\left(\bm{y}_{k+1}-\bm{A}\bm{x}^{\natural}\right)\rV_2 +\lV\bm{A}^{T}_{\sS_{k+1}}\bm{A}_{\sS_{\natural}\setminus\sS_{k+1}}[\bm{x}^{\natural}]_{\sS_{\natural}\setminus\sS_{k+1}}\rV_2\\
&\le \sqrt{C_{\lambda_0}\left(1+\delta_s\right)}\lV \bm{x}_{k}-\bm{x}^{\natural}\rV_2+\delta_{2s}\lV[\bm{x}^{\natural}]_{\sS_{\natural}\setminus\sS_{k+1}} \rV_2.
\end{split}
\end{equation}
Moreover, since $[\bm{x}^{\natural}]_{\sS_{\natural}\setminus\sS_{k+1}}$ is a subvector of $\bm{x}^{\natural}-\bm{u}_{k+1}$, Lemma~\ref{contraction:u0} implies
\begin{equation}\label{eq:est2inproofa}
\lV [\bm{x}^{\natural}]_{\sS_{\natural}\setminus\sS_{k+1}} \rV_2\le \lV \bm{u}_{k+1}-\bm{x}^{\natural}\rV_2\le \rho\lV \bm{x}_{k}-\bm{x}^{\natural}\rV_2,
\end{equation}
where
$\rho=2\left(\sqrt{2}\max\{\mu\delta_{3s},1-\mu\left(1-\delta_{2s}\right)\}+\mu\sqrt{C_{\lambda_0}\left(1+\delta_{2s}\right)}\right)$ with $\sqrt{C_{\lambda_0}}=\frac{2}{(1-\lambda_0)}\left(\varepsilon_0+\lambda_0 \sqrt{\frac{21}{20}}\right)$, $\varepsilon_0=10^{-3}$. Eq. \eqref{eq:est2inproofa} is plugged into \eqref{eq:est1inproofa} to yield
\begin{equation}\label{eq:erronSk+1}
\lV [\bm{x}_{k+1}]_{\sS_{k+1}}-[\bm{x}^{\natural}]_{\sS_{k+1}}\rV_2
\le \frac{\sqrt{C_{\lambda_0}\left(1+\delta_s\right)}+\delta_{2s}\rho}{1-\delta_s}\lV \bm{x}_{k}-\bm{x}^{\natural}\rV_2.
\end{equation}
For the error on $\sS_{k+1}^c$, we use \eqref{eq:est2inproofa} again as follows
\begin{equation}\label{eq:erronSk+1c}
\lV [\bm{x}_{k+1}]_{\sS_{k+1}^c}-[\bm{x}^{\natural}]_{\sS_{k+1}^c}\rV_2=\lV \bm{x}^{\natural}_{S_{k+1}^c} \rV_2=\lV [\bm{x}^{\natural}]_{\sS_{\natural}\setminus\sS_{k+1}} \rV_2\le \rho\lV \bm{x}_{k}-\bm{x}^{\natural}\rV_2.
\end{equation}
Combining \eqref{eq:erronSk+1} and \eqref{eq:erronSk+1c}, we obtain
\begin{align*}
\lV\bm{x}_{k+1}-\bm{x}^{\natural}\rV_2^2
\le\underbrace{\left( \left(\frac{\sqrt{C_{\lambda_0}\left(1+\delta_s\right)}+\delta_{2s}\rho}{1-\delta_s}\right)^2+\rho^2\right)}_{\alpha^2}\lV \bm{x}_{k}-\bm{x}^{\natural}\rV_2^2,
\end{align*}
Since $\delta_s\le\delta_{2s}\le\delta_{3s}$, $\rho$ can be small as $\delta_{3s}, \lambda_0$ approach to $0$. Moreover, small $\lambda_0$ give a small $C_{\lambda_0}$ (since $\varepsilon_0$ is small). Therefore, one can set proper parameters $\delta_{3s}$, $\lambda_0$ to make $\alpha<1$. For example, recall $\varepsilon_0=10^{-3}$, then for $\delta_{3s}\le 0.05$ and $\mu=0.95$, we have $\alpha\in (0,1)$ if provided $\lambda_0\le \frac{1}{8}$.  Since $\lV\bm{x}_{k+1}-\bm{x}^{\natural}\rV_2\leq \alpha\lV\bm{x}_{k}-\bm{x}^{\natural}\rV_2\leq \lambda_0 \lV\bm{x}^{\natural}\rV_2$, the hypothesis of the induction is satisfied. Therefore, by induction,
$$
\lV\bm{x}_{k+1}-\bm{x}^{\natural}\rV_2\leq \alpha\lV\bm{x}_{k}-\bm{x}^{\natural}\rV_2,\quad\forall~k\geq 0.
$$
\end{proof}

\subsection{Proof of Part (b) of Theorem \ref{localconvergence}}\label{subsec:proofb}
In the following, we prove Part (b) of Theorem \ref{localconvergence}, i.e., the finite-step termination of Algorithm \ref{alg:htp}.

\begin{proof}[Proof of Part (b) of Theorem \ref{localconvergence}]
This part is proved under the event that Part (a) holds.

Let $k_1$ be the minimum integer that satisfies
\begin{equation}\label{eq:k1}
\lambda_0\|\bm{x}^{\natural}\|_2\alpha^{k_1}< |x_{\min}^{\natural}|,
\end{equation}
where $x_{\min}^{\natural}$ is the smallest nonzero entry of $\bm{x}^{\natural}$ in magnitude. Then we must have $\sS_{\natural}\subseteq\sS_k$ for all $k\geq k_1$, because otherwise Part (a) of Theorem \ref{localconvergence} implies $
\|\bm{x}_k-\bm{x}^{\natural}\|_2\leq \lambda_0\|\bm{x}^{\natural}\|_2\alpha^{k}\leq \lambda_0\|\bm{x}^{\natural}\|_2\alpha^{k_1}< |x_{\min}^{\natural}|
$,
which contradicts with $\|\bm{x}_k-\bm{x}^{\natural}\|_2\geq|x_i^{\natural}|\geq|x_{\min}^{\natural}|$ for some $i\in\sS_{\natural}\setminus\sS_{k}\neq\emptyset$.

Now we consider $k\geq k_1$. Let $y_{\min}$ be the minimum nonzero entry of $\bm{y}$.  Since $\sS_{\natural}\subseteq\sS_{k+1}$,
\begin{align*}
\lv\l \bm{A}^{T}_{\sS_{k+1}}\left(\bm{y}_{k+1}-\bm{A}\bm{x}^{\natural}\right),[\bm{x}^{\natural}]_{\sS_{k+1}}\r\rv
&=\lv\l \bm{A}^{T}_{\sS_{\natural}}\left(\bm{y}_{k+1}-\bm{A}\bm{x}^{\natural}\right),[\bm{x}^{\natural}]_{S_{\natural}}\r\rv=\lv\l\bm{y}_{k+1}-\bm{A}\bm{x}^{\natural},\bm{A}_{\sS_{\natural}}[\bm{x}^{\natural}]_{\sS_{\natural}}\r\rv\\
&=\lv\l |\bm{A}\bm{x}^{\natural}| \odot \mathrm{sgn}\left(\bm{A}\bm{x}_{k}\right)-\bm{A}\bm{x}^{\natural},\bm{A}\bm{x}^{\natural} \r\rv=\sum\limits_{i\in \G_{k}^c}\frac{2}{m} |\bm{a}_i^T\bm{x}^{\natural}|^2\\
&\ge 2\lv\G_{k}^c \rv |y_{\min}|^2,
\end{align*}
where $\G_k=\{i:\mathrm{sgn}\left(\bm{a}_i^T\bm{x}_k\right)=\mathrm{sgn}\left(\bm{a}_i^T\bm{x}^{\natural}\right)\}$.  Thus,
\begin{align}\label{bound:Gc}
\begin{split}
\lv\G_{k}^c \rv
&\le \frac{1}{2|y_{\min}|^2}\lv\l \bm{A}^{T}_{\sS_{k+1}}\left(\bm{y}_{k+1}-\bm{A}\bm{x}^{\natural}\right),[\bm{x}^{\natural}]_{\sS_{k+1}}\r\rv
\leq \frac{1}{2|y_{\min}|^2}\lV\bm{A}^{T}_{\sS_{k+1}}\left(\bm{y}_{k+1}-\bm{A}\bm{x}^{\natural}\right)\rV_2\lV[\bm{x}^{\natural}]_{\sS_{k+1}}\rV_2\\
&\leq\frac{1}{2|y_{\min}|^2}\sqrt{C_{\lambda_0}\left(1+\delta_s\right)}\lV \bm{x}_{k}-\bm{x}^{\natural}\rV_2
\lV\bm{x}^{\natural}\rV_2
\leq \frac{\lambda_0\sqrt{C_{\lambda_0}\left(1+\delta_{s}\right)}\lV\bm{x}^{\natural}\rV_2^2 }{2|y_{\min}|^2}\alpha^{k},
\end{split}
\end{align}
where the last three inequalities follow from Cauchy-Schwartz inequality, Lemma \ref{bound:Ax-y}, and Part (a) of Theorem \ref{localconvergence} respectively. Define $k_2$ be the minimum integer such that
$$
\frac{\lambda_0\sqrt{C_{\lambda_0}\left(1+\delta_{s}\right)}\lV\bm{x}^{\natural}\rV_2^2 }{2|y_{\min}|^2}\alpha^{k_2}<1.
$$
Then, for all $k\geq\max\{k_1,k_2\}$, we have $\lv\G_{k}^c \rv<1$. Since $\lv\G_{k}^c \rv$ is an integer, $\lv\G_{k}^c \rv=0$ for all $k\geq\max\{k_1,k_2\}$, which implies $\bm{y}_{k+1}=\bm{A}\bm{x}^{\natural}$ for all $k\geq\max\{k_1,k_2\}$.

Now we consider all $k$ satisfying $k\geq\max\{k_1,k_2\}$, so that $\sS_{\natural}\subset\sS_{k+1}$ and $\bm{y}_{k+1}=\bm{A}\bm{x}^{\natural}$. Then we have $\bm{x}_{k+1}=\mathop{\mathrm{arg~min}}_{\mathrm{supp}(\bm{x})\subset\sS_{k+1}}\|\bm{A}\bm{x}-\bm{y}_{k+1}\|_2=\mathop{\mathrm{arg~min}}_{\mathrm{supp}(\bm{x})\subset\sS_{k+1}}\|\bm{A}\bm{x}-\bm{A}\bm{x}^{\natural}\|_2$. Since $|\sS_{k+1}|\leq s$ and $\sS_{\natural}\subset\sS_{k+1}$, the zeroth order optimality condition and the RIP imply
$$
\sqrt{1-\delta_{2s}}\|\bm{x}_{k+1}-\bm{x}^{\natural}\|_2\leq\|\bm{A}\bm{x}_{k+1}-\bm{A}\bm{x}^{\natural}\|_2\leq \|\bm{A}\bm{x}^{\natural}-\bm{A}\bm{x}^{\natural}\|_2=0.
$$
So we have $\bm{x}_{k+1}=\bm{x}^{\natural}$ for all $k\geq\max\{k_1,k_2\}$.

It remains to estimate $k_1$ and $k_2$.
\begin{itemize}
\item For $k_1$: The lower bound of $k_1$ is obtained straightforwardly from \eqref{eq:k1} as $k_1>\frac{\log(\lambda_0\|\bm{x}^{\natural}\|_2/|x_{\min}^{\natural}|)}{\log(\alpha^{-1})}$. Therefore,
$$
k_1=\left\lfloor\frac{\log(\lambda_0\|\bm{x}^{\natural}\|_2/|x_{\min}^{\natural}|)}{\log(\alpha^{-1})}\right\rfloor+1
\leq C_2\log\frac{\|\bm{x}^{\natural}\|_2}{|x_{\min}^{\natural}|}+C_3,
$$
where $\lfloor\cdot\rfloor$ is the floor operation.
\item For $k_2$: Obviously,
$$
k_2=\left\lfloor\frac{\log\left(\frac12\lambda_0\sqrt{C_{\lambda_0}\left(1+\delta_{s}\right)}\lV\bm{x}^{\natural}\rV_2^2/|y_{\min}|^2\right)}{\log(\alpha^{-1})}\right\rfloor+1.
$$
Therefore, to upper bound $k_2$, it suffices to lower bound $|y_{\min}|$. Since $\{\bm{a}_i\}_{i=1}^m$ are independent random Gaussian vectors and $\bm{x}^{\natural}$ is fixed, $\{\bm{a}_i^T\bm{x}^{\natural}\}_{i=1}^m$ are independent random Gaussian variable with mean $0$ and variance $\|\bm{x}^{\natural}\|_2^2$. Let $\epsilon>0$ be a fixed constant. Then, for any $i=1,\ldots,m$,
\begin{equation*}
\begin{split}
\mathrm{Prob}\left\{|\bm{a}_i^T\bm{x}^{\natural}|\geq\epsilon\right\}&=\frac{2}{\|\bm{x}^{\natural}\|_2\sqrt{2\pi}}\int_{\epsilon}^{+\infty}e^{-\frac{t^2}{2\|\bm{x}^{\natural}\|_2^2}}dt
=1-\frac{2}{\|\bm{x}^{\natural}\|\sqrt{2\pi}}\int_{0}^{\epsilon}e^{-\frac{t^2}{2\|\bm{x}^{\natural}\|_2^2}}dt\cr
&\geq 1-\frac{2}{\|\bm{x}^{\natural}\|\sqrt{2\pi}}\cdot e^{-\frac{0}{2\|\bm{x}^{\natural}\|_2^2}}\cdot\epsilon
= 1-\sqrt{\frac{2}{\pi}}\frac{\epsilon}{\|\bm{x}^{\natural}\|_2}.
\end{split}
\end{equation*}
Due to the independency of $\{\bm{a}_i^T\bm{x}^{\natural}\}_{i=1}^m$, we obtain
$$
\mathrm{Prob}\left\{|\bm{a}_i^T\bm{x}^{\natural}|\geq\epsilon,\quad\forall~i=1,\ldots,m\right\}\geq \left(1-\sqrt{\frac{2}{\pi}}\frac{\epsilon}{\|\bm{x}^{\natural}\|_2}\right)^m
\geq 1-\sqrt{\frac{2}{\pi}}\frac{m\epsilon}{\|\bm{x}^{\natural}\|_2}.
$$
Therefore, if we choose $\epsilon=m^{-\beta}\|\bm{x}^{\natural}\|_2\sqrt{\frac{\pi}{2}}$, then with probability at least $1-m^{1-\beta}$ we have $y_{\min}=\min_{1\leq i\leq m}\frac{1}{\sqrt{m}}|\bm{a}_i^T\bm{x}^{\natural}|\geq m^{-\beta-\frac{1}{2}}\|\bm{x}^{\natural}\|_2\sqrt{\frac{\pi}{2}}$, and thus
$$
k_2\leq\left\lfloor\frac{\log\left(\frac{1}{\pi}m^{2\beta+1}\lambda_0\sqrt{C_{\lambda_0}\left(1+\delta_{s}\right)}\right)}{\log(\alpha^{-1})}\right\rfloor+1
\leq C_2\beta\log m+C_3.
$$
\end{itemize}
\end{proof}
\subsection{Proof of Corollary \ref{localconvergence:noisy}}\label{subsec:proofnoisy}
\begin{proof}[Proof of Corollary \ref{localconvergence:noisy}]
In the noisy case, $\bm{y}=\bm{y}^{(\varepsilon)}:=|\bm{A}\bm{x}^{\natural}|+\bm{\varepsilon}$ for some $\bm{\varepsilon}\in \mathbb{R}^m$. The proof is done under the event \eqref{eq:RIP} with $r=s,2s,3s$ and the event \eqref{event:lemma1}.  Again, we let the sequence $\{\bm{x}_k,\bm{y}_k\}_{k\ge1 }$ be generated by HTP from the noisy data. In this case,  $\{\bm{y}_k\}_{k\ge1 }$ is given by
 $$\bm{y}_{k+1}=\bm{y}^{(\varepsilon)} \odot \mathrm{sgn}{\left(\bm{\bm{A}}\bm{x}_{k}\right)}=\left(|\bm{A}\bm{x}^{\natural}|+\bm{\varepsilon}\right) \odot \mathrm{sgn}{\left(\bm{\bm{A}}\bm{x}_{k}\right)}.$$
Then, using the same argument to the proof of the inequality in Lemma~\ref{bound:Ax-y}, we have
\begin{align}\label{noisy:Ax-y}
\begin{split}
\lV \bm{A}^{T}_{\sS_{k+1}}\left(\bm{y}_{k+1}-\bm{A}\bm{x}^{\natural}\right)\rV_2&=\lV \bm{A}^{T}_{\sS_{k+1}}\left(\left(|\bm{A}\bm{x}^{\natural}|+\bm{\varepsilon}\right) \odot \mathrm{sgn}{\left(\bm{\bm{A}}\bm{x}_{k}\right)} -\bm{A}\bm{x}^{\natural}\right)\rV_2\\
&\le \lV \bm{A}^{T}_{\sS_{k+1}}\left(|\bm{A}\bm{x}^{\natural}| \odot \mathrm{sgn}{\left(\bm{\bm{A}}\bm{x}_{k}\right)} -\bm{A}\bm{x}^{\natural}\right)\rV_2+\lV \bm{A}^{T}_{\sS_{k+1}}\left(\bm{\varepsilon} \odot \mathrm{sgn}{\left(\bm{\bm{A}}\bm{x}_{k}\right)}\right) \rV_2\\
&\le \sqrt{C_{\lambda_0}\left(1+\delta_s\right)}\lV \bm{x}_{k}-\bm{x}^{\natural}\rV_2+\sqrt{1+\delta_s}\lV \bm{\varepsilon}\rV_2.
\end{split}
\end{align}
where the last inequality follows from Lemma~\ref{bound:Ax-y} and \eqref{RIP:a} in Proposition \ref{RIP}.

Then, we modify Lemma~\ref{contraction:u0} to the noisy case. All arguments in Lemma~\ref{contraction:u0} go through except that the estimation of $I_3$ in \eqref{term2:vd-xd} should be replaced by \eqref{noisy:Ax-y}. Thus we obtain
\begin{equation}\label{eq:contraction111}
\lV \bm{u}_{k+1}-\bm{x}^{\natural}\rV_2\le \rho\lV \bm{x}_{k}-\bm{x}^{\natural}\rV_2+2\sqrt{1+\delta_s}\lV \bm{\varepsilon}\rV_2.
\end{equation}
where $\bm{u}_{k+1}, \rho$ are the same as those in Lemma~\ref{contraction:u0}.

The rest of the proof is a modification the proof of Part(a) of Theorem \ref{localconvergence}. Notice that the proof of Part (a) of Theorem \ref{localconvergence}  keeps unchanged until \eqref{eq:erronSk+1c}. Since $[\bm{x}^{\natural}]_{\sS_{\natural}\setminus\sS_{k+1}}$ is a subvector of $\bm{x}^{\natural}-\bm{u}_{k+1}$, by applying \eqref{eq:contraction111}, inequality \eqref{eq:erronSk+1c} in the noisy case becomes
\begin{equation*}
\lV [\bm{x}_{k+1}]_{\sS_{k+1}^c}-[\bm{x}^{\natural}]_{\sS_{k+1}^c}\rV_2=\lV \bm{x}^{\natural}_{S_{k+1}^c} \rV_2\le \lV [\bm{x}^{\natural}]_{\sS_{\natural}\setminus\sS_{k+1}} \rV_2\le \rho\lV \bm{x}_{k}-\bm{x}^{\natural}\rV_2+2\sqrt{1+\delta_s}\lV \bm{\varepsilon}\rV_2.
\end{equation*}
Moreover, with the help of \eqref{eq:est1inproofa} and \eqref{noisy:Ax-y}, inequality \eqref{eq:erronSk+1} is revised as
\begin{align*}
\lV [\bm{x}_{k+1}]_{\sS_{k+1}}-[\bm{x}^{\natural}]_{\sS_{k+1}}\rV_2
&\le \frac{1}{1-\delta_s}\lV\bm{A}^{T}_{\sS_{k+1}}\left(\bm{y}_{k+1}-\bm{A}\bm{x}^{\natural}\right)\rV_2 +\frac{\delta_{2s}}{1-\delta_s}\lV[\bm{x}^{\natural}]_{\sS_{\natural}\setminus\sS_{k+1}} \rV_2\\
&\le \frac{\sqrt{C_{\lambda_0}\left(1+\delta_s\right)}+\delta_{2s}\rho}{1-\delta_s}\lV \bm{x}_{k}-\bm{x}^{\natural}\rV_2+\frac{(1+2\delta_{2s})\sqrt{1+\delta_s}}{1-\delta_s}\lV \bm{\varepsilon}\rV_2.
\end{align*}
Putting the above two inequalities together and by $\sqrt{a^2+b^2}\le |a|+|b|$, we then have
\begin{align*}
\lV \bm{x}_{k+1}-\bm{x}^{\natural}\rV_2&= \sqrt{\lV [\bm{x}_{k+1}]_{\sS_{k+1}^c}-[\bm{x}^{\natural}]_{\sS_{k+1}^c}\rV_2^2+\lV [\bm{x}_{k+1}]_{\sS_{k+1}}-[\bm{x}^{\natural}]_{\sS_{k+1}}\rV_2^2}\\
&\le \underbrace{\left(\rho+\frac{\sqrt{C_{\lambda_0}\left(1+\delta_s\right)}+\delta_{2s}\rho}{1-\delta_s}\right)}_{\alpha_1}\lV \bm{x}_{k}-\bm{x}^{\natural}\rV_2+\underbrace{\left(\frac{1+2\delta_{2s}}{1-\delta_s}+2\right)\sqrt{1+\delta_s}}_{d}\lV \bm{\varepsilon}\rV_2.
\end{align*}
Therefore, as long as $\delta_{3s}$, $\lambda_0$ is sufficiently small, we can have $\alpha_1<1$. For example, for the fixed $\varepsilon_0=10^{-3}$, let $\delta_{3s}\le 0.05$, and we set $\mu=0.95$, then if provided $\lambda_0\le \frac{1}{12}$, we have $\alpha_1\in (0,1)$. Also, for $\delta_{3s}\le 0.05$, we have $d\le 3.24$.
\end{proof}

\section{Conclusion}\label{section:conclusion}

We have proposed a second-order method named HTP for sparse phase retrieval problem, which is inspired by the hard thresholding pursuit method introduced for compressed sensing. Theoretical analysis illustrates the finite step convergence of the proposed algorithm, which has also been confirmed by numerical experiments. Moreover, numerical experiments also show that our algorithm outperforms the comparative algorithms such as ThWF, SPARTA, CoPRAM significantly in terms of CPU time --- our HTP algorithm can be several times faster than others. Furthermore, there are many other efficient algorithms in compressed sensing that are also convergent in finite steps. It is interesting to investigate such algorithms for sparse phase retrieval.


The results of this paper can be extended to the case when the Gaussian matrix $A$ and underlying signal are complex. However, similar to the mentioned algorithms above, the proposed HTP for sparse phase retrieval is only applicable for standard Gaussian model, due to the fact that the random Gaussian sensing matrix is essential for the success of support recovery at initialization stage. It should be more challenging to design efficient algorithms for the models based on complex Fourier sampling, which shall be the future work.

\section*{Acknowledgment}
The authors would like to thank the anonymous referees for their constructive comments, which have led to an improvement in the quality of this paper. The work of J.-F. Cai is partially supported by Hong Kong Research Grants Council (HKRGC) GRF grants No.~16309219 and No.~16310620. J.-Z. Li is partially supported by the National Science Foundation of China No.~11971221, Guangdong NSF Major Fund No.~2021ZDZX1001, the Shenzhen Sci-Tech Fund No.~RCJC20200714114556020, JCYJ20200109115422828 and JCYJ20190809150413261, and Guangdong Provincial Key Laboratory of Computational Science and Material Design No.~2019B030301001. X.-L. Lu is partially supported by the National Key Research and Development Program of China No.~2020YFA0714200, the National Science Foundation of China No.~11871385 and the Natural Science Foundation of Hubei Province No.~2019CFA007.

\bibliographystyle{abbrv}
\bibliography{phaseretrieval}
\end{document}